\documentclass[a4paper,11pt,openright,twoside]{book}
\usepackage[english]{babel}
\usepackage[utf8]{inputenc}
\usepackage[T1]{fontenc}
\usepackage{lmodern}
\usepackage{hyperref}
\usepackage{latexsym}
\usepackage{amsthm}
\usepackage{a4}
\usepackage{amsfonts}
\usepackage{bm}
\usepackage{amssymb}
\usepackage{amsmath}
\usepackage{color}
\usepackage{eucal}
\usepackage{amscd}
\usepackage{amsmath,mathtools}
\usepackage{array}
\usepackage{float}
\usepackage{setspace}
\usepackage{listings}
\usepackage{color} 
\usepackage{graphicx}
\usepackage{xifthen}
\usepackage{stmaryrd}
\usepackage{linguex}
\usepackage{textcomp}
\usepackage[backend=biber]{biblatex}
\usepackage{csquotes}
\usepackage{subfig}
\addbibresource{Bibliografia.bib}
\allowdisplaybreaks

\newcommand{\bigtau}{\scalebox{1.44}{$\tau \hspace{-0.06cm}$}}
\newcommand{\dlgraffa}{\{ \hspace{-0.1cm} \{}
\newcommand{\drgraffa}{\} \hspace{-0.1cm} \}}
\newcommand{\vertiii}[1]{{\left\vert\kern-0.25ex\left\vert\kern-0.25ex\left\vert #1 \right\vert\kern-0.25ex\right\vert\kern-0.25ex\right\vert}}


\newboolean{english}
\setboolean{english}{true} 
\newboolean{corelatore}
\setboolean{corelatore}{false} 

\title{Titolo}
\author{Armando Maria Monforte}

\begin{document}
\theoremstyle{plain} 
\newtheorem{thm}{Theorem}[chapter]
\newtheorem{lem}[thm]{Lemma}
\newtheorem{corol}[thm]{Corollary}
\newtheorem{prop}[thm]{Proposition}

\theoremstyle{definition}
\newtheorem{defin}[thm]{Definition} 

\theoremstyle{remark}
\newtheorem{rem}[thm]{Remark}
\newtheorem{example}[thm]{Example}

\thispagestyle{empty}
\space
\begin{center}
\textsc{\Large{Università degli studi di Pavia}\\
\normalsize{Dipartimento di Matematica\\
Corso di Laurea Magistrale in Matematica}}
\end{center}
\[\]
\begin{center}
	\includegraphics[width=0.35\textwidth]{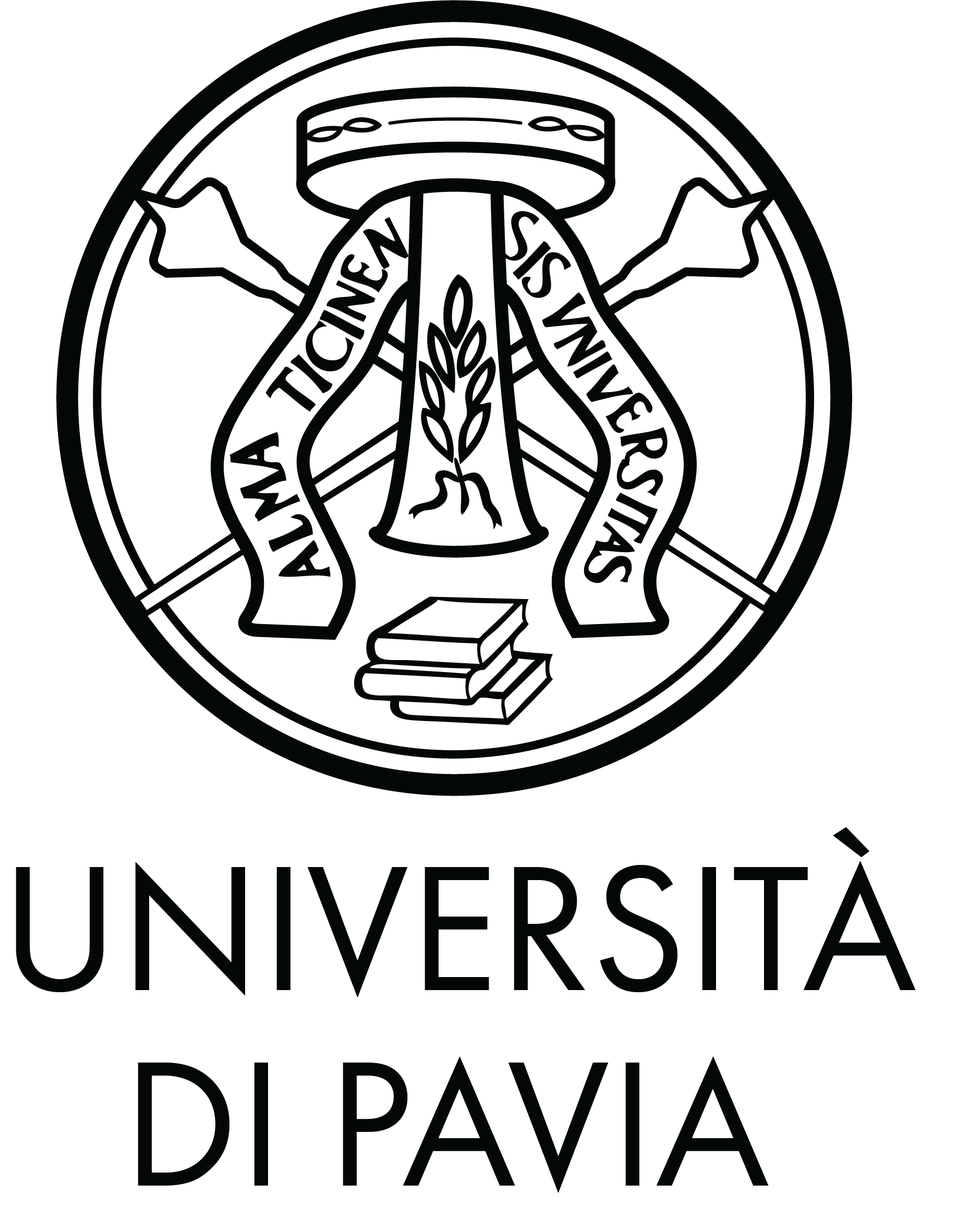}
\end{center}
\[\]
\space
\begin{center}
\ifthenelse{\boolean{english}} 
{
\smallskip

\textbf{\Large{PLANE WAVE DISCONTINUOUS GALERKIN METHODS FOR SCATTERING BY PERIODIC STRUCTURES}}
}
{}
\end{center}
\singlespace
\[\]
\begin{center}
\textbf{Tesi di Laurea Magistrale in Matematica}
\end{center}
\[\]\[\]\[\]
\begin{flushleft}
\ifthenelse{\boolean{english}}{Relatore (Supervisor):\\}{Relatore:\\}
\textbf{Prof. Andrea Moiola}
\ifthenelse{\boolean{corelatore}}{
\ifthenelse{\boolean{english}}{\\Correlatore (Co-Supervisor):\\}{\\Correlatore:\\}
\textbf{Nome Prof}
}
{}
\end{flushleft}
\[\]\[\]
\begin{flushright}
Tesi di Laurea di: \\
\textbf{Armando Maria Monforte}\\
Matricola 507047
\end{flushright}
\[\]\[\]\[\]
\begin{center}
Anno Accademico 2022-2023
\end{center}

\newpage\null\thispagestyle{empty}\newpage

\ifthenelse{\boolean{english}}
{\chapter*{Abstract}}{\chapter*{Riassunto}}
This thesis explores the application of Plane Wave Discontinuous Galerkin (PWDG) methods for the numerical simulation of electromagnetic scattering by periodic structures. Periodic structures play a pivotal role in various engineering and scientific applications, including antenna design, metamaterial characterization, and photonic crystal analysis. Understanding and accurately predicting the scattering behavior of electromagnetic waves from such structures is crucial in optimizing their performance and advancing technological advancements.

The thesis commences with an overview of the theoretical foundations of electromagnetic scattering by periodic structures. This theoretical dissertation serves as the basis for formulating the PWDG method within the context of wave equation. The DtN operator is presented and it is used to derive a suitable boundary condition.

The numerical implementation of PWDG methods is discussed in detail, emphasizing key aspects such as basis function selection and boundary conditions. The algorithm's efficiency is assessed through numerical experiments.

We then present the DtN-PWDG method, which is discussed in detail and is used to derive numerical solutions of the scattering problem. A comparison with the finite element method (FEM) is presented.

In conclusion, this thesis demonstrates that PWDG methods are a powerful tool for simulating electromagnetic scattering by periodic structures.

\chapter*{Riassunto}
Questa tesi esplora l'applicazione dei metodi Plane Wave Discontinuous Galerkin (PWDG) per la simulazione numerica dello scattering elettromagnetico da parte di strutture periodiche. Le strutture periodiche svolgono un ruolo fondamentale in varie applicazioni scientifiche e ingegneristiche, tra cui la progettazione di antenne, la caratterizzazione dei metamateriali e l'analisi dei cristalli fotonici. Comprendere e prevedere con precisione il comportamento di diffusione delle onde elettromagnetiche provenienti da tali strutture è fondamentale per ottimizzarne le prestazioni e far avanzare i progressi tecnologici.

La tesi inizia con una panoramica dei fondamenti teorici della diffusione elettromagnetica da parte di strutture periodiche. Questa analisi teorica serve come base per la formulazione del metodo PWDG nel contesto dell'equazione delle onde. Viene presentato l'operatore DtN che viene utilizzato per derivare una adeguata condizione al bordo.

L'implementazione numerica dei metodi PWDG viene discussa nel dettaglio, sottolineando aspetti chiave come la selezione delle funzioni di base e le condizioni al bordo. L'efficienza  dell'algoritmo viene valutata attraverso esperimenti numerici.

Presentiamo poi il metodo DtN-PWDG, che viene discusso in dettaglio e viene utilizzato per derivare soluzioni numeriche del problema di scattering. Vengono presentati confronti con il metodo degli elementi finiti (FEM).

In conclusione, questa tesi dimostra che i metodi PWDG sono un potente strumento per simulare lo scattering elettromagnetico su strutture periodiche.

\doublespacing
\tableofcontents
\singlespacing

\chapter{Introduction}
The study of electromagnetic scattering by periodic structures has been an area of significant interest in the field of electromagnetics and computational physics for many decades. As technology continues to advance, the demand for accurate and efficient numerical methods to analyze scattering phenomena from periodic structures becomes increasingly important. In this context, the Trefftz Discontinuous-Galerkin (TDG) method emerges as a promising approach to tackle the challenges posed by these complex problems.\\

This thesis explores the application and development of the Trefftz-DG methods for solving scattering problems involving periodic structures. The motivation behind this work arises from the desire to overcome the limitations of traditional numerical techniques, such as finite element methods, finite difference methods, and integral equation methods, when dealing with problems featuring periodicity.\\

The primary objective of this research is to present a comprehensive study of the TDG method's theoretical foundation and its practical implementation for scattering analysis. By leveraging the power of Trefftz functions, which are solutions to the governing equations of the problem, the TDG method provides a unique advantage in efficiently capturing the behavior of waves in periodic media.\\

The thesis starts with a review of the relevant literature on scattering problems, periodic surfaces, Trefftz methods, and discontinuous Galerkin methods. The mathematical formulation of the scattering problem is presented, including the governing equations, boundary conditions, and the discretization strategy based on TDG. The periodicity of the scattering surface leads to quasi-periodic solutions. For this reason, quasi-periodic boundary conditions are applied, together with a radiation condition on part of the boundary. \\

Next, we develop and implement numerical algorithms for solving the scattering problem: we implement the PWDG method using suitable finite element discretization techniques, incorporating quasi-periodic boundary conditions and appropriate basis functions. Special attention is given to the treatment of quasi-periodic boundary conditions and the construction of the basis functions using appropriate solutions of the governing equations.\\

To assess the performance and accuracy of the proposed TDG method, a series of numerical experiments are conducted. These experiments in two space dimensions involve various scattering scenarios by periodic structures, including different incident wave angles, frequencies, and material properties. \\

Finally, the conclusion summarizes the findings of this thesis and highlights the significance of the Trefftz Discontinuous-Galerkin method in the context of scattering by periodic structures. It also points out potential avenues for future research and developments in this area.\\

\chapter{Scattering by Periodic Structures} \label{chap2}
Scattering phenomena play a fundamental role in a wide range of scientific and engineering disciplines, including optics, acoustics, electromagnetics, and solid-state physics. One intriguing aspect of scattering is wave interaction with periodic structures, where the arrangement of constituents repeats periodically in space. Such periodic structures can be found in various natural and engineered systems, ranging from crystalline materials to photonic crystals, metamaterials, and diffraction gratings.

The study of scattering by periodic structures is crucial for understanding and manipulating wave propagation in materials and devices. It finds applications in diverse fields, including the design of optical filters, waveguides, sensors, and photonic devices with tailored properties. Moreover, scattering by periodic structures is not limited to electromagnetic waves but also extends to other types of waves, such as acoustic waves and matter waves, enabling advancements in fields like ultrasound imaging, sonic crystals, and atom optics.\\
Due to its multiple applications in engineering and technology, considerable attention has been devoted to the mathematical modeling and computational simulation of acoustic and electromagnetic wave diffraction by periodic or bi-periodic structures in unbounded domains.

\section{Functional Space Framework}
\subsection{General Notation}
First, we will briefly describe the functional spaces of interest for our problem, following \cite{Pinto}. We want to define the Quasi-periodic Sobolev space, which will be used to prove some theoretical results.\\

Let $B$ be a Banach space. We denote its norm as $\| \cdot \|_{B}$ and its dual space by $B'$, where we consider elements of $B$ as antilinear rather than linear forms over $B$. If $B$ is Hilbert, we shall denote the inner product between two elements $x,y \in B$ as $(x, y)_B$. For the special case $B = \mathbb{R}^2$ or $\mathbb{C}^2$, we write the inner product between $x, y \in B$ as $x \cdot \overline{y}$.\\
For $d = 1, 2$, let $\Omega \subset \mathbb{R}^d$ be an open domain and denote its boundary $\partial \Omega$.\\
We denote the set of continuous scalar functions in $\Omega$ with complex values as $\mathit{C}(\Omega)$ and define, for $n \in \mathbb{N}_0 $, the following spaces of continuous functions:
\begin{align*}
    &\mathit{C}^n(\Omega) := \{u \in \mathit{C}(\Omega) \hspace{0.15cm} | \hspace{0.15cm} \partial^\beta u \in \mathit{C}(\Omega), \hspace{0.15cm}\forall \beta \in \mathbb{N}^d, \hspace{0.15cm}\text{with}\hspace{0.15cm} |\beta| \leq n \}, \\
    &\mathit{C}^\infty(\Omega) := \{u \in \mathit{C}(\Omega) \hspace{0.15cm} | \hspace{0.15cm} \partial^\beta u \in \mathit{C}(\Omega), \hspace{0.15cm}\forall \beta \in \mathbb{N}^d \}, \\
    &\mathit{D}(\Omega) := \{u \in \mathit{C}^\infty(\Omega) \hspace{0.15cm} | \hspace{0.15cm} \text{supp}\hspace{0.05cm} u \subset \subset \Omega \},    
\end{align*}
where the multi-index $\beta = (\beta_1, \beta_2) \in \mathbb{N}^2$, with $|\beta| = \beta_1 + \beta_2$.\\

We say that a one-dimensional curve $\Gamma$ is of class $\mathit{C}^{r,1}$, for $r \in \mathbb{N}_0$, if it may be parametrized by a continuous function $z : (0, 2\pi) \to \Gamma$ so that $z$ has continuous derivatives up to order $r$ and its derivatives of order $r$ are Lipschitz continuous.\\

The space of antilinear distributions on $\Omega$ is referred to as $\mathit{D}'(\Omega)$, and its duality pairing with $\mathit{D}(\Omega)$ is written as
\begin{equation*}
    f(u) = \langle f \; , \; u \rangle_\Omega,
\end{equation*}
for any $f\in\mathit{D}'(\Omega)$  and $u\in \mathit{D}(\Omega)$.\\

For $s \geq 0$ and $p \geq 1$, $W^{s,p}(\Omega)$ denotes standard Sobolev spaces on $\Omega$. For $s \in \mathbb{R}$, we also introduce the spaces $H^s(\Omega)$ and $H^s_0(\Omega)$ as the usual Sobolev spaces of order $s$.

\subsection{Quasi-periodic Functions}
We now introduce the concept of quasi-periodic functions, which are fundamental for our problem formulation. 
\begin{defin}[Quasi-periodic function]
    A function $u:\mathbb{R}^2 \to \mathbb{C}$ is called quasi-periodic of period $T$ with parameter $\alpha>0$ if
    \begin{equation*}
        u(x_1+T,x_2)=e^{i\alpha T}u(x_1,x_2), \hspace{0.15cm} \forall \hspace{0.05cm} \mathbf{x} = (x_1,x_2) \in \mathbb{R}^2.
    \end{equation*}
    For any $n \in \mathbb{Z}$ and for all $\mathbf{x} \in \mathbb{R}^2$, it holds
    \begin{equation*}
        u(x_1+nT,x_2)=e^{in\alpha T}u(x_1,x_2), \hspace{0.15cm} \forall \hspace{0.05cm} \mathbf{x} = (x_1,x_2) \in \mathbb{R}^2.
    \end{equation*}
    This is equivalent to say that the function $x_1 \mapsto e^{-i\alpha x_1}u(x_1,x_2)$ is $T$-periodic for every $x_2$.
\end{defin} 
We define $\mathit{Q}_{\alpha,T}(\mathbb{R}^2)$ as the set of $\mathit{C}^\infty(\mathbb{R}^2)$-functions vanishing for large $|x_2|$ and that are $\alpha$-quasi-periodic with period $T$ in the first Cartesian component.\\\\
In order to properly define quasi-periodic distributions, we introduce the translation operator
\begin{defin}[Translation operator]
    Let $s\in\mathbb{R}$, $j\in\mathbb{N}$. We define the translation operator:
    \begin{equation*}
        \tau_s := \mathit{D}(\mathbb{R}^2) \to \mathit{D}(\mathbb{R}^2) : \psi(\mathbf{x}) \mapsto \psi(\mathbf{x}+s\mathbf{e}_1),
    \end{equation*}
    where $\{ \mathbf{e}_1, \mathbf{e}_2 \}$ is the canonical orthonormal basis of $\mathbb{R}^2$.\\
    It easily follows that $\tau_{s_1} \circ \tau_{s_2} = \tau_{s_1+s_2}$.
\end{defin} 
We say that $f \in L^1_{\text{loc}}(\mathbb{R}^2)$ is $\alpha$-quasi-periodic with period $T$ if, for every $\psi \in \mathit{D}(\mathbb{R}^2)$ and for all $n \in \mathbb{Z}$, 
\begin{equation*}
    \int_{\mathbb{R}^2} f(\mathbf{x}) \overline{\tau_{nT}\psi(\mathbf{x})} \hspace{0.05cm} d\mathbf{x} = \int_{\mathbb{R}^2} f(\mathbf{x}-nT\mathbf{e}_1) \overline{\psi(\mathbf{x})} \hspace{0.05cm} d\mathbf{x} = e^{-in\alpha T} \int_{\mathbb{R}^2} f(\mathbf{x}) \overline{\psi(\mathbf{x})} \hspace{0.05cm} d\mathbf{x}.
\end{equation*}
\begin{defin}[Quasi-periodic distribution]
    We say that a distribution $f \in \mathit{D}'(\mathbb{R}^2)$ is quasi-periodic with shift $\alpha$ and period $T$ if, for all $n \in \mathbb{Z}$ and $\psi \in \mathit{D}(\mathbb{R}^2)$, there holds
    \begin{equation*}
        \langle f, \tau_{nT}\psi \rangle_{\mathbb{R}^2} = e^{-in\alpha T} \langle f, \psi \rangle_{\mathbb{R}^2}.
    \end{equation*}
\end{defin} \noindent
We write $\mathit{Q}'_{\alpha,T}(\mathbb{R}^2)$ for the space of quasi-periodic distributions with shift $\alpha$ and period $T$. Furthermore, one can check that $\mathit{Q}'_{\alpha,T}(\mathbb{R}^2)$ corresponds to the space of antilinear maps from $\mathit{Q}_{\alpha,T}(\mathbb{R}^2)$ to the complex field. \\

From now on, we will restrict ourselves to quasi-periodic functions and distributions with period $2\pi$ in $x_1$, which we simply refer to as $\alpha$-quasi-periodic. Thus, we employ the equivalences $\mathit{Q}_{\alpha}(\mathbb{R}^2) \equiv \mathit{Q}_{\alpha,2\pi}(\mathbb{R}^2)$ and $\langle\cdot,\cdot\rangle_{\alpha}\equiv \langle\cdot,\cdot\rangle_{\alpha,2\pi} $. Furthermore, it is clear that elements of $\mathit{Q}_{\alpha}(\mathbb{R}^2)$ and $\mathit{Q}'_{\alpha}(\mathbb{R}^2)$ may formally be written as Fourier series.
\begin{prop}[Fourier expansion] \label{fourier}
    Let $\alpha_n = \alpha + n$ for all $n \in \mathbb{Z}$. Every $u \in \mathit{Q}_{\alpha}(\mathbb{R}^2)$ and $F\in\mathit{Q}'_{\alpha}(\mathbb{R}^2)$ may be represented as a Fourier series, i.e.
    \begin{align*}
        u(x_1, x_2) = \sum_{n \in \mathbb{Z}} u_n(x_2) e^{i \alpha_n x_1}, \\
        F(x_1,x_2) = \sum_{n \in \mathbb{Z}} F_n(x_2) e^{i \alpha_n x_1},
    \end{align*}
    where the coefficients $u_n$ and $F_n$ belong respectively to $\mathit{D}(\mathbb{R})$ and $\mathit{D}'(\mathbb{R})$ .
\end{prop}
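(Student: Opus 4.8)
The plan is to strip off the quasi-periodic twist and reduce everything to ordinary Fourier analysis in the first variable, treating $x_2$ as a passive parameter. For $u \in \mathit{Q}_{\alpha}(\mathbb{R}^2)$ I put $v(\mathbf{x}) := e^{-i\alpha x_1} u(\mathbf{x})$; by the reformulation of quasi-periodicity recorded right after the definition, $v$ is $2\pi$-periodic in $x_1$ (with $x_2$ frozen). For $F \in \mathit{Q}'_{\alpha}(\mathbb{R}^2)$ the same twist turns $F$ into a $2\pi$-periodic distribution, and the expansion will be obtained by duality. In both cases, expanding the periodic object in an ordinary Fourier series and multiplying back by $e^{i\alpha x_1}$ produces the claimed expansion, with $u_n = v_n$, $F_n = G_n$, and the integer frequencies shifted to $\alpha_n = \alpha + n$.

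For the smooth case I set $u_n(x_2) := \frac{1}{2\pi}\int_0^{2\pi} u(x_1,x_2)\,e^{-i\alpha_n x_1}\,dx_1$, which are exactly the $x_1$-Fourier coefficients of $v$, i.e. $u_n(x_2) = \frac{1}{2\pi}\int_0^{2\pi} v(x_1,x_2)\,e^{-inx_1}\,dx_1$. Differentiation under the integral sign gives $u_n \in C^{\infty}(\mathbb{R})$, and since $u(x_1,\cdot)$ vanishes for $|x_2|$ large so does each $u_n$, whence $u_n \in \mathit{D}(\mathbb{R})$. Repeated integration by parts in $x_1$ (no boundary terms, by periodicity of $v$) yields, for every $j,k$, a bound $|\partial_{x_2}^{j}u_n(x_2)| \le C_{j,k}(1+|n|)^{-k}$ uniform in $x_2$; this rapid decay in the integer index gives convergence of $\sum_{n} u_n(x_2) e^{i\alpha_n x_1}$ to $u$ in the $C^{\infty}$ topology, settling the function case.

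For the distribution case the coefficients cannot be obtained by integration, so I extract them by duality. I fix $\rho \in \mathit{D}(\mathbb{R})$ with $\sum_{k \in \mathbb{Z}} \rho(x_1 - 2\pi k) \equiv 1$ and define, for $\phi \in \mathit{D}(\mathbb{R})$,
\[
  \langle F_n, \phi\rangle_{\mathbb{R}} := \frac{1}{2\pi}\,\big\langle F,\ \rho(x_1)\,e^{i\alpha_n x_1}\,\phi(x_2)\big\rangle_{\mathbb{R}^2},
\]
which is antilinear and continuous in $\phi$, hence an element of $\mathit{D}'(\mathbb{R})$. Independence of the choice of $\rho$ follows by writing a difference $\eta$ with vanishing $2\pi$-periodization as $\eta(x_1) = \mu(x_1) - \mu(x_1+2\pi)$ with $\mu \in \mathit{D}(\mathbb{R})$: then $\eta\, e^{i\alpha_n x_1}$ splits into a term and its $\tau_{2\pi}$-shift, and the relation $\langle F, \tau_{2\pi}\psi\rangle = e^{-2\pi i\alpha}\langle F, \psi\rangle$ together with the antilinear convention makes the two contributions cancel. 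To prove convergence I test $\sum_n F_n(x_2) e^{i\alpha_n x_1}$ against $\psi \in \mathit{D}(\mathbb{R}^2)$: setting $\hat\psi_n(x_2) := \int_{\mathbb{R}} \psi(x_1,x_2)\,e^{-i\alpha_n x_1}\,dx_1 \in \mathit{D}(\mathbb{R})$, the $n$-th mode contributes $\langle F_n, \hat\psi_n\rangle = \frac{1}{2\pi}\langle F, \rho\, e^{i\alpha_n x_1}\hat\psi_n\rangle$, and the Poisson summation identity $\frac{1}{2\pi}\sum_n e^{i\alpha_n x_1}\hat\psi_n(x_2) = \sum_k e^{-2\pi i\alpha k}\psi(x_1+2\pi k, x_2) =: P(\mathbf{x}) \in \mathit{Q}_{\alpha}(\mathbb{R}^2)$ collapses the series to $\langle F, \rho P\rangle$. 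Using the identity $\rho\,\tau_{2\pi k}\psi = \tau_{2\pi k}[\rho(\cdot - 2\pi k)\psi]$, the quasi-periodicity of $F$, and $\sum_k \rho(\cdot - 2\pi k)\equiv 1$, this reduces to $\langle F, \psi\rangle$, which is the desired identity.

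The main obstacle is the distributional convergence. There is no pointwise series to lean on, so the whole argument lives at the level of pairings with test functions, and two points must be handled with care: justifying the interchange of the infinite sum with the pairing, which I expect to follow from convergence of $\sum_n \rho\, e^{i\alpha_n x_1}\hat\psi_n$ in $\mathit{D}(\mathbb{R}^2)$ (the supports stay in one fixed compact set and $\hat\psi_n$ decays faster than any power of $|n|$ as $|n|\to\infty$); and using quasi-periodicity twice — once to prove $\rho$-independence of the coefficients and once to undo the periodization built by Poisson summation — while keeping track of the complex conjugation attached to every scalar factor in the antilinear pairing.
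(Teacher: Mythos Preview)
The paper does not actually prove this proposition: it is stated without proof (the surrounding section follows a cited reference), and only the integral formula for the smooth coefficients $u_n$ is recorded afterwards. Your argument, by contrast, is a complete and correct proof. The reduction to the $2\pi$-periodic case via the twist $v=e^{-i\alpha x_1}u$, together with differentiation under the integral and integration by parts, cleanly handles the smooth case and yields $u_n\in\mathit{D}(\mathbb{R})$ with rapid decay in $n$. For distributions, defining $F_n$ through a cutoff $\rho$ with unit $2\pi$-periodization, proving $\rho$-independence by writing a difference as $\mu-\tau_{2\pi}\mu$ and invoking quasi-periodicity, and then reassembling via Poisson summation is exactly the standard route; your bookkeeping of the antilinear pairing and of the phases $e^{-2\pi i\alpha k}$ is correct, and the interchange of sum and pairing is justified since the partial sums converge in $\mathit{D}(\mathbb{R}^2)$ (fixed compact support, rapid decay of $\hat\psi_n$). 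In short, you supply a full proof where the paper offers none.
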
 \noindent
The coefficients $u_n$ are defined as, for $n \in \mathbb{Z}$, as
\begin{equation} \label{fourier_coeff}
    u_n(x_2) = \frac{1}{2\pi} \int_0^{2 \pi} e^{-i\alpha_n x_1} u(x_1,x_2) \hspace{0.05cm} dx_1.
\end{equation}
To avoid redundancies, we limit the range of $\alpha$ to $[0, 1)$. Indeed, notice
that $\alpha$ and $\alpha + n$ define the same quasi-periodic space for any integer $n$.

\subsection{Quasi-periodic Sobolev Spaces}
Let $\mathcal{G}:= \{ \mathbf{x} \in \mathbb{R}^2 : 0 <x_1 <2\pi \}$. We define $\mathit{Q}_\alpha(\mathcal{G})$ as the set of restrictions to $\mathcal{G}$ of elements of $\mathit{Q}_\alpha(\mathbb{R}^2)$. For any open set $\mathcal{O} \subset \mathcal{G}$, we say that $u \in \mathit{Q}_\alpha(\mathcal{O})$ if $u = U_{|\mathcal{O}}$ for some $U\in \mathit{Q}_\alpha(\mathcal{G})$ such that $\overline{\text{supp}U}^{\mathcal{G}} \subset \mathcal{O}$. We define the restriction of $F\in\mathit{Q}'_\alpha(\mathbb{R}^2)$ to $\mathcal{O}$, denoted $F_{|\mathcal{O}}$, as an antilinear map acting on $u\in\mathit{Q}_\alpha(\mathcal{O})$ in the following way:
\begin{equation*}
    \langle F_{|\mathcal{O}} , u \rangle_{\mathcal{O},\alpha} := \langle F,U \rangle _{\alpha},
\end{equation*}
where $U$ is an extension of $u$ to $\mathcal{G}$. This can be easily extended to an element of $\mathit{Q}_\alpha(\mathbb{R}^2)$. Furthermore, we introduce the space of these restrictions as $\mathit{Q}'_\alpha(\mathcal{O})$.
\begin{defin}[Quasi-periodic Sobolev spaces] \label{sobolev}
    Let $s \in \mathbb{R}$ and $\alpha_n = \alpha +n$ for all $n \in \mathbb{Z}$; recall that $u_n$ denotes the $n$-th Fourier coefficient of $u$ as in Proposition \ref{fourier}. We introduce the quasi-periodic Bessel potential
    \begin{equation*}
        \mathcal{J}_\alpha^s(u) (\xi_1,\xi_2) := \sum_{n\in \mathbb{Z}} (1+\alpha_n^2+|\xi_2|^2)^{\frac{s}{2}} \hat{u}_n(\xi_2) e^{i\alpha_n \xi_1}, \hspace{0.3cm} \forall \xi_1,\xi_2 \in \mathcal{G},
    \end{equation*}
    where $\hat{u}_n(\xi_2)$ are the Fourier transforms of $u_n(x_2)$ in distributional sense. We define the quasi-periodic Sobolev space $H^s_\alpha(\mathcal{G})$ as
    \begin{equation*}
        H^s_\alpha(\mathcal{G}) := \biggl\{  u\in\mathit{Q}'_\alpha(\mathbb{R}^2) : u=\sum_{n\in\mathbb{Z}} u_ne^{i\alpha_nx_1}, u_n \in H^s(\mathbb{R}) \hspace{0.2cm} \text{and} \hspace{0.2cm} \mathcal{J}_\alpha^s(u) \in L^2_\alpha(\mathcal{G}) \biggl\},
    \end{equation*}
    with
    \begin{equation*}
        L^2_\alpha(\mathcal{G}) \equiv H^0_\alpha(\mathcal{G}) := \biggl\{ u\in\mathit{Q}'_\alpha(\mathbb{R}^2) : \sum_{n\in\mathbb{Z}} \| u_n \|^2_{L^2(\mathbb{R})} < \infty \biggl\}.
    \end{equation*}
\end{defin} \noindent
We shall identify elements of $H^s_\alpha(\mathcal{G})$ with their restrictions to $\mathcal{G}$.\\
$L^2_\alpha(\mathcal{G})$ is a Hilbert space with inner product given by
\begin{equation*}
    (u,v)_{L^2_\alpha(\mathcal{G})} := \sum_{n\in\mathbb{Z}} (u_n,v_n)_{L^2(\mathbb{R})}.
\end{equation*}
We consider the following norm for every $u$ in $H^s_\alpha(\mathcal{G})$:
\begin{equation*}
    \|u \|_{H^s_\alpha(\mathcal{G})} := \| \mathcal{J}_\alpha^s(u) \|_{L^2_\alpha(\mathcal{G})},
\end{equation*}
from where it follows directly that the quasi-periodic Bessel potential, as an operator, is an isometric isomorphism between $H^s_\alpha(\mathcal{G})$ and $L^2_\alpha(\mathcal{G})$. 
\begin{prop}
    Let $s \in \mathbb{R}$. Then, $\mathit{Q}_\alpha(\mathcal{G})$ is dense in $H^s_\alpha(\mathcal{G})$ and $H^s_\alpha(\mathcal{G})$ is a Hilbert space with inner product and norm respectively defined as
    \begin{equation*}
        (u,v)_{H^s_\alpha(\mathcal{G})} := (\mathcal{J}_\alpha^s(u),\mathcal{J}_\alpha^s(v))_{L^2_\alpha(\mathcal{G})}, \hspace{0.5cm} \| u \|_{H^s_\alpha(\mathcal{G})} := (u,u)^{1/2}_{H^s_\alpha(\mathcal{G})}.
    \end{equation*}
\end{prop}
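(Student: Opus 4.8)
The plan is to exploit the fact, already established in the excerpt, that the quasi-periodic Bessel potential $\mathcal{J}_\alpha^s : H^s_\alpha(\mathcal{G}) \to L^2_\alpha(\mathcal{G})$ is an isometric isomorphism, and to transport the Hilbert structure of $L^2_\alpha(\mathcal{G})$ back to $H^s_\alpha(\mathcal{G})$ through it. Since $L^2_\alpha(\mathcal{G})$ is already known to be a Hilbert space, nearly every algebraic and topological property we need will follow by pulling back along $\mathcal{J}_\alpha^s$, leaving only the density claim to be argued by hand.

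First I would check that $(u,v)_{H^s_\alpha(\mathcal{G})} := (\mathcal{J}_\alpha^s u, \mathcal{J}_\alpha^s v)_{L^2_\alpha(\mathcal{G})}$ is a genuine inner product. Sesquilinearity and conjugate symmetry are immediate from the linearity of $\mathcal{J}_\alpha^s$ together with the corresponding properties of $(\cdot,\cdot)_{L^2_\alpha(\mathcal{G})}$, while positive definiteness uses the injectivity of $\mathcal{J}_\alpha^s$: the identity $(u,u)_{H^s_\alpha(\mathcal{G})} = \|\mathcal{J}_\alpha^s u\|^2_{L^2_\alpha(\mathcal{G})}$ shows that $(u,u)_{H^s_\alpha(\mathcal{G})} = 0$ forces $\mathcal{J}_\alpha^s u = 0$ and hence $u = 0$; the same identity confirms that the induced norm coincides with the previously defined $\|u\|_{H^s_\alpha(\mathcal{G})} = \|\mathcal{J}_\alpha^s u\|_{L^2_\alpha(\mathcal{G})}$, so no inconsistency arises. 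For completeness I would take a Cauchy sequence $(u_k)_k$ in $H^s_\alpha(\mathcal{G})$; by the isometry $(\mathcal{J}_\alpha^s u_k)_k$ is Cauchy in the complete space $L^2_\alpha(\mathcal{G})$ and converges to some $w$, surjectivity of $\mathcal{J}_\alpha^s$ provides $u$ with $\mathcal{J}_\alpha^s u = w$, and the isometry yields $\|u_k - u\|_{H^s_\alpha(\mathcal{G})} \to 0$.

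For the density of $\mathit{Q}_\alpha(\mathcal{G})$ I would argue in two stages on the Fourier representation $u = \sum_{n \in \mathbb{Z}} u_n(x_2) e^{i\alpha_n x_1}$. Because $\|u\|^2_{H^s_\alpha(\mathcal{G})} = \sum_{n \in \mathbb{Z}} \|(1+\alpha_n^2+|\xi_2|^2)^{s/2}\hat{u}_n\|^2_{L^2(\mathbb{R})}$ decomposes as a convergent sum over $n$, the truncations $u^{(N)} := \sum_{|n| \leq N} u_n(x_2) e^{i\alpha_n x_1}$ converge to $u$ in $H^s_\alpha(\mathcal{G})$. For each fixed $n$ the weight $(1+\alpha_n^2+|\xi_2|^2)^{s/2}$ is equivalent, with $n$-dependent constants, to $(1+|\xi_2|^2)^{s/2}$, so the coefficient norm is equivalent to the standard $H^s(\mathbb{R})$ norm; using the density of $\mathit{D}(\mathbb{R})$ in $H^s(\mathbb{R})$ I can replace each $u_n$ with $|n| \leq N$ by some $\phi_n \in \mathit{D}(\mathbb{R})$ as close as desired. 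The finite sum $\sum_{|n| \leq N} \phi_n(x_2) e^{i\alpha_n x_1}$ is then smooth, vanishes for large $|x_2|$, and satisfies the quasi-periodicity relation since $e^{i\alpha_n(x_1+2\pi)} = e^{2\pi i \alpha} e^{i\alpha_n x_1}$ for every $n$; hence it lies in $\mathit{Q}_\alpha(\mathcal{G})$. A standard $\varepsilon/2$ splitting between the truncation tail and the coefficientwise error then produces an element of $\mathit{Q}_\alpha(\mathcal{G})$ within $\varepsilon$ of $u$.

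I expect the main obstacle to be the density step rather than the Hilbert-space part. One must verify carefully that the finite smooth truncations genuinely belong to $\mathit{Q}_\alpha(\mathcal{G})$, namely smoothness, compact support in $x_2$, and exact $\alpha$-quasi-periodicity, and that the coefficientwise approximation can be carried out in the correct weighted norm for all $s \in \mathbb{R}$, including negative $s$, where both the density of $\mathit{D}(\mathbb{R})$ in $H^s(\mathbb{R})$ and the weight equivalence must be invoked with some care.
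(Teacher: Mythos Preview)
The paper states this proposition without proof (it is quoted from the reference \cite{Pinto}), so there is no argument in the paper to compare against. Your proposal is correct and is the natural approach: the Hilbert-space structure is pulled back along the isometric isomorphism $\mathcal{J}_\alpha^s$, which the paper has already recorded, and density is obtained by truncating the Fourier series in $n$ and then approximating each remaining coefficient $u_n$ in $H^s(\mathbb{R})$ by an element of $\mathit{D}(\mathbb{R})$. Your observation that, for fixed $n$, the weight $(1+\alpha_n^2+|\xi_2|^2)^{s/2}$ is equivalent to $(1+|\xi_2|^2)^{s/2}$ is the right way to reduce the coefficient approximation to the classical density of $\mathit{D}(\mathbb{R})$ in $H^s(\mathbb{R})$, and the verification that the resulting finite trigonometric sum lies in $\mathit{Q}_\alpha(\mathcal{G})$ is straightforward as you indicate.
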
 \noindent
We can characterize the dual of $H^s_\alpha(\mathcal{G})$ by an isometric isomorphic space to $H^{-s}_\alpha(\mathcal{G})$ and we have that, for $s>0$, $H^s_\alpha(\mathcal{G}) \subseteq L^2_\alpha(\mathcal{G}) \subseteq H^{-s}_\alpha(\mathcal{G})$.
We now define the quasi-periodic Sobolev space on a subset $\mathcal{O} \subset \mathcal{G}$.
\begin{defin}
    Let $s \in \mathbb{R}$. For an open subset $\mathcal{O} \subset \mathcal{G}$, we define $H^s_\alpha(\mathcal{O})$ as the space of restrictions of elements of $H^s_\alpha(\mathcal{G})$ to $\mathcal{O}$, i.e.
    \begin{equation*}
        H^s_\alpha(\mathcal{O}) := \{ u \in \mathit{Q}'_\alpha(\mathcal{O}) : u = U_{|\mathcal{O}} \hspace{0.2cm} \text{and} \hspace{0.2cm} U \in H^s_\alpha(\mathcal{G}) \}.
    \end{equation*}
\end{defin} \noindent
It is possible to verify that $H^s_\alpha(\mathcal{O})$ is a Hilbert space.\\

Now we prove that the spaces defined as the closure of $\mathcal{Q}_\alpha(\mathcal{G})$ for positive $s\in\mathbb{Z}$, i.e. $\overline{\mathcal{Q}_\alpha(\mathcal{G})}^{\| \cdot \|_{H^s(\mathcal{G})}}$, are equivalent to the spaces $H^s_\alpha(\mathcal{G})$ in Definition \ref{sobolev}. For spaces of non-integer order, the result follows by direct application of interpolation theory.
\begin{thm}
    For any $0 \leq s < \infty$, the norms $\| \cdot \|_{H^s_\alpha(\mathcal{G})}$ and $\| \cdot \|_{H^s(\mathcal{G})}$ are equivalent in the space $H^s_\alpha(\mathcal{G})$. Hence, $H^s_\alpha(\mathcal{G})$ may be equivalent defined as $\overline{\mathcal{Q}_\alpha(\mathcal{G})}^{\| \cdot \|_{H^s(\mathcal{G})}}$.
\end{thm}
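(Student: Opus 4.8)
The plan is to reduce everything to an elementary two-sided inequality between the Fourier-side weights defining the two norms, first for integer $s$ and then to pass to general $s$ by interpolation. By the density Proposition it suffices to establish the two-sided estimate
\[
   c_s \, \|u\|_{H^s(\mathcal{G})} \le \|u\|_{H^s_\alpha(\mathcal{G})} \le C_s \, \|u\|_{H^s(\mathcal{G})}
\]
for every $u$ in the smooth dense subspace $\mathcal{Q}_\alpha(\mathcal{G})$, the general case following by completion. Fix $s=m\in\mathbb{N}_0$ and recall that $\|u\|_{H^m(\mathcal{G})}^2=\sum_{|\beta|\le m}\|\partial^\beta u\|_{L^2(\mathcal{G})}^2$. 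Writing $u=\sum_n u_n(x_2)e^{i\alpha_n x_1}$ and differentiating term by term gives $\partial_{x_1}^{\beta_1}\partial_{x_2}^{\beta_2}u=\sum_n (i\alpha_n)^{\beta_1}u_n^{(\beta_2)}(x_2)e^{i\alpha_n x_1}$. Since $\int_0^{2\pi}e^{i(\alpha_n-\alpha_m)x_1}\,dx_1=2\pi\,\delta_{nm}$ (because $\alpha_n-\alpha_m=n-m\in\mathbb{Z}$), integrating over $\mathcal{G}$ and applying Plancherel in the $x_2$ variable yields
\[
   \|u\|_{H^m(\mathcal{G})}^2 = 2\pi\sum_{n\in\mathbb{Z}}\int_{\mathbb{R}}\Bigl(\sum_{\beta_1+\beta_2\le m}\alpha_n^{2\beta_1}|\xi_2|^{2\beta_2}\Bigr)|\hat u_n(\xi_2)|^2\,d\xi_2 ,
\]
whereas by definition $\|u\|_{H^m_\alpha(\mathcal{G})}^2=\sum_n\int_{\mathbb{R}}(1+\alpha_n^2+|\xi_2|^2)^m|\hat u_n(\xi_2)|^2\,d\xi_2$. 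For $u\in\mathcal{Q}_\alpha(\mathcal{G})$ the rapid decay of $u_n$ in $n$ and the compact $x_2$-support make all interchanges of sum, derivative and integral legitimate.

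The problem then collapses to a pointwise comparison of the two weights. Setting $a=\alpha_n^2\ge0$ and $b=|\xi_2|^2\ge0$, the multinomial theorem gives
\[
   (1+a+b)^m=\sum_{\beta_1+\beta_2\le m}\frac{m!}{\beta_1!\,\beta_2!\,(m-\beta_1-\beta_2)!}\,a^{\beta_1}b^{\beta_2},
\]
whose coefficients all lie between $1$ and $3^m$; hence
\[
   3^{-m}(1+a+b)^m \le \sum_{\beta_1+\beta_2\le m}a^{\beta_1}b^{\beta_2} \le (1+a+b)^m .
\]
Inserting $a=\alpha_n^2$, $b=|\xi_2|^2$, multiplying by $|\hat u_n(\xi_2)|^2$, integrating and summing over $n$ turns this into $2\pi\,3^{-m}\|u\|_{H^m_\alpha(\mathcal{G})}^2\le\|u\|_{H^m(\mathcal{G})}^2\le 2\pi\,\|u\|_{H^m_\alpha(\mathcal{G})}^2$, the desired equivalence with constants independent of $u$, $n$ and $\xi_2$.

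Finally I would extend the result off the integers: since both families $\{H^s_\alpha(\mathcal{G})\}$ and $\{H^s(\mathcal{G})\}$ form interpolation scales and the norms are equivalent at the integer endpoints $m=\lfloor s\rfloor$ and $m+1$, the interpolation property propagates the equivalence to every intermediate $s\in(m,m+1)$. The equivalence of norms on the dense set $\mathcal{Q}_\alpha(\mathcal{G})$ then shows that the $\|\cdot\|_{H^s(\mathcal{G})}$-completion of $\mathcal{Q}_\alpha(\mathcal{G})$ coincides with $H^s_\alpha(\mathcal{G})$, giving the alternative characterization $H^s_\alpha(\mathcal{G})=\overline{\mathcal{Q}_\alpha(\mathcal{G})}^{\|\cdot\|_{H^s(\mathcal{G})}}$. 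The only genuinely delicate point is this interpolation step: one must ensure that both scales interpolate to the expected intermediate space and that equivalent endpoint norms yield equivalent interpolated norms, which is precisely where the standard Sobolev interpolation theory on the strip $\mathcal{G}$ is invoked.
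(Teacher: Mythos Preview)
Your argument is correct and follows essentially the same route as the paper: expand $u$ in the quasi-periodic Fourier series, use orthogonality in $x_1$ and Plancherel in $x_2$ to reduce the comparison of $\|\cdot\|_{H^m(\mathcal{G})}$ and $\|\cdot\|_{H^m_\alpha(\mathcal{G})}$ to a pointwise comparison of the weights $\sum_{\beta_1+\beta_2\le m}\alpha_n^{2\beta_1}\xi_2^{2\beta_2}$ and $(1+\alpha_n^2+\xi_2^2)^m$, conclude by density, and handle non-integer $s$ by interpolation. The only difference is cosmetic: you make the weight equivalence explicit via the multinomial theorem (with constants $3^{-m}$ and $1$), whereas the paper simply writes $\simeq$ at that step.
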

\begin{proof}
    Take $s\in\mathbb{N}_0$, then $H^s(\mathcal{G})=W^{s,2}(\mathcal{G})$. For $u\in\mathcal{Q}_\alpha(\mathcal{G})$, it holds
    \begin{align*}
        \| u \|^2_{H^s(\mathcal{G})} & = \sum_{|\beta| \leq s} \left\| D^{\beta}u \right\|^2_{L^2(\mathcal{G})} = \sum_{|\beta| \leq s} \left\| D^\beta \sum_{n\in \mathbb{Z}} u_n(x_2)e^{i\alpha_n x_1} \right\|^2_{L^2(\mathcal{G})} \\
        & = \sum_{\beta_1+\beta_2 \leq s} \left\| \sum_{n\in \mathbb{Z}} D^{\beta_2}u_n(x_2)(i\alpha_n)^{\beta_1}e^{i\alpha_n x_1} \right\|^2_{L^2(\mathcal{G})} \\
        & = \sum_{\beta_1+\beta_2 \leq s} \sum_{n\in \mathbb{Z}} (\alpha_n)^{2\beta_1} \left\| D^{\beta_2}u_n(x_2) \right\|^2_{L^2(\mathbb{R})} \\    
        & \simeq \sum_{n\in \mathbb{Z}} \sum_{\beta_1+\beta_2 \leq s} (\alpha_n)^{2\beta_1} \left\| \xi_2^{\beta_2}\hat{u}_n(\xi_2) \right\|^2_{L^2(\mathbb{R})} \\
        & = \sum_{n\in \mathbb{Z}} \sum_{\beta_1+\beta_2 \leq s}  \left\| (\alpha_n)^{\beta_1} \xi_2^{\beta_2}\hat{u}_n(\xi_2) \right\|^2_{L^2(\mathbb{R})} \\
        & \simeq \sum_{n\in \mathbb{Z}} \left\| (1+\alpha_n^2+\xi_2^2)^{\frac{s}{2}} \hat{u}_n(\xi_2) \right\|^2_{L^2(\mathbb{R})} = \| u \|^2_{H^s_\alpha(\mathcal{G})}.
    \end{align*}
    The extension to $u\in H^s_\alpha(\mathcal{G})$ follows by the density of $\mathcal{Q}_\alpha(\mathcal{G})$ in $H^s_\alpha(\mathcal{G})$.
\end{proof}
\begin{corol}
    Let $\mathcal{O}$ be an open set such that $\overline{\mathcal{O}} \subset \mathcal{G}$; then 
    \begin{equation*}
        H^s_\alpha(\mathcal{O}) = H^s(\mathcal{O}), \hspace{0.5cm} \forall s \in \mathbb{R}.
    \end{equation*}
\end{corol}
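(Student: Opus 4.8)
The plan is to establish the two inclusions for nonnegative $s$ using the norm equivalence from the preceding Theorem, and then pass to negative $s$ by duality. The geometric fact I would exploit throughout is that $\overline{\mathcal{O}} \subset \mathcal{G}$ forces $\mathcal{O}$ to lie at positive distance from the lateral boundary $\{x_1 = 0\} \cup \{x_1 = 2\pi\}$; since $\alpha$-quasi-periodicity only couples these two lines, it imposes no constraint whatsoever on functions living in $\mathcal{O}$, and this is what should make the two scales of spaces agree. For $s \geq 0$ the inclusion $H^s_\alpha(\mathcal{O}) \subseteq H^s(\mathcal{O})$ is immediate: any $u \in H^s_\alpha(\mathcal{O})$ is by definition $U|_{\mathcal{O}}$ for some $U \in H^s_\alpha(\mathcal{G})$, and by the Theorem $H^s_\alpha(\mathcal{G}) \subseteq H^s(\mathcal{G})$ with equivalent norms, so $u = U|_{\mathcal{O}} \in H^s(\mathcal{O})$.

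For the reverse inclusion (still $s \geq 0$) I would construct a quasi-periodic extension by cutoff and periodization. Fix $\varepsilon > 0$ with $\overline{\mathcal{O}} \subset (\varepsilon, 2\pi - \varepsilon) \times \mathbb{R}$, and choose a cutoff $\chi \in \mathit{C}^\infty(\mathbb{R}^2)$ with $\chi \equiv 1$ on a neighbourhood of $\overline{\mathcal{O}}$ and $\operatorname{supp} \chi \subset (\varepsilon/2, 2\pi - \varepsilon/2) \times \mathbb{R}$. Given $u \in H^s(\mathcal{O})$, written (after extension, or by the restriction characterisation of $H^s(\mathcal{O})$) as $V|_{\mathcal{O}}$ with $V \in H^s(\mathcal{G})$, the product $\chi V$ extends by zero to an element of $H^s(\mathbb{R}^2)$ precisely because its support is separated from the lines $x_1 = 0, 2\pi$ and sits in the interior of the strip. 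I would then set $U := \sum_{n \in \mathbb{Z}} e^{i n 2\pi \alpha}\, \tau_{-2\pi n}(\chi V)$. The translates have pairwise disjoint supports, so the series is locally finite, and a single index shift $m = n-1$ gives $U(x_1 + 2\pi, x_2) = e^{i 2\pi \alpha} U(x_1, x_2)$, i.e. $U \in \mathit{Q}'_\alpha(\mathbb{R}^2)$. On $\mathcal{G}$ only the $n = 0$ term survives, so $U|_{\mathcal{G}} = \chi V \in H^s(\mathcal{G})$; by the Theorem this forces $U \in H^s_\alpha(\mathcal{G})$, and since $\chi \equiv 1$ on $\mathcal{O}$ we obtain $U|_{\mathcal{O}} = V|_{\mathcal{O}} = u$. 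Hence $u \in H^s_\alpha(\mathcal{O})$, and the two spaces coincide for all $s \geq 0$.

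For $s < 0$ I would argue by duality, the exponent $-s > 0$ being already settled. Both the standard and the quasi-periodic negative-order spaces on $\mathcal{O}$ are realised as duals of the closures of the test functions $\mathit{D}(\mathcal{O})$ in the norms $\|\cdot\|_{H^{-s}(\mathcal{G})}$ and $\|\cdot\|_{H^{-s}_\alpha(\mathcal{G})}$, using the duality $H^{-s}_\alpha(\mathcal{G}) \cong (H^s_\alpha(\mathcal{G}))'$ recorded before the corollary. The key point is that the quasi-periodic predual is generated by $\mathit{Q}_\alpha(\mathcal{O})$, whose elements have support within $\mathcal{G}$ contained in $\mathcal{O}$; since $\overline{\mathcal{O}} \subset \mathcal{G}$ keeps these supports compactly interior, restriction to $\mathcal{O}$ identifies $\mathit{Q}_\alpha(\mathcal{O})$ with exactly $\mathit{D}(\mathcal{O})$, and conversely the periodization of the previous paragraph sends each $\phi \in \mathit{D}(\mathcal{O})$ back into $\mathit{Q}_\alpha(\mathcal{G})$. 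Applying the $s \geq 0$ norm equivalence with exponent $-s$ shows the two closures are the same subspace with equivalent norms; taking duals yields $H^s_\alpha(\mathcal{O}) = H^s(\mathcal{O})$, completing the proof for all $s \in \mathbb{R}$.

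The routine part is the $s \geq 0$ extension. The step needing real care is the negative-order case: one must verify that the ``dual of the test-function closure'' is indeed the intrinsic meaning of $H^s(\mathcal{O})$ and $H^s_\alpha(\mathcal{O})$ for $s < 0$, and that the zero-extension used in the periodization is legitimate for non-integer order — which is exactly where $\operatorname{dist}(\overline{\mathcal{O}}, \partial \mathcal{G}) > 0$ is indispensable. I expect this bookkeeping of dual pairings, rather than any hard estimate, to be the main obstacle; the separation $\overline{\mathcal{O}} \subset \mathcal{G}$ is precisely what erases every quasi-periodic coupling and makes all the identifications go through.
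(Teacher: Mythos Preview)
The paper states this corollary without proof, presumably treating it as an immediate consequence of the preceding norm-equivalence theorem; there is no argument to compare against. Your proposal is therefore not a match or a mismatch with the paper's proof---it is a full proof where the paper offers none.

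Your argument is sound and follows the natural route. The $s\geq 0$ part is exactly right: one inclusion is automatic from the theorem, and the reverse is obtained by the cutoff-and-periodize construction, which is the standard way to manufacture a quasi-periodic extension of a function with compact support away from the lateral boundary. Your periodization $U=\sum_{n\in\mathbb{Z}} e^{i2\pi n\alpha}\tau_{-2\pi n}(\chi V)$ is correct, and the disjoint-support observation makes the $H^s$ regularity of $U|_{\mathcal{G}}$ transparent. For $s<0$ you correctly identify the only delicate point: one has to check that the paper's restriction definition of $H^s_\alpha(\mathcal{O})$ agrees with the dual characterisation you invoke. This is indeed a bookkeeping issue rather than an analytic one, and the geometric hypothesis $\overline{\mathcal{O}}\subset\mathcal{G}$ is exactly what makes the identification $\mathit{Q}_\alpha(\mathcal{O})\leftrightarrow\mathit{D}(\mathcal{O})$ go through. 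Your self-assessment of where the care is needed is accurate.
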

\begin{defin}
    We introduce the Sobolev space of quasi-periodic functions
    \begin{equation*}
        \Tilde{H}^s_\alpha(\mathcal{O}) := \overline{\mathcal{Q}_\alpha(\mathcal{O})}^{\| \cdot \|_{H^s_\alpha(\mathcal{G})}}.
    \end{equation*}
\end{defin} \noindent
For $s=1$ this is equivalent to the space of functions which are zero at the boundary of $\mathcal{O}$.\\

Finally, we define the quasi-periodic Sobolev spaces on one-dimensional boundaries, to be used when introducing the Dirichlet to Neumann operator.
\begin{defin}
    Let $0\leq s < \infty$. We define $H^s[0,2\pi]$ as
    \begin{equation*}
        H^s[0,2\pi] := \left\{ \phi \in L^2(0,2\pi) \hspace{0.15cm} \biggl| \hspace{0.15cm} \sum_{n\in \mathbb{Z}} (1+n^2)^s |\phi_n|^2 < \infty \right\},
    \end{equation*}
    where $\{ \phi_n \}_{n\in\mathbb{Z}}$ are the Fourier coefficients for $\phi \in L^2(0,2\pi)$.
\end{defin}
\begin{thm} \label{sobolev_line}
    For $0\leq s < \infty$, $H^s[0,2\pi]$ is a Hilbert space with inner product and induced norm given by
    \begin{equation*}
        (u,v)_{H^s[0,2\pi]} := \sum_{n\in \mathbb{Z}} (1+n^2)^s u_n \overline{v_n}, \hspace{0.8cm} \| u \|_{H^s[0,2\pi]} := (u,u)^{1/2}_{H^s[0,2\pi]}. 
    \end{equation*}
\end{thm}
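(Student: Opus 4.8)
The plan is to realize $H^s[0,2\pi]$ as isometrically isomorphic to a weighted sequence space, from which the Hilbert-space structure follows automatically. First I would introduce the weighted space
\[
    \ell^2_s := \Bigl\{ (c_n)_{n\in\mathbb{Z}} : \sum_{n\in\mathbb{Z}} (1+n^2)^s |c_n|^2 < \infty \Bigr\},
\]
equipped with the sesquilinear form $(c,d)_s := \sum_{n\in\mathbb{Z}} (1+n^2)^s c_n \overline{d_n}$ for $c=(c_n)$, $d=(d_n)$, and observe that it is a Hilbert space: the map $(c_n) \mapsto \bigl((1+n^2)^{s/2} c_n\bigr)$ is a linear isometric bijection onto the standard space $\ell^2(\mathbb{Z})$, so completeness of $\ell^2_s$ is inherited from that of $\ell^2(\mathbb{Z})$.

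Next I would verify that the proposed form on $H^s[0,2\pi]$ is a genuine inner product. Absolute convergence of the defining series follows from the Cauchy--Schwarz inequality applied to the weights: writing $a_n = (1+n^2)^{s/2}|u_n|$ and $b_n = (1+n^2)^{s/2}|v_n|$, one has $\sum_n (1+n^2)^s |u_n\overline{v_n}| = \sum_n a_n b_n \le \bigl(\sum_n a_n^2\bigr)^{1/2}\bigl(\sum_n b_n^2\bigr)^{1/2} < \infty$, both factors being finite by the membership condition. Sesquilinearity and conjugate symmetry are immediate from the formula, while positivity, $(u,u)_{H^s[0,2\pi]} = \sum_n (1+n^2)^s |u_n|^2 \ge 0$, together with the fact that $(u,u)_{H^s[0,2\pi]}=0$ forces every $u_n=0$ and hence $u=0$ in $L^2(0,2\pi)$ by completeness of the trigonometric system, gives positive-definiteness. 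The induced norm is then exactly the stated one.

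It remains to prove completeness, which is the heart of the argument. Consider the map $T : H^s[0,2\pi] \to \ell^2_s$ sending $\phi$ to its Fourier coefficient sequence $(\phi_n)$. By construction $T$ is linear, and it is an isometry since $(T\phi,T\phi)_s = \sum_n (1+n^2)^s |\phi_n|^2 = \|\phi\|_{H^s[0,2\pi]}^2$. It is injective because a function with vanishing Fourier coefficients is zero in $L^2(0,2\pi)$, and surjective because, using $s\ge 0$ so that $(1+n^2)^s \ge 1$, every sequence in $\ell^2_s$ already lies in $\ell^2(\mathbb{Z})$ and is therefore, by the Riesz--Fischer theorem, the Fourier coefficient sequence of some $f\in L^2(0,2\pi)$; that $f$ then satisfies the summability condition and so belongs to $H^s[0,2\pi]$. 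Hence $T$ is an isometric isomorphism onto the Hilbert space $\ell^2_s$, and $H^s[0,2\pi]$ inherits completeness.

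The step I expect to require the most care is the surjectivity of $T$, that is, guaranteeing that the abstract limit of a Cauchy sequence is realized by an honest $L^2$-function rather than merely a formal series. This is precisely where the hypothesis $s\ge 0$ enters, through the embedding $\ell^2_s \subseteq \ell^2(\mathbb{Z})$, which lets the Riesz--Fischer theorem supply the underlying function; for $s<0$ this identification would fail, and $H^s[0,2\pi]$ would instead have to be defined by duality or by completion.
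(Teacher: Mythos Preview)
Your proof is correct and follows the standard route of identifying $H^s[0,2\pi]$ with the weighted sequence space $\ell^2_s$ via the Fourier coefficient map; the argument is complete and the role of the hypothesis $s\ge 0$ in the surjectivity step is identified accurately. The paper, however, states this theorem without proof, so there is no argument to compare against; your write-up would serve as a full justification of a result the paper takes for granted.
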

\begin{defin}
    Let $0\leq s < \infty$. We define $H^s_\alpha[0,2\pi]$ as
    \begin{equation*}
        H^s_\alpha[0,2\pi] := \left\{ \phi \in L^2(0,2\pi) \hspace{0.15cm} | \hspace{0.15cm} e^{-i\alpha x} \phi(x) \in H^s[0,2\pi] \right\}.
    \end{equation*}
\end{defin}
\begin{thm}
    For $0\leq s < \infty$, $H^s_\alpha[0,2\pi]$ is a Hilbert space with inner product and induced norm given by
    \begin{equation*}
        (u,v)_{H^s_\alpha[0,2\pi]} := \sum_{n\in \mathbb{Z}} (1+\alpha_n^2)^s u_{n,\alpha} \overline{v_{n,\alpha}}, \hspace{0.8cm} \| u \|_{H^s_\alpha[0,2\pi]} := (u,u)^{1/2}_{H^s_\alpha[0,2\pi]} ,
    \end{equation*}
    where
    \begin{equation*}
        u_{n,\alpha} = \frac{1}{2\pi} \int_0^{2\pi} e^{-inx}e^{-i\alpha x} u(x) \hspace{0.05cm} dx.
    \end{equation*}
\end{thm}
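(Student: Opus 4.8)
The plan is to reduce everything to the Hilbert space structure of $H^s[0,2\pi]$ already established in Theorem \ref{sobolev_line}, by exploiting the multiplication operator $M_\alpha : u \mapsto e^{-i\alpha x}u(x)$. By the very definition of $H^s_\alpha[0,2\pi]$ this operator maps it into $H^s[0,2\pi]$, and it is a linear bijection with inverse $v \mapsto e^{i\alpha x}v(x)$. The first step is the elementary but crucial observation that the quantities $u_{n,\alpha}$ appearing in the statement are exactly the ordinary Fourier coefficients of $M_\alpha u = e^{-i\alpha x}u$, since $\frac{1}{2\pi}\int_0^{2\pi} e^{-inx}\bigl(e^{-i\alpha x}u(x)\bigr)\,dx = u_{n,\alpha}$. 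Consequently $\|M_\alpha u\|_{H^s[0,2\pi]}^2 = \sum_{n\in\mathbb{Z}} (1+n^2)^s |u_{n,\alpha}|^2$, whereas the proposed norm carries the shifted weight $(1+\alpha_n^2)^s$.

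Next I would verify that the proposed sesquilinear form is genuinely an inner product. Sesquilinearity and conjugate symmetry are immediate, since for each $n$ the map $u \mapsto u_{n,\alpha}$ is linear and the weights $(1+\alpha_n^2)^s$ are strictly positive reals. For positive definiteness, $\|u\|_{H^s_\alpha[0,2\pi]} = 0$ forces $u_{n,\alpha} = 0$ for every $n$, hence $e^{-i\alpha x}u = 0$ in $L^2(0,2\pi)$ and therefore $u = 0$; finiteness of the form on the whole space follows from the norm comparison below together with Cauchy--Schwarz.

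The heart of the argument is the comparison of the two weights. Since we restrict to $\alpha \in [0,1)$ the shift is bounded, and $\bigl||n+\alpha|-|n|\bigr| \le \alpha < 1$ yields constants $0 < c \le C$ with $c\,(1+n^2) \le 1+\alpha_n^2 \le C\,(1+n^2)$ uniformly in $n \in \mathbb{Z}$; raising to the power $s \ge 0$ preserves this two-sided bound. This shows simultaneously that $\|u\|_{H^s_\alpha[0,2\pi]}$ is finite exactly when $M_\alpha u \in H^s[0,2\pi]$, i.e. on all of $H^s_\alpha[0,2\pi]$, and that $\|\cdot\|_{H^s_\alpha[0,2\pi]}$ is equivalent to $\|M_\alpha(\cdot)\|_{H^s[0,2\pi]}$.

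Finally, completeness transfers through $M_\alpha$. If $(u_k)$ is Cauchy in $H^s_\alpha[0,2\pi]$, the norm equivalence makes $(M_\alpha u_k)$ Cauchy in $H^s[0,2\pi]$, which converges to some $v$ by Theorem \ref{sobolev_line}; then $u_k \to M_\alpha^{-1}v = e^{i\alpha x}v$ in $H^s_\alpha[0,2\pi]$, so the space is complete. Combined with the inner-product axioms this establishes the Hilbert space structure. The only point requiring genuine care is the uniform weight equivalence, since it is what guarantees both that the stated norm is finite on the entire space and that the topology it induces coincides with the complete one pulled back from $H^s[0,2\pi]$; everything else is formal.
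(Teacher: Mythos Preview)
The paper states this theorem without proof; it is presented as a direct analogue of Theorem~\ref{sobolev_line} (the corresponding result for $H^s[0,2\pi]$), which is likewise stated without proof. So there is no ``paper's own proof'' to compare against.

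Your argument is correct and is exactly the natural one: pull the structure back from $H^s[0,2\pi]$ via the bijection $M_\alpha:u\mapsto e^{-i\alpha x}u$. The identification $u_{n,\alpha}=(M_\alpha u)_n$ is right, and the uniform two-sided weight bound $c(1+n^2)\le 1+\alpha_n^2\le C(1+n^2)$ for $\alpha\in[0,1)$ is the only substantive estimate needed; from it both the well-definedness of the norm on all of $H^s_\alpha[0,2\pi]$ and the equivalence with the transported norm $\|M_\alpha(\cdot)\|_{H^s[0,2\pi]}$ follow. Completeness then transfers exactly as you describe. Nothing is missing.
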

\begin{defin}
     For $0\leq s < \infty$, we define $H^{-s}[0,2\pi]$ as the dual space of $H^s[0,2\pi]$ and $H^{-s}_\alpha[0,2\pi]$ as the dual space of $H^s_\alpha[0,2\pi]$.
\end{defin} \noindent
We now assume $\Gamma \subset \mathcal{G}$ to be a single period of a periodic curve parametrized by a Lipschitz continuous function $z$ so that
\begin{equation*}
    \Gamma := \{ z(t), t \in (0,2\pi) \},
\end{equation*}
where $z$ may be continuously extended to all $\mathbb{R}$, with
\begin{equation*}
    z(t)=(z_1(t),z_2(t)), \hspace{0.7cm} z_1(t+2\pi n) = z_1(t) + 2\pi n, \hspace{0.7cm} z_2(t+2\pi n) = z_2(t),
\end{equation*}
for all $n \in \mathbb{Z}$ and $t \in [0,2\pi)$.\\
We introduce quasi-periodic Sobolev spaces on arbitrary curves.
\begin{defin}
    Let $0 \leq s < r$, with $r \in \mathbb{N}$ such that $\Gamma$ is a periodic curve of class $C^{r-1,1}$. We define the quasi-periodic Sobolev space of order $s$ over $\Gamma$ as
    \begin{equation*}
        H^s_\alpha(\Gamma) := \{ u \in L^2(\Gamma) \hspace{0.15cm} | \hspace{0.15cm} (u \circ z) (t) \in H^s_\alpha[0,2\pi]  \},
    \end{equation*}
    and equip it with the inner product
    \begin{equation*}
        (u,v)_{H^s_\alpha(\Gamma)} := (u\circ z, v\circ z)_{H^s_\alpha[0,2\pi] },
    \end{equation*}
    where $z : (0, 2\pi) \to \Gamma$ is a parametrization of $\Gamma$.
\end{defin}
\begin{prop}
    Let $0 \leq s < \infty$. The space $H^s_\alpha(\Gamma)$, along with the inner product $(\cdot,\cdot)_{H^s_\alpha(\Gamma)}$ is a Hilbert space. Moreover, these spaces are independent of the chosen parametrization of $\Gamma$.
\end{prop}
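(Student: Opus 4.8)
The plan is to establish the two assertions separately: first that $(\cdot,\cdot)_{H^s_\alpha(\Gamma)}$ turns $H^s_\alpha(\Gamma)$ into a Hilbert space, and then that the construction does not depend on the chosen parametrization $z$.

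First I would exploit the fact that the pullback map
\begin{equation*}
    \Phi : H^s_\alpha(\Gamma) \to H^s_\alpha[0,2\pi], \qquad \Phi(u) := u\circ z,
\end{equation*}
is, by the very definition of the inner product, inner-product preserving, hence linear and isometric. Sesquilinearity and conjugate symmetry of $(\cdot,\cdot)_{H^s_\alpha(\Gamma)}$ are inherited directly from those of $(\cdot,\cdot)_{H^s_\alpha[0,2\pi]}$, and positive definiteness holds because $z$ covers $\Gamma$ up to a null set, so $\|u\circ z\|_{H^s_\alpha[0,2\pi]}=0$ forces $u=0$ in $L^2(\Gamma)$. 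It therefore remains to prove completeness, for which I would show that $\Phi$ is in fact surjective, i.e. an isometric isomorphism onto the Hilbert space $H^s_\alpha[0,2\pi]$ (a Hilbert space by the preceding theorem). Given $\phi\in H^s_\alpha[0,2\pi]\subset L^2(0,2\pi)$, the candidate preimage is $u:=\phi\circ z^{-1}$; since the parametrization $z$ is bi-Lipschitz and regular, the change of variables $\int_\Gamma|u|^2\,\mathrm{d}s=\int_0^{2\pi}|\phi(t)|^2\,|z'(t)|\,\mathrm{d}t$ shows $u\in L^2(\Gamma)$ and $u\circ z=\phi$, whence $u\in H^s_\alpha(\Gamma)$ and $\Phi(u)=\phi$. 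Completeness of $H^s_\alpha(\Gamma)$ then transfers from that of $H^s_\alpha[0,2\pi]$ through the isometric isomorphism $\Phi$.

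Next I would turn to parametrization independence. Let $z$ and $\tilde z$ be two admissible parametrizations of $\Gamma$. Since both cover the same curve compatibly with the periodicity relations for $z_1,z_2$, there is a reparametrization $\theta:\mathbb{R}\to\mathbb{R}$, a bi-Lipschitz homeomorphism of class $C^{r-1,1}$ satisfying $\theta(t+2\pi)=\theta(t)+2\pi$, with $\tilde z=z\circ\theta$. Consequently $u\circ\tilde z=(u\circ z)\circ\theta$, and the whole question reduces to showing that the composition operator $C_\theta:\phi\mapsto\phi\circ\theta$ is a bounded, boundedly invertible map of $H^s_\alpha[0,2\pi]$ onto itself; for then the two candidate norms on $H^s_\alpha(\Gamma)$ are equivalent and define the same Hilbert space, up to equivalent inner products. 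To analyze $C_\theta$ I would strip off the quasi-periodic weight: writing $\phi=e^{i\alpha x}\psi$ with $\psi\in H^s[0,2\pi]$, one computes
\begin{equation*}
    e^{-i\alpha t}\,(\phi\circ\theta)(t)=e^{i\alpha(\theta(t)-t)}\,\psi(\theta(t)),
\end{equation*}
where $\theta-\mathrm{id}$ is $2\pi$-periodic and of class $C^{r-1,1}$, so $e^{i\alpha(\theta-\mathrm{id})}$ is a smooth periodic multiplier and multiplication by it is bounded on $H^s[0,2\pi]$ for $s<r$. Thus the claim for $C_\theta$ on $H^s_\alpha[0,2\pi]$ reduces to the claim for $\psi\mapsto\psi\circ\theta$ on the ordinary periodic space $H^s[0,2\pi]$.

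The main obstacle is precisely this last step: the boundedness of the composition operator on $H^s[0,2\pi]$. For integer $s$ I would proceed by the chain rule, bounding the derivatives of $\psi\circ\theta$ by those of $\psi$ times products of derivatives of $\theta$ up to order $s$; the lower bound $0<c\le\theta'$ together with the upper bounds on $\theta',\dots,\theta^{(s)}$ (available because $\Gamma\in C^{r-1,1}$ with $s<r$) yields a two-sided estimate $\|\psi\circ\theta\|_{H^s}\simeq\|\psi\|_{H^s}$. For non-integer $s$ the estimate follows by interpolating between consecutive integer orders, exactly as the excerpt already does for the equivalence of the two norms on $H^s_\alpha(\mathcal{G})$. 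Invertibility comes from running the same argument with $\theta^{-1}$ in place of $\theta$. Combining the multiplier bound with the composition bound gives $\|C_\theta\phi\|_{H^s_\alpha[0,2\pi]}\simeq\|\phi\|_{H^s_\alpha[0,2\pi]}$, hence $\|u\circ\tilde z\|_{H^s_\alpha[0,2\pi]}\simeq\|u\circ z\|_{H^s_\alpha[0,2\pi]}$, so that $H^s_\alpha(\Gamma)$, together with its Hilbert structure up to equivalent norms, is independent of the parametrization.
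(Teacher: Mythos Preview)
The paper does not actually supply a proof of this proposition; it is stated without proof (the surrounding material follows \cite{Pinto}, and this result is evidently quoted from there). So there is no ``paper's own proof'' to compare against, and your proposal should be assessed on its own merits.

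Your argument is sound in outline and in most details. The Hilbert-space part is clean: the pullback $\Phi:u\mapsto u\circ z$ is isometric by definition of the inner product, and surjectivity via $u:=\phi\circ z^{-1}$ is the right move; the only implicit assumption you use is that $z$ is a regular (bi-Lipschitz) parametrization so that $|z'|$ is bounded above and below away from zero, which is standard for curve parametrizations and consistent with how the paper later uses $\|\dot z\|$ in defining $H^{-s}_\alpha(\Gamma)$. For parametrization independence, reducing to the boundedness of $C_\theta:\psi\mapsto\psi\circ\theta$ on the periodic space $H^s[0,2\pi]$ via the multiplier $e^{i\alpha(\theta-\mathrm{id})}$ is exactly the right decomposition; the chain-rule estimate for integer $s$ together with interpolation for non-integer $s$ is the standard route, and the regularity hypothesis $s<r$ with $\Gamma\in C^{r-1,1}$ is precisely what is needed to control the derivatives of $\theta$ appearing in those bounds. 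One small point worth making explicit: the existence of the reparametrization $\theta$ of class $C^{r-1,1}$ with $\theta'\geq c>0$ relies on both $z$ and $\tilde z$ being regular parametrizations of the same $C^{r-1,1}$ curve, which is again a mild implicit assumption.
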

\begin{defin}
    For $0 \leq s < \infty$, we define $H^{-s}_\alpha(\Gamma)$ as the completion of $H^0_\alpha(\Gamma) \equiv L^2_\alpha(\Gamma)$ with respect to the norm:
    \begin{equation*}
        \| u \|_{H^{-s}_\alpha(\Gamma)} := \biggl\| (u \circ z) \hspace{0.1cm} \| \stackrel{.}{z} \|_{\mathbb{R}^2} \hspace{0.1cm} \biggl\|_{H^{-s}[0,2\pi]},
    \end{equation*}
    with $\stackrel{.}{z} (t) := (\stackrel{.}{z}_1 (t), \stackrel{.}{z}_2 (t))$ and $z$ as before.
\end{defin}

\section{Model problem} \label{sec2} 
We now present the problem of scattering of a plane wave by a periodic surface. The diffraction grating problem is referred to as wave scattering by periodic structures. The grating problem has been studied in depth with different approaches in many articles and books such as \cite{Bonnet}, \cite{CIVILETTI2020112478} or \cite{Kirsch}.\\

First, following \cite{CIVILETTI2020112478}, we will derive the strong formulation and provide an existence result, then we will focus on the weak formulation and describe how to compute a numerical solution of the problem with a Finite Element Method.\\

We consider linear optics with an $ e^{-i \omega t}$ dependence on time $t$, where $i = \sqrt{-1}$ and $\omega$ is the angular frequency of light. This assumption agrees with the notation in \cite{CIVILETTI2020112478}, \cite{HUTTUNEN200227} and \cite{Melenk}, while in \cite{Survey}, \cite{PWDG} and \cite{KAPITA2018208} the time-dependence is assumed to be $ e^{i \omega t}$. The main difference between these two notations is that with the first one a plane wave with the expression $e^{ik \mathbf{x} \cdot \mathbf{d}}$ effectively propagates in the direction corresponding to the vector $\mathbf{d}$, while with the second notation the wave propagates in the opposite direction $-\mathbf{d}$. This difference in the notation leads to opposite signs in the Sommerfeld radiation condition and also an opposite impedance boundary condition. We choose to follow the convention of Civiletti \cite{CIVILETTI2020112478} since this is more coherent with the plane wave propagation direction and since the Sommerfeld radiation condition in this case is more standard.\\

Under the assumption of $ e^{-i \omega t}$ dependence on time $t$, from Maxwell’s equations one can show that the electric field phasor $\mathbf{E}$ solves
\begin{equation} \label{max}
    \Delta \mathbf{E} = -\omega^2 \mu_0 \varepsilon_0 \varepsilon \mathbf{E} - \nabla \left( \mathbf{E}\cdot\frac{\nabla \varepsilon}{\varepsilon} \right),
\end{equation}
where $\varepsilon=\varepsilon(x_1,x_2)$ is the spatially dependent relative permittivity, and $\varepsilon_0$ and $\mu_0$ are respectively the permittivity and permeability of vacuum. We also assume that $\mu = \mu_0$ everywhere. The domain under consideration is assumed to be invariant in the $\mathbf{e}_3 = (0, 0, 1)$ direction, so the electric field phasor is invariant in the $\mathbf{e}_3$ direction, i.e. $\mathbf{E}= \mathbf{E}(x_1,x_2)$.\\
For $s$-polarized light, we also have that $\mathbf{E} = (0, 0, E_3)$, and so the last term on the right hand side of (\ref{max}) is zero. The wavenumber in air is denoted by $k = \omega/c_0$ and the speed of light in air is $c_0 = 1/\sqrt{\varepsilon_0 \mu_0}$. We obtain the vector Helmholtz equation
\begin{equation}
    \Delta \mathbf{E} +k^2\varepsilon \mathbf{E}=0,
\end{equation}
where $E_1 = E_2 = 0$. So we see that this reduces to a scalar Helmholtz equation. A similar result holds for the $p$-polarization case for the magnetic field phasor $\mathbf{H}$, but that case is more difficult because a $1/\varepsilon$ term appears in the principal part of the differential operator.\\
\begin{figure}[]
    \centering
    \includegraphics[width=.6\textwidth]{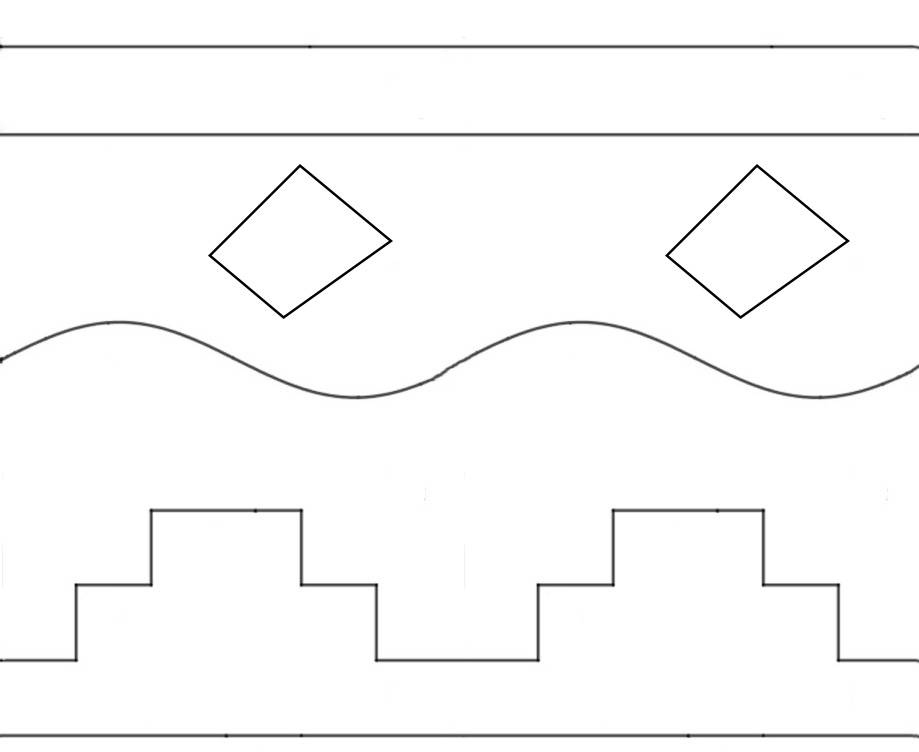}
    \caption{Geometry of the region $\Omega_0$}
    \label{fig:scatt}
\end{figure}\\

We now study the problem of scattering of a plane wave by a periodic structure: we consider a domain $\Omega_0 \subset \mathbb{R}^2$, which is periodic in the $x_1$ direction with period $L$: $\Omega_0$ is divided into subdomains by different interfaces, the profiles of which are picewise linear of Lipschitz functions with the same period $L$. These interfaces can cross all the domain or can also be scattering objects located inside a domain period. The domain division corresponds to its composition of different materials, which can be penetrable or impenetrable by waves. In our model we assume that the lower interface is a straight line impenetrable by waves, which gives us a Dirichlet boundary condition. A possible scattering region is represented in Figure \ref{fig:scatt}. Here we can see many interfaces $\Gamma_j$ inside the region, that are periodic in the $x_1$ direction with period $L$. The relative permittivity $\varepsilon$ is assumed to be $L$-periodic in $x_1$ and invariant in $x_3$.
\begin{figure}
    \centering
    \includegraphics[width=.5\textwidth]{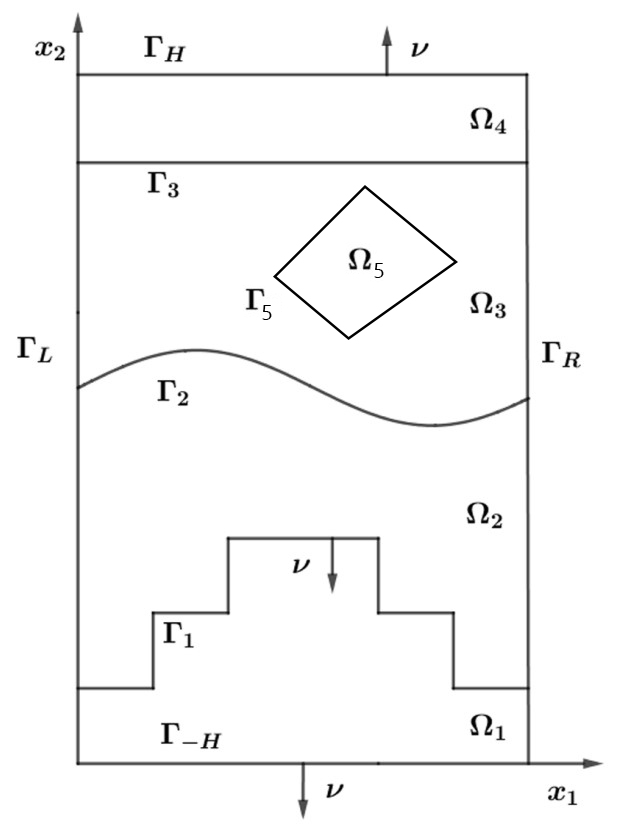}
    \caption{The truncated domain $\Omega$}
    \label{fig:region}
\end{figure}\\

The mathematical formulation is a Helmholtz equation with a periodically variable relative permittivity $\varepsilon$.  By a simple change of variables we can assume the period to be $2\pi$. Due to the periodicity of the scattering region and the relative permittivity, we can reduce our study to a cell of the scatterer, which is the domain $\mathcal{G}:= \{ \mathbf{x} \in \mathbb{R}^2 : 0 <x_1 <2\pi \}$ defined before. Then it will be necessary to truncate the cell on the $x_2$ axis, getting a bounded 2D domain $\Omega = \{x \in \mathbb{R}^2 : 0 < x_1 < 2\pi, -H < x_2 < H \},$ where $H > 0$ is an arbitrary positive constant. We will refer to the two straight lines that delimit the domain $\Omega$ form above and below as $\Gamma_H$ and $\Gamma_{-H}$. Formally we have that
\begin{equation}
    \Gamma_{\pm H} := \{ \mathbf{x \in \mathbb{R}^2} \hspace{0.3cm} : \hspace{0.3cm} x_1 \in [0, 2\pi], \hspace{0.5cm} x_2 = \pm H \}.
\end{equation}
In Figure \ref{fig:region} we can see a possible scattering region, obtained by truncating the region in Figure \ref{fig:scatt}.\\

We will consider the scattering problem when an $s$-polarized plane wave with electric field phasor polarized in the $\mathbf{e}_3$ direction is incident on $\Omega$ with incidence angle $\theta \in \left(-\pi,0 \right)$. The third component of the incident electric field phasor can be stated as
\begin{equation*}
    u^i(\mathbf{x})=u^i(x_1,x_2)=\exp\{ik(x_1 \cos \theta + x_2 \sin \theta)\}.
\end{equation*}

Our aim is to compute the total wave on a two-dimensional domain $\Omega$. The wave solves the Helmholtz equation in $\Omega$ with quasi-periodic boundary conditions
\begin{equation} \label{bound}
    \begin{cases} \Delta u + k^2\varepsilon u=0, & \hspace{0.5cm} \text{in} \hspace{0.1cm} \Omega, \\
    u(2\pi,x_2) = e^{-i\alpha_0 2\pi}u(0,x_2) & \hspace{0.5cm} \forall \hspace{0.05cm} x_2, \\
    \frac{\partial}{\partial x_2}u(2\pi,x_2) = e^{-i\alpha_0 2\pi}\frac{\partial}{\partial x_2}u(0,x_2) & \hspace{0.5cm} \forall \hspace{0.05cm} x_2 \\
    u = 0, & \hspace{0.5cm} \text{on} \hspace{0.1cm} \Gamma_{-H}, \end{cases}
\end{equation}
where $\alpha_0 = k \cos \theta$. We need to insert the periodicity condition to achieve the uniqueness of the solution. With this conditions, we can solve the problem in the cell and then extend the solution with quasi-periodicity.

We assume that $\Omega$ is divided in different subregions; the lines that divide $\Omega$ could either be piecewise linear or smooth functions and could either go from the left to the right boundary or $\Omega$ or close on themselves. We call this separating interfaces $\Gamma_j$. In our experiments we will only consider the piecewise linear case since we do not want to deal with integrals on curved edges. If we have a total number of $I$ lines dividing the domain, $\Omega$ is separated into $I+1$ subdomains, that we call $\Omega_j$, for $j=1,...,I+1$. 

Furthermore, we have the following assumptions on $\varepsilon$. First, $\varepsilon \in C^{(1,1)}(\Omega_k)$ for all $k = 1, 2, ... , I+1$. Also, $\varepsilon$ is allowed to be complex valued in $\Omega$, and either $\{\Re(\varepsilon) > 0, \Im(\varepsilon) \geq 0 \}$ or $\{\Re(\varepsilon) \leq 0, \Im(\varepsilon) > 0 \}$ in $\Omega$. A standard assumption from the literature is that $\varepsilon$ is constant in every $\Omega_k$ subdomain of $\Omega$, but in the general case we will consider a relative permittivity which changes smoothly in the domain. Typically, we take the relative permittivity in the upper half space $\Omega_H^+ := \{ \mathbf{x} \in \mathbb{R}^2 : x_2 > H \}$ to be $\varepsilon^+ = 1$. Thus, the half space above $\Omega$ is air.

On each interface $\Gamma_j$, we choose the unit normal to point downwards or outwards. By $\llbracket \phi \rrbracket_{\Gamma_j}$ we denote the jump of a function $\phi$ across the interface $\Gamma_j$. Thus,
\begin{equation*}
    \llbracket \phi \rrbracket_{\Gamma_j} := \phi |_{\Gamma_j}^+ - \phi |_{\Gamma_j}^-
\end{equation*}
where $\phi |_{\Gamma_k}^+$ is the limit taken from outside $\Omega_{j}$ and $\phi |_{\Gamma_j}^-$ is the limit taken from inside $\Omega_j$, for $1 \leq j \leq I$.

\section{Variational formulation}
We now derive the variational formulation of the scattering problem. First, we need to specify a radiation condition on $\Gamma_H$, as we want that the scattered field "goes upward" on $\Gamma_H$. To specify this condition we will make use of the Dirichlet-to-Neumann operator.

We assume that on the upper domain $u$ can be represented as a linear combination of upward propagating waves and evanescent waves. Since $u$ is $\alpha_0$-quasi-periodic in $\Omega$, we can write
\begin{equation}
    u(\mathbf{x})=\sum_{n\in\mathbb{Z}} u_n(x_2) e^{i\alpha_n x_1},
\end{equation}
for $\mathbf{x}\in \Omega$, where $\alpha_n = \alpha_0 +n$ and the Fourier coefficients $u_n(x_2)$ defined as in (\ref{fourier_coeff}). We define, for $a\geq H$, $\Gamma_a := \{ \mathbf{x} \in \mathbb{R}^2 : x_2=a \}$; we also define $\Omega_H^+ := \{ \mathbf{x} \in \mathbb{R}^2 : 0 \leq x_1 \leq 2\pi, \hspace{0.1cm} x_2 > H \}$. 

We can write the total field $u$ as the sum of incident and scattered field, so we have
\begin{equation*}
    u = u^i+u^s.
\end{equation*}
We know that $u^i$ solves the Helmholtz equation in $\Omega_H^+$ and, since $u^s$ in $C^\infty$ for $x_2>H$ and it is quasi-periodic, we can expand it in terms of Fourier coefficients $u^s(\mathbf{x})=\sum_{n\in\mathbb{Z}} u^s_n(x_2) e^{i\alpha_n x_1}$, with
\begin{equation}
    u^s_n(x_2) = \frac{1}{2\pi} \int_0^{2 \pi} e^{-i\alpha_n x_1} u^s(x_1,x_2) \hspace{0.05cm} dx_1.
\end{equation}
We have that $u^s$ satisfies $\Delta u^s + k^2 \varepsilon^+ u^s = 0$ for $x_2 > H$, where $\varepsilon^+$ is the relative permittivity in the region above the domain $\Omega$, which we usually assume to be equal to $1$; the Fourier coefficients of $u^s$ must solve the following differential equation:
\begin{equation}
    -\frac{\partial^2 u^s_n}{\partial x_2^2} - (k^2 \varepsilon^+ - \alpha^2_n)u^s_n = 0, \hspace{0.8cm} \text{if} \hspace{0.2cm} x_2 > H.
\end{equation}
Since each one of these equations has two independent solutions, we are forced to choose between them as follows:
\begin{itemize}
    \item[(i)] if $(k^2 \varepsilon^+ - \alpha^2_n)<0$, we select the decaying solution
    \begin{equation*}
        u^s_n(x_2) = u_n^s(H) e^{-\sqrt{\alpha^2_n-k^2 \varepsilon^+}(x_2-H)},
    \end{equation*}
    \item[(ii)] if $(k^2 \varepsilon^+ - \alpha^2_n)=0$, we choose the constant
    \begin{equation*}
        u^s_n(x_2) = u_n^s(H),
    \end{equation*}
    \item[(iii)] if $(k^2 \varepsilon^+ - \alpha^2_n)>0$, we opt for the solution corresponding to an outgoing wave
    \begin{equation*}
        u^s_n(x_2) = u_n^s(H) e^{i\sqrt{k^2 \varepsilon^+-\alpha^2_n}(x_2-H)}.
    \end{equation*}
\end{itemize}
Then the scattered field has the following form
\begin{equation} \label{representation}
    u^s(\mathbf{x})=\sum_{n\in\mathbb{Z}} u^s_n(H) e^{i\beta_n (x_2-H)} e^{i\alpha_n x_1},
\end{equation}
where we assume that $\alpha_n^2 \neq k^2 \varepsilon^+ $ for every $n$ and 
\begin{equation} \label{beta}
    \beta_n = \begin{cases} \sqrt{k^2\varepsilon^+-\alpha_n^2} & \alpha_n^2 < k^2\varepsilon^+, \\ i\sqrt{\alpha_n^2-k^2\varepsilon^+} & \alpha_n^2 > k^2\varepsilon^+. \end{cases} 
\end{equation}

We are now able to introduce an appropriate radiation condition to be satisfied by the scattered field 
\begin{defin}[Radiation Condition]
    We say that $u\in H^1_{\alpha_0}(\Omega)$ satisfies the radiation conditions at infinity if there exists $h > 0$ such that, for all $x_2 \geq h$, there holds
    \begin{equation} 
        u^s(\mathbf{x}) = \sum_{n\in\mathbb{Z}} u_n e^{i\beta_n (x_2-h)} e^{i\alpha_n x_1},
    \end{equation}
    with $u_n \in \mathbb{C}$ for all $n\in \mathbb{Z}$ and $\beta_n$ as in (\ref{beta}).
\end{defin}

We now want to impose this radiation condition on $\Gamma_H$; to this purpose, we introduce the Dirichlet-to-Neumann operator.\\ Taking the normal derivative of $u^s$ on $\Gamma_H$, we have that
\begin{equation*}
    \frac{\partial u^s}{\partial x_2}\biggl|_{\Gamma_H} = i\sum_{n\in\mathbb{Z}} u^s_n(H) \beta_n e^{i\alpha_nx_1}.
\end{equation*}
We want our numerical solution to reproduce this behaviour, so we define the Dirichlet-to-Neumann (DtN) operator
\begin{align} \label{DtN}
    & T:H^{1/2}_{\alpha_0}(\Gamma_H) \to H^{-1/2}_{\alpha_0}(\Gamma_H), \\ \nonumber
    & (T\phi)(x_1) = i \sum_{n\in\mathbb{Z}} \phi_n \beta_n e^{i\alpha_nx_1}, \hspace{0.5cm} \text{for} \hspace{0.2cm} \phi \in H^{1/2}(\Gamma_H).
\end{align}
This operator maps the Dirichlet trace of $\phi \in H^{1/2}_{\alpha_0}(\Gamma_H)$ into its Neumann trace.

The following property holds for the DtN operator:
\begin{lem}
    The operator $T:H^{1/2}_{\alpha_0}(\Gamma_H) \to H^{-1/2}_{\alpha_0}(\Gamma_H)$ is continuous.
\end{lem}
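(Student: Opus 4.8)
The plan is to work entirely on the Fourier side. Since $\Gamma_H$ is the horizontal segment $x_2 = H$, $x_1 \in (0,2\pi)$, it admits the trivial parametrization $z(t) = (t,H)$, so the quasi-periodic Sobolev norm on $\Gamma_H$ reduces to a weighted $\ell^2$ norm of the quasi-periodic Fourier coefficients: writing $\phi = \sum_{n} \phi_n e^{i\alpha_n x_1}$, one has $\|\phi\|^2_{H^s_{\alpha_0}(\Gamma_H)} = \sum_{n}(1+\alpha_n^2)^s|\phi_n|^2$, in accordance with the definition of $H^s_{\alpha_0}[0,2\pi]$ and its dual $H^{-s}_{\alpha_0}[0,2\pi]$. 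Thus the whole statement becomes an assertion about a Fourier multiplier, and boundedness (which for a linear map is equivalent to continuity) is what I would establish.

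First I would read off the coefficients of $T\phi$. By the definition (\ref{DtN}), the $n$-th quasi-periodic Fourier coefficient of $T\phi$ is $i\beta_n\phi_n$, so that
\[
\|T\phi\|^2_{H^{-1/2}_{\alpha_0}(\Gamma_H)} = \sum_{n\in\mathbb{Z}} (1+\alpha_n^2)^{-1/2}\,|\beta_n|^2\,|\phi_n|^2 .
\]
Comparing this with $\|\phi\|^2_{H^{1/2}_{\alpha_0}(\Gamma_H)} = \sum_{n}(1+\alpha_n^2)^{1/2}|\phi_n|^2$, the desired continuity reduces to the existence of a constant $C$ with
\[
m_n := \frac{|\beta_n|^2}{1+\alpha_n^2} \le C^2 \qquad \text{for all } n\in\mathbb{Z},
\]
since then $\|T\phi\|_{H^{-1/2}_{\alpha_0}(\Gamma_H)} \le C\,\|\phi\|_{H^{1/2}_{\alpha_0}(\Gamma_H)}$ follows term by term.

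The remaining step, which is the real content, is the uniform bound on $m_n$. From the definition (\ref{beta}) of $\beta_n$, and using that $\varepsilon^+$ is real and positive, one checks that $|\beta_n|^2 = |k^2\varepsilon^+ - \alpha_n^2|$ in \emph{both} the propagating and the evanescent regime. I would then distinguish the two cases: there are only finitely many indices with $\alpha_n^2 < k^2\varepsilon^+$ (the propagating modes), for which $|\beta_n|^2 \le k^2\varepsilon^+$ is bounded; while for the remaining evanescent modes $|\beta_n|^2 = \alpha_n^2 - k^2\varepsilon^+ \le 1+\alpha_n^2$, giving $m_n \le 1$. A single estimate covering both cases is $|k^2\varepsilon^+ - \alpha_n^2| \le (k^2\varepsilon^+ + 1)(1+\alpha_n^2)$, whence $m_n \le k^2\varepsilon^+ + 1$ and one may take $C = (k^2\varepsilon^+ + 1)^{1/2}$.

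The main obstacle here is conceptual rather than computational: one must recognize that the apparently singular split definition of $\beta_n$ nonetheless yields $|\beta_n|^2 = |k^2\varepsilon^+ - \alpha_n^2|$ uniformly in $n$, so that $|\beta_n|$ grows like $|\alpha_n|$ and $T$ behaves like a first-order operator — exactly matching the loss of one half-derivative from $H^{1/2}$ to $H^{-1/2}$. Once this is observed the estimate is immediate, and the same computation simultaneously shows that $T$ does map $H^{1/2}_{\alpha_0}(\Gamma_H)$ into $H^{-1/2}_{\alpha_0}(\Gamma_H)$, so that the operator is well defined as stated.
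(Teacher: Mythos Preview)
Your proof is correct and follows essentially the same approach as the paper: compute $\|T\phi\|^2_{H^{-1/2}_{\alpha_0}(\Gamma_H)}$ on the Fourier side, reduce continuity to a uniform bound on the multiplier $|\beta_n|^2/(1+\alpha_n^2)$, and observe that this quotient is bounded in $n$. The paper uses the equivalent weight $(1+n^2)$ in place of your $(1+\alpha_n^2)$ and simply notes that the quotient is bounded, whereas you give a slightly more explicit constant by separating propagating and evanescent modes; the substance of the argument is the same.
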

\begin{proof}
    By straightforward computation (recall the definition of the $H^{s}_{\alpha_0}(\Gamma_H)$ in Theorem \ref{sobolev_line}), it holds
    \begin{equation*}
        \| T\phi \|^2_{H^{-1/2}_{\alpha_0}(\Gamma_H)} = \sum_{n \in \mathbb{Z}} (1+n^2)^{-1/2} |\beta_n|^2 |\phi_n|^2 \leq \sup_{n \in \mathbb{Z}} \frac{|\beta_n|^2}{1+n^2} \| \phi \|^2_{H^{1/2}_{\alpha_0}(\Gamma_H)}.
    \end{equation*}
    By definition of $\beta_n$, 
    \begin{equation*}
        \frac{|\beta_n|^2}{1+n^2} = \frac{k^2\varepsilon^+ - \alpha_n^2}{1+n^2} = \frac{k^2\varepsilon^+ - n^2 -2n\alpha_0 - \alpha_0^2}{1+n^2},
    \end{equation*}
    which is bounded in $n$.
\end{proof}

Using the Dirichlet to Neumann operator $T$, the problem can be restricted to the domain $\Omega$, i.e., given an incident plane wave $u^i$, seek an $\alpha_0$-quasi-periodic solution $u$ such that
\begin{equation} \label{bvp}
    \begin{cases}
        \Delta u + k^2 \varepsilon u = 0 & \hspace{0.5cm} \text{in} \hspace{0.1cm} \Omega, \\
        u = 0 & \hspace{0.5cm} \text{on} \hspace{0.1cm} \Gamma_{-H}, \\
        \frac{\partial(u-u^i)}{\partial \nu} - T(u-u^i) = 0 & \hspace{0.5cm} \text{on} \hspace{0.1cm} \Gamma_{H},
    \end{cases}
\end{equation}
where $\nu$ is the outward pointing normal on $\Gamma_H$.\\

We are now ready to derive the variational formulation of (\ref{bvp}). We consider the space
\begin{equation}
    V=\{ v \in H^1_{\alpha_0}(\Omega) : v=0 \hspace{0.2cm}\text{on} \hspace{0.2cm} \Gamma_{-H}\},
\end{equation}
where $H^1_{\alpha_0}(\Omega)$ is the $\alpha_0$-quasi-periodic Sobolev space defined in Definition \ref{sobolev}. Let $u \in V$ be a distributional solution of the Helmholtz problem (\ref{bvp}). Multiplying both sides of the Helmholtz equation by a test function $v\in V$ and integrating by parts, we get
\begin{equation*}
    \int_\Omega \left( \nabla u \cdot \nabla \overline{v} - k^2\varepsilon u\overline{v} \right) - \int_{\partial \Omega} \overline{v} \nabla u \cdot \nu = 0.
\end{equation*}

The integral over the lower boundary $\Gamma_{-H}$ is equal to zero since we have Dirichlet boundary conditions, while the integral over the left and right boundaries cancel for quasi-periodicity. This yields
\begin{equation}
    \int_\Omega \left( \nabla u \cdot \nabla \overline{v} - k^2\varepsilon u\overline{v} \right) d\mathbf{x} - \int_{\Gamma_H} \overline{v} \nabla u \cdot \nu \hspace{0.1cm} ds = 0.
\end{equation}
Now we use the Dirichlet-to-Neumann operator to replace the normal derivative of the scattered field. Replacing $u$ by $u^i+u^s$, using $Tu^s$ for $ \nabla u^s \cdot \nu$ and replacing $u^s$ again by $u - u^i$ yields
\begin{equation}
    \int_\Omega \left( \nabla u \cdot \nabla \overline{v} - k^2\varepsilon u\overline{v} \right) d\mathbf{x} - \int_{\Gamma_H} Tu \hspace{0.1cm} \overline{v} \hspace{0.1cm} ds = \int_{\Gamma_H} \left( \nabla u^i \cdot \nu -Tu^i \right) \overline{v} \hspace{0.1cm} ds.
\end{equation}
The left hand side of the equation above defines the sesquilinear form
\begin{equation} \label{bilinear}
     b_\varepsilon(u,v) := \int_\Omega \left( \nabla u \cdot \nabla \overline{v} - k^2\varepsilon u\overline{v} \right) d\mathbf{x} - \int_{\Gamma_H}  Tu \hspace{0.1cm} \overline{v} \hspace{0.1cm} ds.
\end{equation}
For the right-hand side of the variational problem, we observe that on $\Gamma_H$
\begin{equation*}
    \frac{\partial u^i}{\partial \nu} - T u^i = 2i \beta_0 u^i,
\end{equation*}
where $\beta_0 = k \sin \theta$ and $\nu$ is the outer normal to $\Omega$. It follows that the weak formulation of the problem is: find $u \in V$ such that
\begin{equation} \label{weak}
    b_\varepsilon(u,v) = \int_{\Gamma_H} 2i\beta_0 u^i \hspace{0.03cm} \overline{v} \hspace{0.1cm} ds \hspace{0.5cm} \forall \hspace{0.03cm} v \in V.
\end{equation}

We already proved that the DtN operator $T$ is a bounded mapping from $H^{1/2}_{\alpha_0}(\Gamma_H)$ into $H^{-1/2}_{\alpha_0}(\Gamma_H)$. Also, $T$ is bijective if and only if $\beta_n \neq 0$ for all $n \in \mathbb{Z}$. From now on we interpret the integral $\int_{\Gamma_H}  Tu \hspace{0.1cm} \overline{v} \hspace{0.1cm} ds$ for $u, v \in V$ as the dual bracket $\langle Tu,v \rangle_{1/2}$ (since $Tu \in H^{-1/2}_{\alpha_0}(\Gamma_H)$ and $v_{| \Gamma_H} \in H^{1/2}_{\alpha_0}(\Gamma_H)$ by the trace theorem).

We can split the bilinear form a from (\ref{bilinear}) into two parts $a = a_0 + a_K$ with
\begin{align*}
    & a_0(u,v) := \int_\Omega \left( \nabla u \cdot \nabla \overline{v} + u\overline{v} \right) d\mathbf{x} - \int_{\Gamma_H} T_0u  \hspace{0.1cm}  \overline{v} \hspace{0.1cm} ds, \\
    & a_K(u,v) := -(k^2\varepsilon+1)\int_\Omega  u\overline{v} \hspace{0.1cm} d\mathbf{x} + \int_{\Gamma_H} (T_0-T)u \hspace{0.1cm} \overline{v} \hspace{0.1cm} ds,
\end{align*}
where
\begin{equation*}
    T_0 \phi := - \sum_{n\in\mathbb{Z}} |\alpha_0+n| \psi_n e^{i\alpha_n \cdot}, \hspace{0.5cm} \text{for} \hspace{0.2cm} \psi = \sum_{n\in\mathbb{Z}} \psi_n e^{i\alpha_n \cdot}.
\end{equation*} 
Then $-T_0$ is nonnegative and, therefore, $a_0$ is coercive in $V$. By the theorem of Lax-Milgram there exists an isomorphism $L$ from $V$ onto itself with $a_0(v, w) = (Lv, w)_{H^1(\Omega)}$ for all $v,w \in V$. By the Riesz theorem there exists $r\in V$ and a bounded operator $K$ from $V$ into itself with $a_K(v, w) = (Kv, w)_{H^1(\Omega)}$ for all $v,w \in V$ and $\int_{\Gamma_H} 2i\beta_0 u^i \hspace{0.03cm} \overline{v} \hspace{0.1cm} ds = (r,v)_{H^1(\Omega)}$ for $v \in V$. Furthermore, the operator $T- T_0$ is compact since $\beta_n = i \hspace{0.05cm} |n+\alpha_0| + \mathcal{O}(\frac{1}{|n|})$ for $|n|\to \infty$. This implies that also $K$ is compact (see \cite{Kirsch}).

The variational equation (\ref{weak}) can then be written in the form $Lu + Ku = 0$ with isomorphism $L$ and compact $K$. It is easily seen that any solution $u\in V$ of $Lu + Ku = 0$ is a classical solution of the scattering problem if it is extended by the solution of the Dirichlet problem with boundary data $u$ into the region $(0, 2\pi) \times (H, \infty)$. It follows that, under appropriate hypothesis on $\varepsilon$, $L+K$ is one-to-one. Fredholm's alternative then again yields the existence of a unique solution of the variational equation (\ref{weak}) in $V$. This construction can be made for other boundary conditions as well.

\section{Well-posedness and Stability} \label{regularity}
There are various formulations of the periodic grating problem. For example, one can solve this problem for the inhomogeneous Helmholtz equation $\Delta u + k^2 \varepsilon u = f$, with $f \in L^2(\Omega)$, as it is done in \cite{CIVILETTI2020112478}. This difference is given by the choice to approximate the scattered field $u^s$ instead of the total one. In our problem, we choose to solve the homogeneous Helmholtz problem since the PWDG method is suitable only for this type of equation.  Another difference in the scattering problem can be found in the boundary conditions: a common choice is to consider a so-called transparent domain, which is simply a truncated vertical stripe with a Dirichlet to Neumann boundary condition on both the upper and the lower truncation of the domain; obviously, the definition of the lower DtN operator will change, as we want the total wave to point downward. Under $\Gamma_{-H}$ the domain is assumed to have constant relative permittivity $\varepsilon^-$, which can be different from $\varepsilon^+$.

Another substantial difference in the formulations is in the assumptions on the relative permittivity $\varepsilon$. In some cases this is considered to be a function that changes in a continuous way, while in other works the value of $\varepsilon$ is constant in every subdomain of $\Omega$, as we assume in our numerical tests.

The boundary value problem in \cite{CIVILETTI2020112478} is the following: given an incident plane wave $u^i$, seek an $\alpha_0$-quasi-periodic solution $u$ such that
\begin{equation} \label{bvp_civil}
    \begin{cases}
        \Delta u + k^2 \varepsilon u = f & \hspace{0.5cm} \text{in} \hspace{0.1cm} \Omega, \\
        \frac{\partial u}{\partial \nu} - T^- u = 0  & \hspace{0.5cm} \text{on} \hspace{0.1cm} \Gamma_{-H}, \\
        \frac{\partial u}{\partial \nu} - T^+ u = 0 & \hspace{0.5cm} \text{on} \hspace{0.1cm} \Gamma_{H},
    \end{cases}
\end{equation}
where $T^+, T^-$ are the two DtN operators defined respectively on $\Gamma_H$ and $\Gamma_{-H}$.

A fundamental property that we want assume on $\varepsilon$ is the so-called \textit{non trapping condition}. This property guarantees that the reflected wave does not remain "trapped" by the scatterer and ensures the uniqueness of the solution.

In \cite{CIVILETTI2020112478}, under the assumption $\Re(\varepsilon)>0$ and $\Im(\varepsilon)=0$, the following theorem is proved. We recall that these results are referred to a different boundary value problem.
\begin{thm} \label{thm:stima}
    Assume that $\varepsilon \in C^{(1,1)}(\Omega_j)$ for all $ j= 1, 2, ... , I + 1$, and the non-trapping conditions
    \begin{equation} 
        \frac{\partial \varepsilon}{\partial x_2} \geq 0, \hspace{0.5cm} \llbracket \varepsilon \rrbracket_{\Gamma_j} > 0, \hspace{0.5cm} \text{and} \hspace{0.3cm} \Re(\varepsilon^+-\varepsilon) \geq 0 \hspace{0.3cm} \text{on} \hspace{0.3cm} \Gamma_H
    \end{equation}
    hold for all $j = 1, 2, ... , I + 1$. Further, assume that $\Re(\varepsilon^\pm) \geq 0$ and $\Im(\varepsilon^\pm) \geq 0$ and $\varepsilon$ is real in $\Omega$. Then, for $f\in L^2(\Omega)$, there exists a unique solution $u \in H^1_\alpha(\Omega)$ of the variational problem (\ref{bvp_civil}). 
\end{thm}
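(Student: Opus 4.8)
The strategy is the classical one for Helmholtz-type scattering problems: reduce existence to uniqueness by a Fredholm argument, then prove uniqueness by combining an energy identity with a Rellich--Morawetz multiplier estimate that exploits the three non-trapping conditions. First I would write the variational formulation of (\ref{bvp_civil}): multiplying the equation by a test function $\overline{v}\in H^1_\alpha(\Omega)$, integrating by parts, and replacing the normal derivatives on $\Gamma_{\pm H}$ by $T^\pm u$ yields a sesquilinear form whose leading part is $\int_\Omega \nabla u\cdot\nabla\overline{v}\,d\mathbf{x}$, corrected by the two DtN boundary integrals. Exactly as in the splitting $a=a_0+a_K$ of the previous section, this form decomposes as $L+K$, where $L$ is an isomorphism of $H^1_\alpha(\Omega)$ (coercivity coming from the sign of the leading DtN parts $T_0^\pm$) and $K$ is compact (because $T^\pm-T_0^\pm$ is compact, as $\beta_n=i\,|n+\alpha_0|+\mathcal{O}(1/|n|)$, and the embedding $H^1\hookrightarrow L^2$ is compact). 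Fredholm's alternative then gives existence and uniqueness for every $f\in L^2(\Omega)$ as soon as the homogeneous problem admits only the trivial solution, so the whole theorem reduces to a uniqueness statement.

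For uniqueness I would set $f=0$ and test the weak form with $v=u$. Since $\varepsilon$ is real in $\Omega$, the volume contribution $\int_\Omega(|\nabla u|^2-k^2\varepsilon|u|^2)\,d\mathbf{x}$ is real, so taking imaginary parts kills it and leaves only the two DtN boundary pairings. Expanding $u$ in Fourier modes on $\Gamma_{\pm H}$ and using $T^\pm\phi=i\sum_n\phi_n\beta_n^\pm e^{i\alpha_n x_1}$, one checks that $\Im\langle T^\pm u,u\rangle$ reduces to a sum over the propagating modes (those $n$ with $\beta_n^\pm$ real) of the nonnegative quantities $\beta_n^\pm|u_n^\pm|^2$, together with the nonnegative dissipation produced by $\Im\varepsilon^\pm\ge 0$. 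As these contributions all carry the same sign and their total vanishes, every propagating Fourier mode of $u$ must vanish on both $\Gamma_H$ and $\Gamma_{-H}$.

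The decisive and genuinely delicate step is to propagate this boundary information into $\Omega$ via a Rellich identity, using the multiplier $\partial u/\partial x_2$. Evaluating $2\Re\int_\Omega(\Delta u+k^2\varepsilon u)\,\overline{\partial u/\partial x_2}\,d\mathbf{x}=0$ and integrating by parts, the Laplacian term becomes boundary integrals on $\Gamma_{\pm H}$ of $|\partial u/\partial x_2|^2-|\partial u/\partial x_1|^2$ (the lateral boundary terms cancel by quasi-periodicity), while the potential term produces the volume integral $-k^2\int_\Omega(\partial\varepsilon/\partial x_2)|u|^2\,d\mathbf{x}$, the interface sum $k^2\sum_j\int_{\Gamma_j}\llbracket\varepsilon\rrbracket_{\Gamma_j}|u|^2\,ds$, and boundary terms on $\Gamma_{\pm H}$. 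The three non-trapping hypotheses are exactly what make every piece sign-definite: $\partial\varepsilon/\partial x_2\ge 0$ controls the volume term, $\llbracket\varepsilon\rrbracket_{\Gamma_j}>0$ controls the interface sum, and $\Re(\varepsilon^+-\varepsilon)\ge 0$ together with the vanishing of the propagating modes controls the top boundary. Combining this identity with the energy estimate of the previous step forces $\nabla u\equiv 0$, and the homogeneous boundary conditions then yield $u\equiv 0$.

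The main obstacle is this Rellich step, for two reasons. First, the interface bookkeeping: one must orient the unit normal on each $\Gamma_j$ consistently with the definition of $\llbracket\cdot\rrbracket_{\Gamma_j}$ so that the positive jumps enter with the favourable sign, and it is precisely the $C^{1,1}$ regularity of $\varepsilon$ inside each subdomain $\Omega_j$ that legitimises integrating by parts subdomain by subdomain and treating $\partial\varepsilon/\partial x_2$ as a bounded function. Second, the nonlocal DtN terms on $\Gamma_{\pm H}$ do not localise the way classical Neumann data would; to control them I would substitute $\partial u/\partial x_2=T^\pm u$ there and verify in Fourier modes that, once the propagating modes are gone, the remaining evanescent contributions carry the correct sign. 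Balancing the several boundary, interface and volume contributions so that they all reinforce one another, rather than cancel, is where the real work of the proof lies.
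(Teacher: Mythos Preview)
The paper does not give its own proof of this theorem: it is simply quoted from \cite{CIVILETTI2020112478} (``In \cite{CIVILETTI2020112478}, under the assumption $\Re(\varepsilon)>0$ and $\Im(\varepsilon)=0$, the following theorem is proved''), with no proof environment following. So there is nothing in the paper to compare your argument against.

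That said, your sketch is the standard route and is essentially the one taken in the cited reference: Fredholm reduction to uniqueness via the splitting $a=a_0+a_K$ (this is exactly the argument the paper reproduces just above, for the Dirichlet-DtN problem), followed by an energy identity to eliminate the propagating Rayleigh modes, and then a Rellich identity with the multiplier $\partial u/\partial x_2$ in which the three non-trapping conditions $\partial\varepsilon/\partial x_2\ge 0$, $\llbracket\varepsilon\rrbracket_{\Gamma_j}>0$, $\Re(\varepsilon^+-\varepsilon)\ge 0$ furnish the sign-definiteness of the volume, interface and boundary contributions respectively. Your identification of the two genuine technical points---the subdomain-by-subdomain integration by parts (requiring $\varepsilon\in C^{1,1}(\Omega_j)$) and the Fourier-mode bookkeeping of the DtN boundary terms once the propagating modes are gone---is accurate. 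One small caution: in the Rellich step the multiplier actually used in \cite{CIVILETTI2020112478} is $(x_2-c)\partial u/\partial x_2$ for a suitable shift $c$, rather than the bare $\partial u/\partial x_2$; the extra weight is what produces a coercive $\|\nabla u\|^2$ term and allows the argument to close without an additional absorption hypothesis.
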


\begin{rem}
    When the non-trapping condition is not verified, we are not guaranteed to find a unique solution of (\ref{bvp_civil}). In Section 5 of \cite{Bonnet} many non uniqueness examples are presented. In our analysis we will assume that this uniqueness property is satisfied.
\end{rem}

\begin{rem}
    Note that in the classical case where $\varepsilon$ is piecewise constant, the first part of the non-trapping conditions is always verified, since $\frac{\partial \varepsilon}{\partial x_2} = 0$.\\
\end{rem}

For our boundary value problem of interest (\ref{bvp}) with Dirichlet boundary conditions on $\Gamma_{-H}$, we will say that $\varepsilon$ verifies the \textit{non-trapping conditions} if the problem is well-posed and it holds a stability estimate for the solution $u$, i.e. if it exists a constant $C > 0$ such that
\begin{equation} \label{notrap}
    \| u \|_{H^1(\Omega)} \leq C \| u^i \|_{H^{1/2}(\Gamma_H)},
\end{equation}
for every possible quasi-periodic incident wave $u^i$.

In the following results, we will assume $\varepsilon$ is non-trapping, although we don't know exactly what properties it has to verify.

\section{Truncated Boundary Value Problem}
In practical computations, one needs to truncate the infinite series of the DtN operator (\ref{DtN}) to obtain an approximate mapping written as a finite sum
\begin{equation} \label{DtN_trunc}
    (T_M \phi)(x_1) = i \sum_{n=-M}^M \phi_n \beta_n e^{i\alpha_nx_1}, \hspace{0.5cm} 
\end{equation}
for all $\phi \in H^{1/2}_{\alpha_0}(\Gamma_H) $. Here $M$ is usually an integer greater than $k^2\varepsilon^+$ if $k^2\varepsilon^+$ is real.

The boundary value problem (\ref{bvp}) is replaced by the following modified boundary value problem with a truncated DtN map: Find $u^M \in H^1_{\alpha_0}(\Omega)$ such that
\begin{equation} \label{bvp_tr}
    \begin{cases}
        \Delta u^M + k^2 \varepsilon u^M = 0 & \hspace{0.5cm} \text{in} \hspace{0.1cm} \Omega, \\
        u^M = 0 & \hspace{0.5cm} \text{on} \hspace{0.1cm} \Gamma_{-H}, \\
        \frac{\partial(u^M-u^i)}{\partial \nu} - T_M(u^M-u^i) = 0 & \hspace{0.5cm} \text{on} \hspace{0.1cm} \Gamma_{H}.
    \end{cases}
\end{equation}

\begin{rem}
    It is not guaranteed that the good properties of (\ref{bvp}), such as the uniqueness of the solution, are maintained by the truncated problem. In our analysis, we will assume that the number of Fourier modes $M$ that we use to approximate the DtN operator is large enough to guarantee these properties also for (\ref{bvp_tr}).
\end{rem}

\begin{rem}
    Lemma 4.5 of \cite{FEMQP} proves a good approximation property of the truncated DtN operator $T_M$ for the full operator $T$: in particular, it shows  a lower bound on the number of Fourier modes that guarantees that the error derived from the approximation of the DtN operator and the numerical scheme is small enough.
\end{rem}

\section{FEM with DtN boundary conditions}
The finite element method for the Helmoltz equation and the diffraction grating problem with DtN boundary conditions has been studied in various works, such as \cite{Bao} or \cite{FEMQP}.\\
Here we propose a simple FEM-P1 method, which is mainly used to provide a benchmark to test the TDG developed in Section \ref{sec_tdg}.\\

Let $\bigtau_h$ be a regular triangulation on the domain $\Omega$. Every triangle $K \in \bigtau_h$ is considered a closed set. To define a finite element space whose functions are quasi-periodic in the $x_1$ direction, we require that if $(0,x_2)$ is a node on the left boundary, then $(2\pi, x_2)$ also is a node on the right boundary, and vice versa. Let $V_h \subset V$ denote a conforming finite element space, that is,
\begin{equation*}
    V_h := \{ v_h \in C(\overline{\Omega}): v_h |_K \in P_p(K) \hspace{0.2cm} \forall K \in \bigtau_h,\hspace{0.2cm} v_h(0,x_2) = e^{-i\alpha_0 2\pi}v_h(2\pi,x_2), \hspace{0.2cm} \forall x_2 \}
\end{equation*}
where $p$ is a positive integer and $P_p(K)$ is the set of polynomials of degrees $p$ on $K$.\\
The finite element approximation to problem (\ref{weak}) reads as follows: Find $u_h \in V_h$ such that
\begin{equation} \label{discrete}
    b_\varepsilon^M(u_h^M,v_h) =  \int_{\Gamma_H} 2i\beta_0 u^i \hspace{0.03cm} \overline{v_h} \hspace{0.1cm} dx_1, \hspace{0.5cm} \forall \hspace{0.03cm} v_h \in V_h.
\end{equation}
where the sesquilinear form $b_\varepsilon^M(\cdot,\cdot)$ is defined as
\begin{equation}
    b^M_\varepsilon(u,v) := \int_\Omega \left( \nabla u \cdot \nabla v - k^2\varepsilon u\overline{v} \right) dx - \int_{\Gamma_H} T_M u \hspace{0.05cm} \overline{v} \hspace{0.1cm} dx_1,
\end{equation}
where $T_M$ is the truncated Dirichlet to Neumann operator defined in (\ref{DtN_trunc}).

\subsection{Numerical implementation}
We implement the discrete problem in MATLAB: we make use of polynomials of degree one. The basis functions are the usual hat functions with degrees of freedom associated to vertices of the triangulation. The problem $(\ref{discrete})$ can be written as a linear system $A \mathbf{u} = \mathbf{b}$ 
The matrix $A$ can be written as the sum of three parts, $A=A_S+k^2\varepsilon A_M + A_{DtN}$, the first two being respectively the usual stiffness and mass matrix of the finite element method, with the only difference that when we compute the matrix entries for the basis function associated to the right side of $\Omega$, we have to multiply for the quasi-periodicity constant $e^{-i\alpha_0 2\pi}$ (this is done to ensure the quasi-periodicity of the numerical solution). The matrix $ A_{DtN}$ is associated to the basis functions defined on the vertices on $\Gamma_H$ and contains the value of the integrals $\int_{\Gamma_H} T_Mu_h \hspace{0.05cm} \overline{v_h} \hspace{0.05cm} dx_1$. Note that the function $T_M u_h$ is defined on all $\Gamma_H$, while $u_h$ is non-zero only on a part of the boundary, so every basis function associated to the vertices on $\Gamma_H$ communicates with every other basis function on $\Gamma_H$.

For what concerns the right hand side of the system, the only nonzero entries of the vector $\mathbf{b}$ are the ones associated to vertices on $\Gamma_H$.

For the computation of the line integrals $\int_{\Gamma_H}  T_Mu_h \hspace{0.05cm} \overline{v_h} \hspace{0.05cm} dx_1$ and the right-hand side $\int_{\Gamma_H} 2i\beta_0 u^i \hspace{0.03cm} \overline{v_h} \hspace{0.1cm} dx_1$, we make use of a Gauss-Legendre quadrature rule. In our experiments we choose $M=100$, so that we use $2M+1=201$ Fourier modes to approximate the DtN operator.

Here and in the following, to create the domain triangulation, we will use the \textit{distmesh} MATLAB package provided in \cite{distmesh}.

\subsection{Numerical Experiments}
To test our method, we consider a simple model problem. We divide the domain $\Omega=[0,2\pi]\times [-H,H]$ in two different regions with different relative permittivity $\varepsilon_1, \varepsilon_2$ by the straight line $x_2=0$. We consider $\varepsilon_1 = 1$ in the region $\{ x_2 > 0 \}$ and $\varepsilon_2 > 1$ in $\{ x_2 < 0 \}$. A plane wave is incident with an angle $\theta \in (-\pi,0)$ on the straight line.\\
In this simple case, we can derive the exact solution of the scattering problem: we know that the solution $u$ will have two different expressions, depending on the region of $\Omega$.
We will have the following expression:
\begin{equation} \label{sol}
    \begin{cases} u(x_1,x_2) = e^{ik(x_1 \cos \theta + x_2 \sin \theta)} + Re^{ik(x_1 \cos \theta - x_2 \sin \theta)} & x_2>0\\
     u(x_1,x_2) = T_1e^{ik(x_1 \cos \theta - x_2 \sqrt{\varepsilon_2-\cos^2\theta})} + T_2e^{ik(x_1 \cos \theta + x_2 \sqrt{\varepsilon_2-\cos^2\theta})} & x_2<0 \end{cases}
\end{equation}
In the first region $\{ x_2>0 \}$, the exact solution is the sum of the incident wave and the wave reflected by the line $x_2=0$, while in the second region $\{ x_2<0 \}$ the wave angle changes due to the different value of $\varepsilon_2$. To find the coefficients $R,T_1,T_2$ we impose the continuity of the function and its normal derivative and the Dirichlet condition on $\Gamma_{-H}$. We refer as $u_+$ for the expression of the solution $u$ in the upper region $\{ x_2>0 \}$, and as $u_-$ for its expression in $\{ x_2 < 0 \}$.
\begin{equation*} 
    \begin{cases}
        u_{+}(x_1,0) = u_{-}(x_1,0), \\
        \frac{\partial}{\partial x_2}u_{+}(x_1,0) = \frac{\partial}{\partial x_2}u_{-}(x_1,0), \\
        u_{+}(x_1,-H) = 0.
    \end{cases} \Longrightarrow \hspace{0.3cm}
    \begin{cases}
        1 + R = T_1 + T_2 \\
        (1 - R) \sin \theta = (T_2 - T_1) \sqrt{\varepsilon_2 - \cos^2 \theta} \\
        T_1e^{ikH \sqrt{\varepsilon_2-\cos^2\theta}} + T_2e^{-ikH\sqrt{\varepsilon_2-\cos^2\theta}} = 0
    \end{cases}
\end{equation*}
This gives us a linear system that we can solve in MATLAB.
\begin{example}
    We first consider a problem with $\varepsilon_1=1$ and $\varepsilon_2=(1.27+0.1i)^2$. The incident angle is $\theta = -\pi / 3$ and the wavenumber is $k=5$. In figures \ref{fig:QP_P1} and \ref{fig:QP_P1_err} we can see the results of the numerical experiments. We decrease the value of the mesh width $h$ and notice that the $L^2$ error against the exact solution decreases with a rate of approximately $h^2$.
\end{example}
\begin{figure}[]
    \centering
    \includegraphics[width=.85\textwidth]{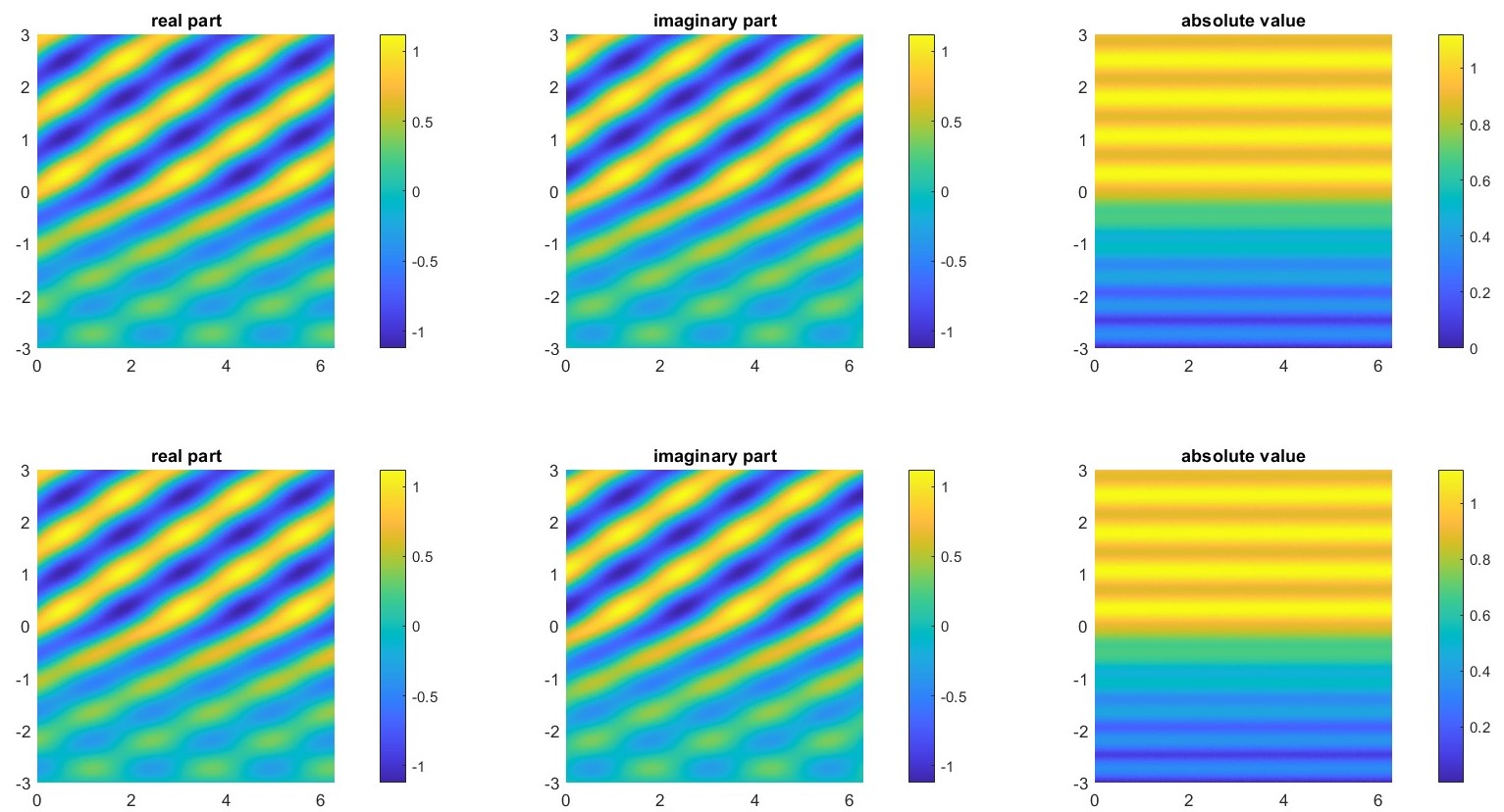}
    \caption{Plots of the numerical solution (top row) against the exact solution (bottom row)}
    \label{fig:QP_P1}
\end{figure}
\begin{figure}[]
    \centering
    \includegraphics[width=.65\textwidth]{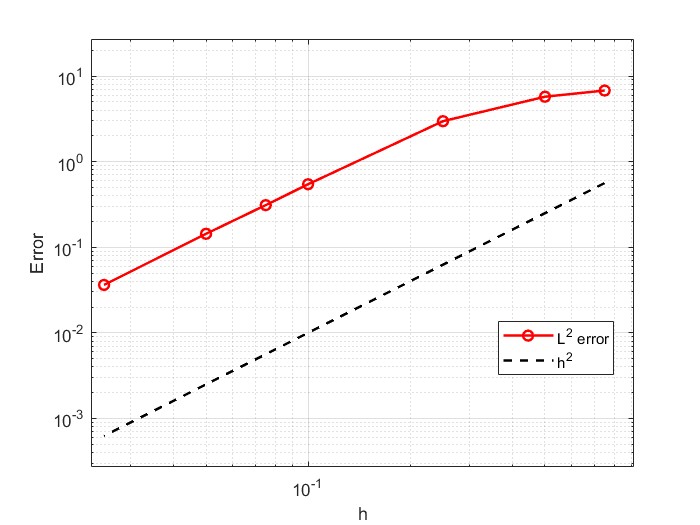}
    \caption{The errors in $L^2(\Omega)$ norm for different values of $h$}
    \label{fig:QP_P1_err}
\end{figure}
\begin{example} \label{examp}
    Another simple problem we can consider is based on the same domain $\Omega$ with different values of relative permittivity $\varepsilon_1=1$ and $\varepsilon_2=(1.27+0.2i)^2$, incident wave angle $\theta = -\pi / 4$ and wavenumber $k=5$. In figures \ref{fig:QP_P1_p4} and \ref{fig:QP_P1_p4_err} we can see the results of the numerical experiments. Here we can see the effect of the stronger absorption value of $\varepsilon_2$, as we have that the absolute value of the field rapidly decreases in the lower half of the domain. In the error plot we observe the same convergence rate as before.
\end{example}
\begin{figure}[]
    \centering
    \includegraphics[width=.85\textwidth]{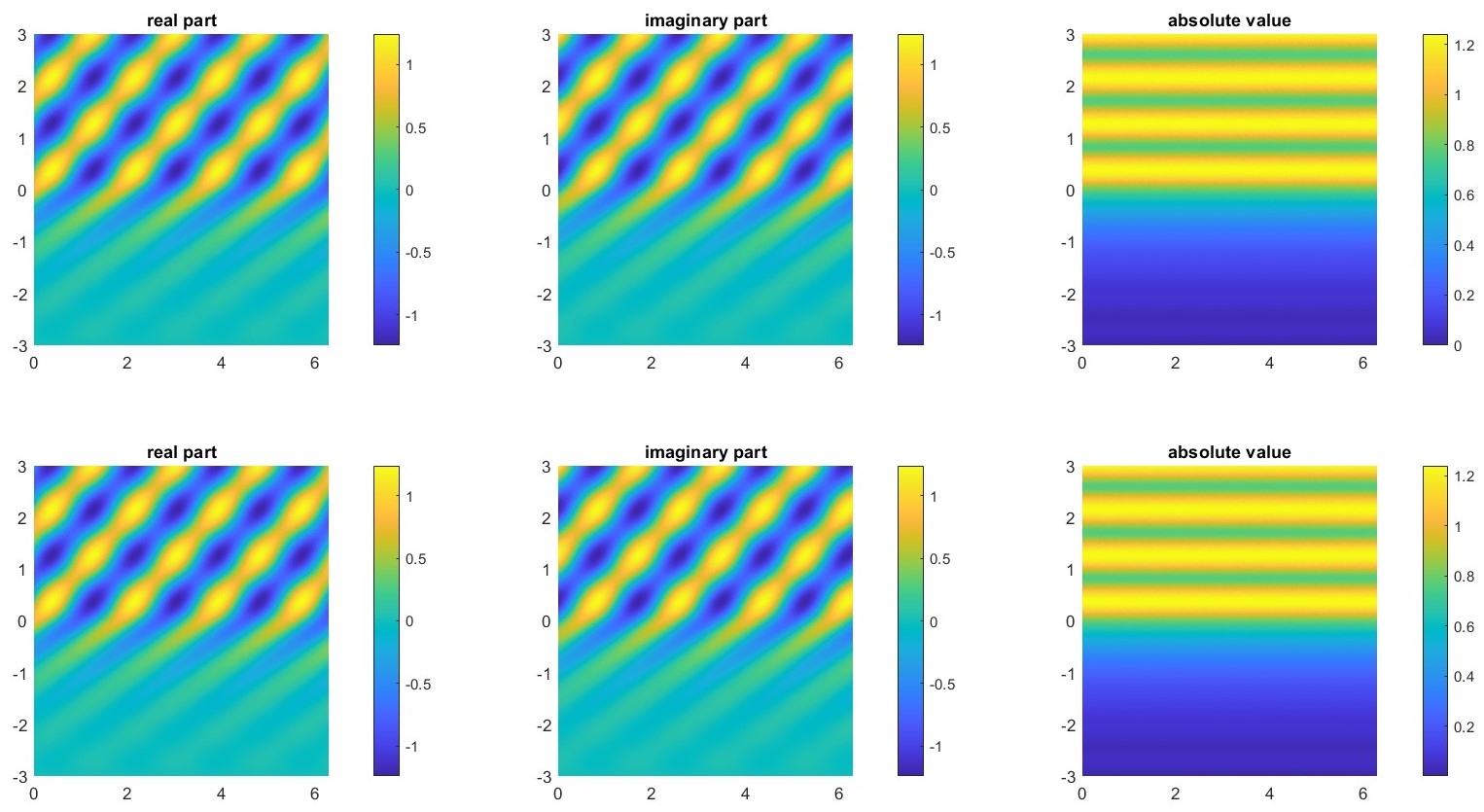}
    \caption{Plots of the numerical solution (top row) against the exact solution (bottom row)}
    \label{fig:QP_P1_p4}
\end{figure}
\begin{figure}[]
\centering
\subfloat[]
{\includegraphics[width=.41\textwidth]{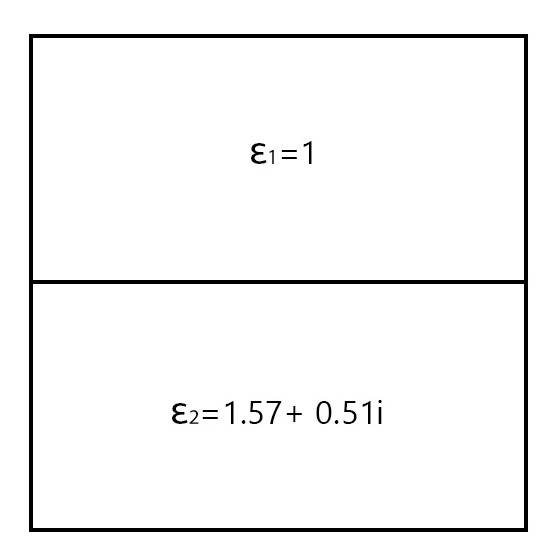}} \quad
\subfloat[]
{\includegraphics[width=.56\textwidth]{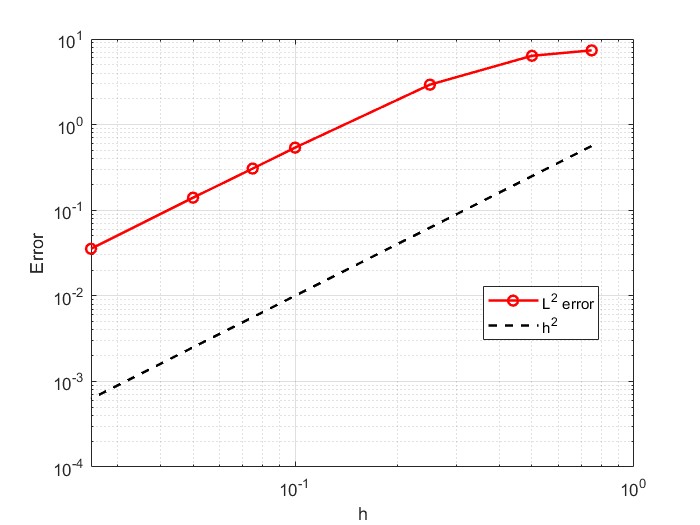}} \quad
\caption{The domain for Example \ref{examp} and the errors in $L^2(\Omega)$ norm for different values of $h$ }
\label{fig:QP_P1_p4_err}
\end{figure}

\begin{example} \label{ex:quad}
    We consider the domain $\Omega= [0,2\pi] \times [-2,2]$ displayed in Figure \ref{fig:quad}, divided by a piecewise constant line in two regions with different relative permittivity $\varepsilon=1$ and $\varepsilon_2 = (1.27+0.25i)^2$. A plane wave is incident with angle $\theta= -\pi / 4$ and wavenumber $k=2$. In this case we can't refer to an exact solution.
    
    In Figure \ref{fig:quad_plot} we display the plot of the numerical solution for decreasing values of $h$. The results are similar to the one obtained in \cite{Bao}.
\end{example}
\begin{figure}[]
    \centering
    \includegraphics[width=.51\textwidth]{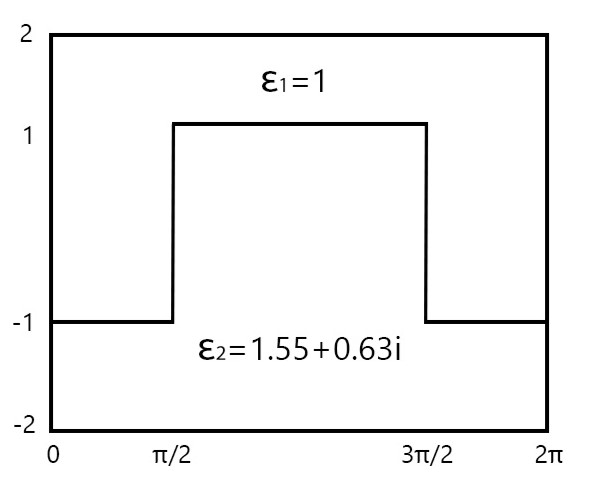}
    \caption{The domain of Example \ref{ex:quad}}
    \label{fig:quad}
\end{figure}
\begin{figure}[]
    \centering
    \includegraphics[width=.99\textwidth]{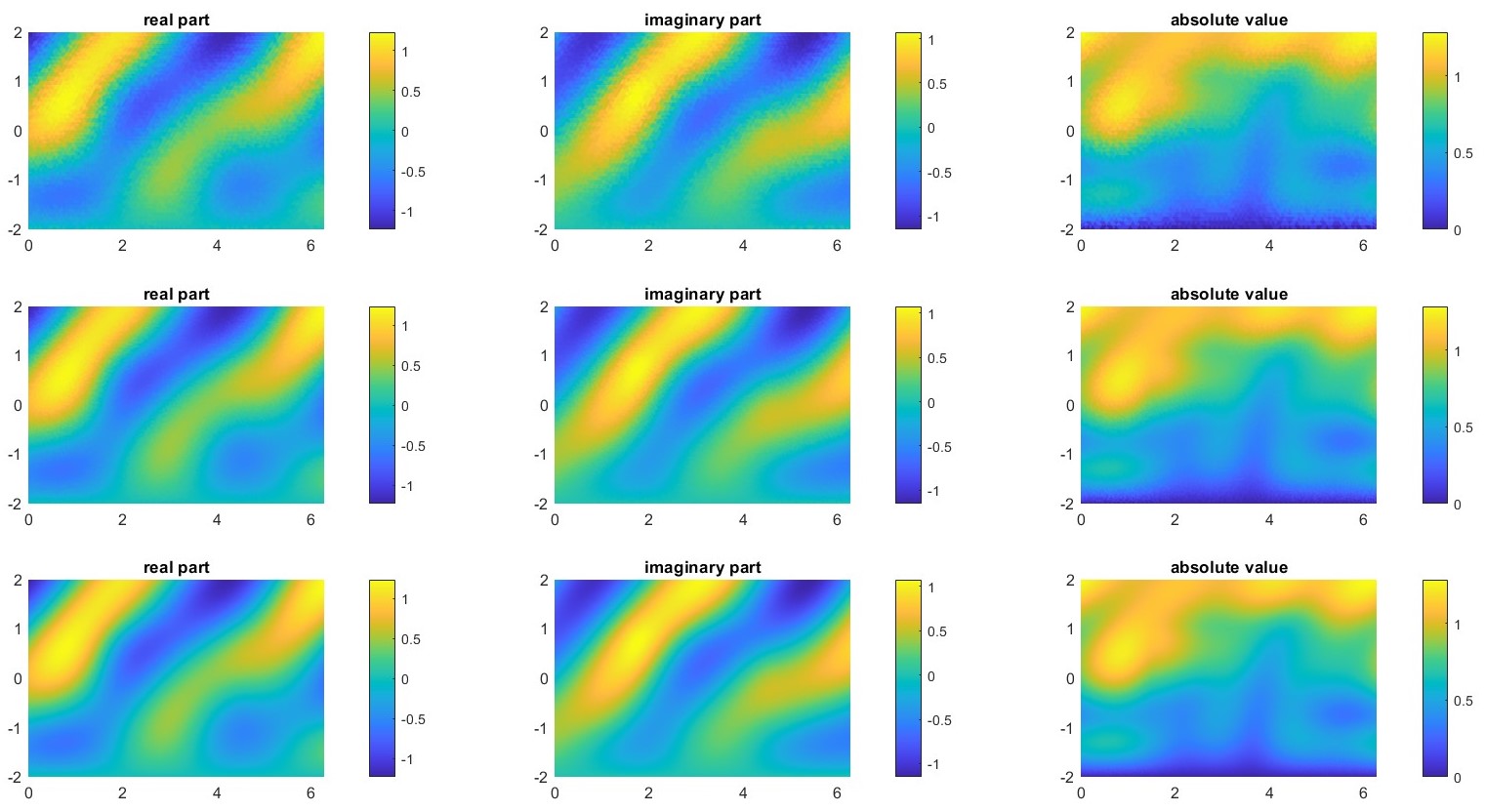}
    \caption{The numerical solution of Example \ref{ex:quad} for $h=0.1$, $h=0.05$ and $h=0.025$ respectively}
    \label{fig:quad_plot}
\end{figure}

\chapter{Trefftz Discontinuous Galerkin Methods}

In this Chapter we describe the Trefftz-DG methods for impedance boundary value problems on bounded domains, following \cite{Survey} and \cite{PWDG}. For the extension to the case with piecewise constant relative permittivity we follow \cite{Hoearth}. We will then extend this method to quasi-periodic problem in unbounded domains in Chapter \ref{ch4}.

\section{Trefftz Methods}
A Trefftz method is a volume-oriented discretisation scheme, for which all trial and test functions are solutions of the PDE, when restricted to any element of a given mesh. The name comes from the work of Erich Trefftz, dating back to 1926, where this idea was applied to the Laplace equation.\\
Trefftz methods are related to both finite element (FEM) and boundary element methods (BEM). With the former they have in common that they provide a discretisation in the volume. With the latter they share some characteristics such as the need of integration on lower-dimensional manifolds only.\\
Trefftz methods possess several distinctive features that set them apart from traditional numerical methods: they provide an analytical representation of the solution in terms of Trefftz functions, allowing for a direct assessment of the global behavior of the field and offer the potential for high accuracy even with a relatively small number of degrees of freedom, leading to significant computational savings.

There are many different variations and extensions of Trefftz methods: the Trefftz-discontinuous Galerkin (TDG), the ultra weak variational formulation (UWVF), the discontinuous enrichment method (DEM), the variational theory of complex rays (VTCR) and the wave based method (WBM). We will only focus on the first method, and in particular we will explore the aspects of Plane Wave Discontinuous Galerkin (PWDG) methods.\\

\section{Trefftz-DG method for impedance BVPs} \label{dg}
Trefftz Discontinuous Galerkin (TDG) methods are a class of Trefftz-type methods for the spatial discretization of boundary value problems. We rely on a simple model boundary value problem (BVP) for the Helmholtz equation and derive the general DG formulation. Let $\Omega \subset \mathbb{R}^n$, $n=2,3$, be a bounded, Lipschitz, connected domain, with $\partial\Omega=\Gamma_D \cup \Gamma_R$, where $\Gamma_D$ and $\Gamma_R$ are disjoint components of $\partial\Omega$. We assume $\Gamma_R$ to be non-empty, while $\Gamma_D$ could also be empty. We denote by $\mathbf{n}$ the outward-pointing unit normal vector field on $\partial\Omega$. We consider the Robin–Dirichlet BVP:
\begin{equation} \label{eqn: Helmholtz}
    \begin{cases}
        \hspace{0.1cm}-\Delta u - \kappa^2u = 0 &  \text{in} \hspace{0.1cm}\Omega, \\
        \hspace{0.1cm}u=g_D & \text{on} \hspace{0.1cm} \Gamma_D, \\
        \hspace{0.1cm}\nabla u \cdot n - i\kappa u = g_R  & \text{on} \hspace{0.1cm} \Gamma_R.
    \end{cases}
\end{equation}
Here $\kappa = k \sqrt{\varepsilon} \in \mathbb{C}$, where $k>0$ is the wave number, that is assumed to be constant, and $\varepsilon$ is the relative permittivity, which value may vary inside of $\Omega$. The functions $g_D$ and $g_R$ are the Dirichlet and impedance boundary data and $i$ is the imaginary unit. We assume that $\Omega$, $\varepsilon$, $g_D$ and $g_R$ are such that $u\in H^{3/2+s}(\Omega)$, for some $s > 0$. Typically, $\Gamma_D$ represents the boundary of the scatterer, and $\Gamma_R$ stands for an artificial truncation of the unbounded region where waves propagate.

Note that one can consider other boundary value problems, such as Neumann boundary conditions on $\partial \Omega$ or mixed conditions as in \cite{Survey}. For now, we will only consider the model problem (\ref{eqn: Helmholtz}).

We introduce a finite element partition $\bigtau_h = \{ K \}$ of $\Omega$; this partition has to be suited to the composition of the domain $\Omega$, in the sense that $\varepsilon$ has to be constant inside a single element $K$. With this assumption, we can consider $\kappa$ as constant inside any element and we have to deal with discontinuities only on faces shared by two different elements. We write $\mathbf{n}_K$ for the outward-pointing unit normal vector on $\partial K$, and $h$ for the mesh width of $\{ \bigtau_h \}$. We denote by $ F_h = \bigcup_{K \in \bigtau_h } \partial K $ and $F_h^I = F_h \setminus \partial \Omega $ the skeleton of the mesh and its inner part.

Given two elements of the partition $K_1, K_2$, a piecewise-smooth function $v$ and vector field $\tau$ on $\bigtau_h$, we introduce  on $\partial K_1 \cap \partial K_2$ the averages and the normal jumps:
\begin{align} \label{jumps}
    \{ \hspace{-0.1cm} \{ v \} \hspace{-0.1cm} \} & := \frac{1}{2}\left( v_{|K_1} + v_{|K_2} \right), & \{ \hspace{-0.1cm} \{ \tau \} \hspace{-0.1cm} \} & := \frac{1}{2}\left( \tau_{|K_1} + \tau_{|K_2} \right), \\ 
    \llbracket v \rrbracket_N & := v_{|K_1}\mathbf{n}_{K_1} + v_{|K_2}\mathbf{n}_{K_2}, & \llbracket \tau \rrbracket_N & := \tau_{|K_1}\cdot \mathbf{n}_{K_1} + \tau_{|K_2}\cdot \mathbf{n}_{K_2}. \nonumber
\end{align}

Furthermore, we denote by $\nabla_h$ the elementwise application of $\nabla$ and write $\partial_n = \mathbf{n} \cdot \nabla_h$ and $\partial_{n_K} = \mathbf{n}_K \cdot \nabla_h$ for the normal derivatives on $\partial \Omega$ and $\partial K$ respectively.

For $s>0$, we define the two main spaces where Trefftz functions live, the broken Sobolev space $H^s(\bigtau_h)$ and the Trefftz space $T(\bigtau_h)$:
\begin{align} \label{broken_sob}
    & H^s(\bigtau_h) := \{ v \in L^2(\Omega) : v_{| K}\in H^s(K) \hspace{0.2cm} \forall K \in \bigtau_h \}, \\ 
    & T(\bigtau_h) := \{ v \in H^1(\bigtau_h) : -\Delta v - \kappa^2v = 0 \hspace{0.1cm} \text{in} \hspace{0.1cm} K \hspace{0.1cm} \text{and} \hspace{0.1cm} \partial_{\mathbf{n}_K}v \in L^2(\partial K) \hspace{0.2cm}\forall K \in \bigtau_h \}. \nonumber
\end{align}
The discrete Trefftz space $V_p(\bigtau_h)$ is a finite-dimensional subspace of $T(\bigtau_h)$ and
can be represented as $V_p(\bigtau_h) = \bigoplus_{K \in \bigtau_h}V_{p_K}(K)$, where $V_{p_K}(K)$ is a $p_K$-dimensional subspace of $T(\bigtau_h)$ of functions supported in $K$.

We now derive the general DG formulation, following \cite{PWDG}. In order to derive the method, we start by writing problem (\ref{eqn: Helmholtz}) as a first order system:
\begin{equation} \label{eqn: Helm first ord}
    \begin{cases}
-i\kappa\mathbf{\sigma} = \nabla u & \text{in} \hspace{0.1cm}\Omega,\\
-i\kappa u-\nabla\cdot\mathbf{\sigma}=0 & \text{in} \hspace{0.1cm}\Omega,\\
u=g_D & \text{on} \hspace{0.1cm} \Gamma_D, \\
-i\kappa \mathbf{\sigma}\cdot\mathbf{n}- i \kappa u=g_R & \text{on} \hspace{0.1cm} \Gamma_R.
\end{cases}
\end{equation} 
By multiplying the first and second equation of (\ref{eqn: Helm first ord}) by smooth test functions $\tau$ and $v$, respectively, and integrating by parts on each $K \in \bigtau_h$, we obtain
\begin{equation} \label{eqn: var}
   \begin{split}
       & \int_K -i \kappa \hspace{0.05cm} \sigma \cdot \overline{\tau} \hspace{0.1cm} dV + \int_K u \overline{\nabla\cdot\tau} \hspace{0.1cm} dV - \int_{\partial K} u \overline{\tau\cdot \mathbf{n}} \hspace{0.1cm} dS = 0 \hspace{0.3cm} \forall \tau \in \mathbf{H}(\text{div};K), \\
        & \int_K -i \kappa \hspace{0.05cm} u \hspace{0.05cm} \overline{v} \hspace{0.1cm} dV + \int_K \sigma \cdot \overline{\nabla v} \hspace{0.1cm} dV - \int_{\partial K} \sigma \cdot \mathbf{n}\overline{v} \hspace{0.1cm} dS = 0 \hspace{0.3cm} \forall v\in H^1(K).
   \end{split} 
\end{equation}
We now replace $u, v$ by $u_p, v_p \in V_p(\bigtau_h)$ and $i \kappa \sigma, \tau$ by $i \kappa \sigma_p, \tau_p \in V_p(\bigtau_h)^d$. Then, we approximate the traces of $u$ and $i \kappa \sigma$ across inter-element boundaries by the so-called \textit{numerical fluxes} denoted by $\hat{u}_p$ and $\widehat{i \kappa \sigma}_p$, which are defined below.

Replacing the elements in (\ref{eqn: var}) with their numerical approximations, we get
\begin{equation} \label{eqn: approx}
   \begin{split}
       & \int_K -i\kappa \sigma_p \cdot \overline{\tau_p} \hspace{0.1cm} dV + \int_K u_p \overline{\nabla\cdot\tau_p} \hspace{0.1cm} dV - \int_{\partial K} \hat{u}_p \overline{\tau_p\cdot \mathbf{n}} \hspace{0.1cm} dS = 0 \hspace{0.3cm} \forall \tau_p \in V_p(K)^d, \\
        & \int_K -i\kappa u_p  \hspace{0.05cm}\overline{v_p} \hspace{0.1cm} dV + \int_K \sigma_p \cdot \overline{\nabla v_p} \hspace{0.1cm} dV - \int_{\partial K} \frac{1}{i \kappa} \widehat{i \kappa \sigma}_p \cdot \mathbf{n}\overline{v}_p \hspace{0.1cm} dS = 0 \hspace{0.3cm} \forall v_p \in V_p(K).
   \end{split} 
\end{equation}

Integrating again by parts the first equation of (\ref{eqn: approx}), we obtain:
\begin{equation}
    \int_K i\kappa \sigma_p \cdot \overline{\tau_p} \hspace{0.1cm} dV = - \int_K \nabla u_p \cdot \overline{\tau_p} \hspace{0.1cm} dV + \int_{\partial K} (u_p - \hat{u}_p)\overline{\tau_p \cdot \mathbf{n}} \hspace{0.1cm} dS.
\end{equation}

Now we assume that $\nabla_hV_p(\bigtau_h) \subset V_p(\bigtau_h)^d$, in this way we can take $\tau_p=\nabla v_p$ in each element; note that with the PWDG method we are going to use in our model, this property is easily verified since the gradient of a plane wave is still a plane wave. Inserting the resulting expression into the second equation of (\ref{eqn: approx}), we obtain:
\begin{equation} \label{1.5}
    \int_K \left(\nabla u_p \cdot \overline{\nabla v_p} - \kappa^2u_p\overline{v_p}\right) dV - \int_{\partial K} (u_p - \hat{u}_p)\overline{\nabla v_p \cdot \mathbf{n}} \hspace{0.1cm} dS + \int_{\partial K} \widehat{i \kappa\sigma}_p \cdot \mathbf{n} \overline{v_p}  \hspace{0.1cm} dS = 0.
\end{equation}
We integrate  by parts once more the first term in (\ref{1.5}) and derive:
\begin{equation}
    \int_K  u_p \hspace{0.05cm} \overline{\left(-\Delta v_p - \kappa^2 v_p\right)} \hspace{0.1cm}dV + \int_{\partial K} \hat{u}_p\overline{\nabla v_p \cdot \mathbf{n}} \hspace{0.1cm}dS + \int_{\partial K} \widehat{i \kappa \sigma}_p \cdot \mathbf{n} \overline{v_p}  \hspace{0.1cm}dS = 0.
\end{equation}
(the boundary term appearing in this integration by parts cancels out with a boundary term already present in (\ref{1.5}))

The first term in this equation vanishes, since $v_p \in V_p(\bigtau_h)$, so we simply have:
\begin{equation} \label{1.7}
    \int_{\partial K} \hat{u}_p\overline{\nabla v_p \cdot \mathbf{n}} \hspace{0.1cm} dS + \int_{\partial K} \widehat{i \kappa \sigma}_p \cdot \mathbf{n} \overline{v_p}  \hspace{0.1cm} dS = 0
\end{equation}

We now give the explicit expression of the numerical fluxes $\hat{u}_p$ and $\widehat{ i \kappa \sigma}_p$.\\
On the internal faces $F\in F_h^I$ we define
\begin{equation} \label{flux_interno}
    \begin{cases}
        \hat{u}_p = \{ \hspace{-0.1cm} \{  u_p \} \hspace{-0.1cm} \}  + \beta \frac{1}{i\xi} \llbracket \nabla_h u_p  \rrbracket_N, & \\
        \widehat{i \kappa \sigma}_p = - \{ \hspace{-0.1cm} \{ \nabla_h u_p \} \hspace{-0.1cm} \}  - i \xi \alpha \llbracket u_p  \rrbracket_N, &      
    \end{cases}
\end{equation}
where $\xi$ is defined on an internal face $F$ as
\begin{equation} \label{xi}
    \xi = \frac{1}{2} \biggl[ \Re (\kappa_1) + \Re (\kappa_2) \biggl],
\end{equation}
with $\kappa_1=k\sqrt{\varepsilon_1}, \kappa_2=k\sqrt{\varepsilon_2}$ being the value of $\kappa$ on the two elements sharing the internal face $F$. The definition of $\xi$ follows from the formulation in Section 3.3 of \cite{Hoearth}.

On faces in $\Gamma_R$ we choose
\begin{equation} \label{flux_Robin}
    \begin{cases}
        \hat{u}_p = u_p -\delta\left( -\frac{1}{i\kappa} \nabla_h u_p \cdot \mathbf{n} + u_p + \frac{1}{i\kappa} g_R  \right), & \\
        \widehat{i \kappa \sigma}_p = -\nabla_h u_p -(1-\delta)\left( - \nabla_h u_p + i \kappa u_p \mathbf{n} +  g_R \mathbf{n} \right). &        
    \end{cases}
\end{equation}

Finally, on faces in $\Gamma_D$ we opt for
\begin{equation} \label{flux_dirichlet}
    \begin{cases}
        \hat{u}_p = g_D, & \\
        \widehat{i \kappa \sigma}_p = -\nabla_h u_p -i \kappa \alpha (u_p-g_D) \mathbf{n}. &        
    \end{cases}
\end{equation}

The parameters $\alpha$, $\beta$ and $\delta$ are called \textit{flux parameters}; the numerical fluxes also take into account the inhomogeneous boundary conditions.

Now we add (\ref{1.7}) over all elements $K \in \bigtau_h$ and obtain:
\begin{equation*}
    \int_{F_h^I} \left( \hat{u}_p \llbracket \overline{\nabla_h v_p} \rrbracket_N + \widehat{i \kappa \sigma}_p \cdot \llbracket \overline{v_p} \rrbracket_N \right) dS + \int_{F_h^B}  \left( \hat{u}_p\overline{\nabla_h v_p \cdot \mathbf{n}} + \widehat{i \kappa \sigma}_p \cdot \mathbf{n}\overline{v_p}\right) dS = 0.
\end{equation*}

Finally, we insert the above defined numerical fluxes and derive the DG method as follows: find $u_p \in V_p(\bigtau_h)$ such that, for all $v_p \in V_p(\bigtau_h)$,
\begin{equation} \label{bilin}
    A_h(u_p, v_p) =L_h(v_p),
\end{equation}
where
\begin{align} \label{forma_bilineare_PWDG}
    & A_h(u, v) :=\\ 
    & \int_{F_h^I} \left( \dlgraffa u \drgraffa \hspace{0.05cm} \llbracket \overline{\nabla_h v} \rrbracket_N  -i\xi^{-1} \beta \llbracket \nabla_h u\rrbracket_N \llbracket \overline{\nabla_h v} \rrbracket_N  -  \{ \hspace{-0.1cm} \{ \nabla_h u \} \hspace{-0.1cm} \} \cdot \llbracket \overline{v} \rrbracket_N  - i\xi \hspace{0.05cm} \alpha \hspace{0.05cm} \llbracket  u\rrbracket_N \cdot \llbracket \overline{v} \rrbracket_N  \right) dS  \nonumber \\ \nonumber
    & + \int_{\Gamma_R} \left( (1-\delta) \hspace{0.05cm} u \hspace{0.05cm} \overline{\nabla_h v \cdot \mathbf{n}} - \delta \hspace{0.05cm}i\kappa^{-1} \nabla_h u \cdot \mathbf{n}\overline{\nabla_h v \cdot \mathbf{n}} -\delta \nabla_h u\cdot \mathbf{n}\overline{v} - i\kappa (1-\delta) u \overline{v}  \right) dS \\ \nonumber
    & + \int_{\Gamma_D} \left( -\nabla_h u \cdot \mathbf{n} \hspace{0.05cm} \overline{v} - i\kappa \hspace{0.05cm}\alpha \hspace{0.05cm} u\hspace{0.05cm} \overline{v} \right) dS, \nonumber
\end{align}
and
\begin{equation} \label{termine_noto_PWDG}
    L_h(v) :=  \int_{\Gamma_R} g_R \left( -\delta i\kappa^{-1} \overline{\nabla_h v \cdot \mathbf{n}}+ (1-\delta) \overline{v} \hspace{0.1cm} \right) dS + \int_{\Gamma_D} g_D \left( -\alpha i\kappa \overline{v} - \overline{\nabla_h v \cdot \mathbf{n}} \right) dS.
\end{equation}

\section{The PWDG Method}
Many different basis functions can be chosen to span the discrete Trefftz space $V_p(\bigtau_h)$; with the Plane Wave Discontinuous Galerkin (PWDG) method we choose to use plane waves as basis functions.

Consider an element $K \in \bigtau_h$; we denote by $V_p(K)$ the plane wave space on $K$ spanned by $p$ plane waves, $p \in \mathbb{N}$:
\begin{equation} \label{pwspace}
    V_p(K) = \{ \hspace{0.1cm} v \in L^2(K) :  v(\mathbf{x}) = \sum_{j=1}^p \alpha_j \exp\{i\kappa\mathbf{d_j} \cdot \mathbf{x}\}, \hspace{0.3cm} \alpha_j \in \mathbb{C} \hspace{0.1cm} \},
\end{equation}
where $\{ \mathbf{d}_j \}_{j=1}^p \subset \mathbb{R}^2$, with $|\mathbf{d_j}|=1$ are different directions. To obtain isotropic approximations, in 2D, uniformly-spaced directions on the unit circle can be chosen as $\mathbf{d}_j = \left( \cos(\frac{2\pi j}{p}), \sin(\frac{2\pi j}{p}) \right) $.

Note that in this case we are choosing the same number of directions in every element $K$, so $p_K=p$ for every $K \in \bigtau_h$.

Then, we define:
\begin{equation} \label{pwglobal}
    V_p(\bigtau_h)=\{ \hspace{0.1cm} v \in L^2(\Omega) : v_{|K} \in V_p(K), \hspace{0.2cm}\forall K \in \bigtau_h \hspace{0.1cm} \}.
\end{equation}

The variational formulation (\ref{bilin}) derived above is still valid for the PWDG method. The main advantage of PWDG is that, using the properties of plane waves functions, we can easily find a closed formula to compute all the boundary integrals, reducing the errors caused by numerical approximation.\\

\begin{prop}
    The PWDG formulation is consistent.
\end{prop}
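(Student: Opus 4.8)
The plan is to verify Galerkin consistency: that is, to show that the exact solution $u$ of the boundary value problem (\ref{eqn: Helmholtz}) satisfies the discrete variational equation $A_h(u, v_p) = L_h(v_p)$ for every $v_p \in V_p(\bigtau_h)$. The key observation is that the entire derivation leading from (\ref{eqn: var}) to the bilinear form (\ref{forma_bilineare_PWDG}) was obtained by manipulating the numerical fluxes (\ref{flux_interno}), (\ref{flux_Robin}), (\ref{flux_dirichlet}); consistency amounts to checking that these fluxes are \emph{consistent} in the DG sense, meaning that when the exact solution is plugged in, each numerical flux reduces to the corresponding exact trace.

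First I would take the exact solution $u \in H^{3/2+s}(\Omega)$ and examine each type of numerical flux. On an internal face $F \in F_h^I$, since $u$ is a genuine solution of the Helmholtz equation across $\Omega$, both $u$ and its gradient $\nabla u$ are single-valued (continuous) across $F$; hence the normal jumps vanish, $\llbracket u \rrbracket_N = 0$ and $\llbracket \nabla_h u \rrbracket_N = 0$, while the averages satisfy $\dlgraffa u \drgraffa = u$ and $\{ \hspace{-0.1cm} \{ \nabla_h u \} \hspace{-0.1cm} \} = \nabla u$. Substituting into (\ref{flux_interno}) shows that $\hat{u}_p = u$ and $\widehat{i \kappa \sigma}_p = -\nabla u$, which are exactly the traces the fluxes are meant to approximate. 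Next I would treat the Robin faces on $\Gamma_R$: here the exact solution satisfies $\nabla u \cdot \mathbf{n} - i\kappa u = g_R$, so the parenthesised correction term in both lines of (\ref{flux_Robin}) vanishes identically, again leaving $\hat{u}_p = u$ and $\widehat{i \kappa \sigma}_p = -\nabla u$. Finally, on the Dirichlet faces $\Gamma_D$ the condition $u = g_D$ forces the correction term in (\ref{flux_dirichlet}) to vanish and $\hat{u}_p = g_D = u$, $\widehat{i \kappa \sigma}_p = -\nabla u$.

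Having established that for the exact solution every numerical flux coincides with the corresponding exact trace, I would then simply retrace the integration-by-parts derivation of the scheme, which is reversible. Starting from the elementwise identity (\ref{1.7}) (equivalently, running (\ref{eqn: approx}) backwards with the exact fluxes), summing over all $K \in \bigtau_h$, and collecting the boundary contributions according to whether a face is internal, Robin, or Dirichlet, one recovers precisely $A_h(u, v_p) - L_h(v_p) = 0$: the internal-face and average terms reassemble the volume integral $\int_K(\nabla u \cdot \overline{\nabla v_p} - \kappa^2 u \overline{v_p})$ which vanishes against the Helmholtz equation, while the boundary fluxes generate exactly the $L_h$ data terms coming from $g_R$ and $g_D$. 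The smoothness assumption $u \in H^{3/2+s}(\Omega)$ with $s>0$ is what guarantees that the traces of $\nabla u$ on the faces are well-defined in $L^2$, so that all the boundary integrals in (\ref{forma_bilineare_PWDG}) and (\ref{termine_noto_PWDG}) make sense when evaluated at $u$.

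The only genuinely delicate point, and the step I expect to require the most care, is the internal-face argument: one must justify that the exact solution has a single-valued normal-gradient trace across inter-element faces, i.e.\ that $\llbracket \nabla_h u \rrbracket_N = 0$. This is immediate when $\varepsilon$ (and hence $\kappa$) is globally smooth, but in the physically relevant setting where $\varepsilon$ jumps across an interface that is resolved by a mesh face, the correct statement is that the transmission conditions (continuity of $u$ and of $\nabla u \cdot \mathbf{n}$) hold for the exact solution, so the relevant jump still vanishes; I would state this as the regularity hypothesis underlying the consistency claim and note that the flux definition (\ref{flux_interno}) with $\xi$ as in (\ref{xi}) is tailored precisely so that consistency survives the presence of such material interfaces.
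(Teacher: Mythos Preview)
Your proposal is correct and follows essentially the same route as the paper: verify that for the exact solution the numerical fluxes on every face type collapse to the exact traces $\hat{u}_p=u$, $\widehat{i\kappa\sigma}_p=-\nabla u$, and then conclude $A_h(u,v_p)=L_h(v_p)$. The paper's proof is briefer (it simply records the vanishing of the jumps and the boundary conditions, then asserts flux consistency and the result), while you additionally spell out the regularity needed for the face traces and the role of the transmission conditions at material interfaces; these are worthwhile clarifications but do not change the argument.
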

\begin{proof}
    If $u \in H^2(\Omega)$ solves (\ref{eqn: Helmholtz}), then it holds that $\llbracket u \rrbracket_N= 0$ and $\llbracket \nabla_h u \rrbracket_N= 0$ on $F_h^I$, while $\dlgraffa u \drgraffa = u$ and $\dlgraffa \nabla_ h \drgraffa = \nabla_h u$. Moreover, $u=g_D$ on $\Gamma_D$ and $\nabla_h u \cdot n - i\kappa u = 0$ on $\Gamma_R$.\\
    From the definition of the numerical fluxes $\hat{u}_p$ and $\widehat{i \kappa \sigma}_p$ we have that on all the faces, when we substitute $u$ in the expression of the fluxes, we have that $\hat{u}_p = u$ and $ \hspace{0.05cm}\widehat{i \kappa \sigma}_p= -\nabla_h u$. This gives us that, for every $v_p \in V_p (\bigtau_h)$,
    \begin{equation*}
        A_h(u,v_p)=L_h(v_p),
    \end{equation*}
    that is the consistency of the method.
\end{proof}

\subsection{Matrix formulation in the case of $\varepsilon$ constant} \label{matr_form}
In this section, we assume $\varepsilon$ constant and equal to $1$; under these assumptions we have that $\kappa=k$.

We already introduced the plane wave (\ref{pwspace}) space on $K$ and the global space (\ref{pwglobal}). The PWDG formulation is: find $u_p \in V_p(\bigtau_h)$ such that, for all $v_p \in V_p(\bigtau_h)$,
\begin{equation*}
    A_h(u_p, v_p) =L_h(v_p).
\end{equation*}
Consider $p\in\mathbb{N}$ fixed; since $u_p,v_p \in V_p(\bigtau_h)$, we can write 
\begin{align*}
   & u_p(\mathbf{x})_{|K} = \sum_{l=1}^p u_{l}^K \varphi_{l}^K(\mathbf{x}), \hspace{0.7cm} v_p(\mathbf{x})_{|K} = \sum_{j=1}^p v_{j}^K \varphi_{j}^K(\mathbf{x}),
\end{align*}
where $u_{j}^K,v_{j}^K \in \mathbb{C}$, $\hspace{0.2cm} \forall j=1,...,p$, $\hspace{0.2cm} \forall K\in \bigtau_h$ and 
\begin{equation*}
    \varphi_{j}^K(\mathbf{x})=\begin{cases}
    \exp\{ik\mathbf{d_j} \cdot \mathbf{x}\} & \mathbf{x} \in K \\
    0 & \mathbf{x} \not \in K
\end{cases}, \hspace{0.4cm} \forall j=1,...,p, \hspace{0.2cm} \forall K\in \bigtau_h.
\end{equation*}\\
Now, we can write $V_p(\bigtau_h) = \text{span} \{ \varphi_1, \varphi_2, ... , \varphi_N \}$, where $N$ is the total number of degrees of freedom, i.e. $N=pT$, where $T$ is the number of elements $K\in \bigtau_h$.

Since $A_h(\cdot,\cdot)$ is a bilinear form and $L_h(\cdot)$ is linear, the variational problem is equivalent to: \begin{center}
    find $u_p \in V_p(\bigtau_h)$ such that $A_h(u_p, \varphi_l) =L_h(\varphi_l)$, for all $l = 1,...,N$.
\end{center}

If we write $u_p = \sum_{j=1}^N u_j\varphi_j$, we get that
\begin{equation*}
    A_h(u_p, \varphi_l) = A_h\left(\sum_{j=1}^N u_j\varphi_j, \varphi_l\right) = \sum_{j=1}^N u_j A_h(\varphi_j, \varphi_l) = L_h(\varphi_l), \hspace{0.2cm} \forall l=1,...,N,
\end{equation*}
which can be written as a linear sistem $A\mathbf{u}=\mathbf{L}$, where
\begin{align*}
    \mathbf{u}_l = u_l, & & A_{l,j} = A_h(\varphi_j, \varphi_l), & & \mathbf{L}_l = L_h(\varphi_l).
\end{align*}

We note that if two basis functions $\varphi_l$ and $\varphi_j$ are associated to the same element $K$ or to two adjacent elements $K, K'$, the corresponding matrix entry is non-zero; in all the other cases the entry of the matrix is equal to zero. 
First of all, we have to distinguish between internal and boundary elements. Then, we have to consider two different situations, the first being when $\varphi_l, \varphi_j$ are in the same element $K$, the second when they are in two adjacent elements $K, K'$.

Let's now focus on the case of $\varphi_l, \varphi_j$ on the same internal element $K$:
\begin{flalign*}
    A_{j,l}=A_h(\varphi_l, \varphi_j)=\int_{\partial K} & \big[ \hspace{0.05cm} \{ \hspace{-0.1cm} \{ \varphi_l \} \hspace{-0.1cm} \} \llbracket \overline{\nabla \varphi_j} \rrbracket_N  -ik^{-1} \left( \beta \llbracket \nabla \varphi_l\rrbracket_N \llbracket \overline{\nabla \varphi_j} \rrbracket_N \right) \\ 
     & - \{ \hspace{-0.1cm} \{ \nabla_h \varphi_l \} \hspace{-0.1cm} \} \cdot \llbracket \overline{\varphi_j} \rrbracket_N  - ik \left( \alpha \llbracket  \varphi_l\rrbracket_N \cdot \llbracket \overline{\varphi_j} \rrbracket_N \right) \big] \hspace{0.1cm}dS,
\end{flalign*}
(note that the boundary terms are not present since $K$ is an internal element).\\
We first observe that in this case, since $\varphi_l$ and $\varphi_j$ are associated to the same element, the normal jumps don't change sign because the exterior normal $\mathbf{n}$ of $K$ does not change. Replacing the explicit expressions of the averages and the normal jumps as defined below, and recalling that $\varphi_l, \varphi_j$ are equal to zero outside $K$, we obtain:
\begin{flalign*}
    A_{j,l} = \int_{\partial K} & \big[ \hspace{0.05cm} \frac{1}{2} \hspace{0.05cm} \varphi_l \hspace{0.05cm} \overline{\nabla \varphi_j}\cdot \mathbf{n}  - \beta ik^{-1} \left( \nabla \varphi_l \cdot \mathbf{n} \overline{\nabla \varphi_j}\cdot \mathbf{n} \right)\\ 
    &- \frac{1}{2}  \nabla \varphi_l \cdot \overline{\varphi_j}\mathbf{n} - \alpha ik \left( \varphi_l\mathbf{n} \cdot \overline{\varphi_j}\mathbf{n} \right) \big] \hspace{0.1cm}dS.
\end{flalign*}
We observe that, on the element $K$, 
\begin{align*}
    & \varphi_l(\mathbf{x}) = \exp\{ ik\mathbf{x}\cdot\mathbf{d}_l \}, & & \nabla\varphi_l(\mathbf{x}) = ik\mathbf{d}_l \exp\{ ik\mathbf{x}\cdot\mathbf{d}_l \}=ik\mathbf{d}_l \hspace{0.05cm} \varphi_l(\mathbf{x}), \\
    & \overline{\varphi_l}(\mathbf{x}) = \exp\{ -ik\mathbf{x}\cdot\mathbf{d}_l \}, & & \overline{\nabla\varphi_l}(\mathbf{x}) = -ik\mathbf{d}_l\exp\{ -ik\mathbf{x}\cdot\mathbf{d}_l \}=-ik\mathbf{d}_l \hspace{0.05cm} \overline{\varphi_l}(\mathbf{x}) .
\end{align*} 
So the equation above becomes:
\begin{align*}
    A_{j,l} & = \int_{\partial K} e^{ ik\mathbf{x}\cdot(\mathbf{d}_l-\mathbf{d}_j)} \left[ -\frac{1}{2}(ik\mathbf{d}_j\cdot\mathbf{n}+ik\mathbf{d}_l\cdot\mathbf{n})-\beta ik\mathbf{d}_l\cdot\mathbf{n}\hspace{0.05cm}\mathbf{d}_j\cdot\mathbf{n} -\alpha ik \right] dS \\
    & = \sum_{F\in\mathcal{F}} \int_{F} e^{ ik\mathbf{x}\cdot(\mathbf{d}_l-\mathbf{d}_j)} \left[ -\frac{1}{2}(ik\mathbf{d}_j\cdot\mathbf{n}+ik\mathbf{d}_l\cdot\mathbf{n})-\beta ik\mathbf{d}_l\cdot\mathbf{n}\hspace{0.05cm}\mathbf{d}_j\cdot\mathbf{n} -\alpha ik \right] dS \\
    & = \sum_{F\in\mathcal{F}}  ik \left[ -\frac{1}{2}(\mathbf{d}_j\cdot\mathbf{n}+\mathbf{d}_l\cdot\mathbf{n})-\beta \hspace{0.05cm} \mathbf{d}_l\cdot\mathbf{n}\hspace{0.05cm}\mathbf{d}_j\cdot\mathbf{n} -\alpha \hspace{0.05cm} \right] \int_{F} e^{ ik\mathbf{x}\cdot(\mathbf{d}_l-\mathbf{d}_j)} \hspace{0.05cm} dS,
\end{align*}
where $\mathcal{F}$ is the set containing all the faces of $K$.

Another simple case is when $\varphi_l, \varphi_j$ are associated to the same element $K$, but some of its faces are part of the boundary. In this case the matrix entry is:
\begin{flalign*}
    A_{j,l}=&\int_{\partial K \cap F_h^I} \big[ \hspace{0.05cm} \{ \hspace{-0.1cm} \{ \varphi_l \} \hspace{-0.1cm} \} \llbracket \overline{\nabla \varphi_j} \rrbracket_N  - ik^{-1} \left( \beta \llbracket \nabla \varphi_l\rrbracket_N \llbracket \overline{\nabla \varphi_j} \rrbracket_N \right) \\ 
     & - \{ \hspace{-0.1cm} \{ \nabla_h \varphi_l \} \hspace{-0.1cm} \} \cdot \llbracket \overline{\varphi_j} \rrbracket_N  - ik \left( \alpha \llbracket  \varphi_l\rrbracket_N \cdot \llbracket \overline{\varphi_j} \rrbracket_N \right) \big] \hspace{0.1cm}dS \\
     & + \int_{\partial K \cap \Gamma_R} \big[ \hspace{0.05cm}  (1-\delta)\varphi_l \overline{\nabla \varphi_J \cdot \mathbf{n}} - ik^{-1}(\delta\nabla\varphi_l\cdot\mathbf{n}\overline{\nabla \varphi_J \cdot \mathbf{n}}) \\
     &  - \delta\nabla\varphi_l\cdot\mathbf{n}\overline{\varphi_j} - ik(1-\delta)\varphi_l\overline{\varphi_j}  \hspace{0.05cm}\big] \hspace{0.1cm}dS \\
     & + \int_{\partial K \cap \Gamma_D} \left[ -\nabla_h \varphi_l \cdot \mathbf{n} \hspace{0.05cm} \overline{\varphi_j} - ik \hspace{0.05cm}\alpha \hspace{0.05cm} \varphi_l\hspace{0.05cm} \overline{\varphi_j} \right]  \hspace{0.1cm} dS
\end{flalign*}
(note the first part is the same as before, except that we integrate only over the internal faces of $K$).\\
Replacing the explicit expressions of $\varphi_l, \varphi_j$ on $K$ and computing the averages and the normal jumps we obtain:
\begin{align*}
    A_{j,l} & = \sum_{F\in\mathcal{F}^I} \int_{F} e^{ ik\mathbf{x}\cdot(\mathbf{d}_l-\mathbf{d}_j)} \left[ -\frac{1}{2}(ik\mathbf{d}_j\cdot\mathbf{n}+ik\mathbf{d}_l\cdot\mathbf{n}) - \beta ik\mathbf{d}_l\cdot\mathbf{n}\hspace{0.05cm}\mathbf{d}_j\cdot\mathbf{n} - \alpha ik \right] dS\\ 
    & + \sum_{F\in\mathcal{F}^R} \int_{F} e^{ ik\mathbf{x}\cdot(\mathbf{d}_l-\mathbf{d}_j)}  \left[ ik(1-\delta)(-1-\mathbf{d}_j\cdot\mathbf{n})+ik\delta\mathbf{d}_l\cdot\mathbf{n} (-\mathbf{d}_j\cdot\mathbf{n}-1) \right] dS \\
    & + \sum_{F\in\mathcal{F}^D} \int_{F} e^{ ik\mathbf{x}\cdot(\mathbf{d}_l-\mathbf{d}_j)}  \left[ -ik \mathbf{d}_l \cdot \mathbf{n} -\alpha i k \right] dS \\
    & = \sum_{F\in\mathcal{F}^I} ik\left[ -\frac{1}{2}(\mathbf{d}_j\cdot\mathbf{n}+\mathbf{d}_l\cdot\mathbf{n})-\beta \mathbf{d}_l\cdot\mathbf{n}\hspace{0.05cm}\mathbf{d}_j\cdot\mathbf{n} -\alpha \right] \int_{F} e^{ ik\mathbf{x}\cdot(\mathbf{d}_l-\mathbf{d}_j)} \hspace{0.05cm} dS\\ 
    & + \sum_{F\in\mathcal{F}^R}  ik\left[ (1-\delta)(-1-\mathbf{d}_j\cdot\mathbf{n})+\delta\mathbf{d}_l\cdot\mathbf{n} (-\mathbf{d}_j\cdot\mathbf{n}-1) \right] \int_{F} e^{ ik\mathbf{x}\cdot(\mathbf{d}_l-\mathbf{d}_j)} \hspace{0.05cm} dS \\
    & + \sum_{F\in \mathcal{F}^D}  ik\left[ -\alpha - \mathbf{d}_l \cdot \mathbf{n} \right] \int_{F} e^{ ik\mathbf{x}\cdot(\mathbf{d}_l-\mathbf{d}_j)} \hspace{0.05cm} dS,
\end{align*}
where $\mathcal{F}^I = \mathcal{F} \cap F_h^I$, $\mathcal{F}^R = \mathcal{F} \cap \Gamma_R$ and $\mathcal{F}^D = \mathcal{F} \cap \Gamma_D$.

The last case is when $\varphi_l, \varphi_j$ are associated to two adjacent elements $K, K'$. We have to integrate only over the face $F$ common to the two elements. We observe that this face can't be a boundary face, since it is adjacent to two different elements, so the matrix entry becomes:
\begin{flalign*}
    A_{j,l}=\int_{F} & \big[ \hspace{0.05cm} \{ \hspace{-0.1cm} \{ \varphi_l \} \hspace{-0.1cm} \} \llbracket \overline{\nabla \varphi_j} \rrbracket_N  - ik^{-1} \left( \beta \llbracket \nabla \varphi_l\rrbracket_N \llbracket \overline{\nabla \varphi_j} \rrbracket_N \right) \\ 
     & - \{ \hspace{-0.1cm} \{ \nabla_h \varphi_l \} \hspace{-0.1cm} \} \cdot \llbracket \overline{\varphi_j} \rrbracket_N  - ik \left( \alpha \llbracket  \varphi_l\rrbracket_N \cdot \llbracket \overline{\varphi_j} \rrbracket_N \right) \big] \hspace{0.1cm}dS.
\end{flalign*}
We have that $\varphi_l=0$ outside of $K$ and $\varphi_j=0$ outside of $K'$, so computing the averages is not a problem since they are equivalent to the function multiplied by $\frac{1}{2}$. When we compute the normal jumps we have to be careful, because the outer normal vectors of $K$ and $K'$ are not the same. If we denote with $\mathbf{n}$ the outer normal of $K$ on $F$ and with $\mathbf{n}'$ the outer normal of $K'$, we have that $\mathbf{n}'=-\mathbf{n}$. So, when we compute  the normal jumps on $F$ for $\varphi_j$ and $\nabla\varphi_j$ a negative sign will appear. Since the normal jumps of $\varphi_j$ or $\nabla\varphi_j$ appear exactly once for every term, the matrix entry is simply equal to the case of the internal face we analyzed before, with the difference of a minus sign in front:
\begin{align*}
    A_{j,l} & = - ik \left[ -\frac{1}{2}(\mathbf{d}_j\cdot\mathbf{n}+\mathbf{d}_l\cdot\mathbf{n})-\beta \mathbf{d}_l\cdot\mathbf{n}\hspace{0.05cm}\mathbf{d}_j\cdot\mathbf{n} -\alpha \right] \int_{F} e^{ ik\mathbf{x}\cdot(\mathbf{d}_l-\mathbf{d}_j)} \hspace{0.05cm} dS.
\end{align*}
\begin{figure}[t]
    \centering
    \includegraphics[width=.35\textwidth]{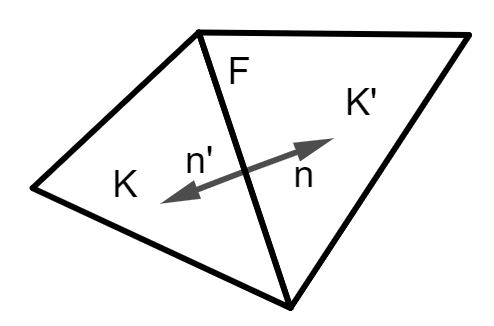}
    \caption{Two adjacent elements of the mesh and the shared face}
    \label{fig:triangoli}
\end{figure}

We can observe that in each of the three cases, the matrix entry is the sum of terms of the form 
\begin{equation*}
    ik \Tilde{c} \int_{F} e^{ ik\mathbf{x}\cdot\mathbf{d}} \hspace{0.05cm} dS,
\end{equation*}
where $\Tilde{c}$ is a real constant and $\mathbf{d} = \mathbf{d}_l-\mathbf{d}_j$. Until now we considered a general element $K$ with its face $F$; let's now focus on the 2D case, so that $F$ is a segment. The integral on $F$ can be computed exactly with a closed formula.\\
Let the edge $F$ be parameterized by 
\begin{equation*}
    \gamma : [0, 1] \to F \subset \mathbb{R}^2, \hspace{0.4cm} t \mapsto \mathbf{a} + t \cdot (\mathbf{b} - \mathbf{a}),
\end{equation*}
where $\mathbf{a},\mathbf{b} \in \mathbb{R}^2$ are the end points of $F$. Then
\begin{equation*}
    \int_{F} e^{ ik\mathbf{x}\cdot\mathbf{d}} \hspace{0.05cm} dS = \| \mathbf{b} - \mathbf{a} \| \int_{0}^{1} e^{ ik( \mathbf{a} + t \cdot (\mathbf{b} - \mathbf{a}))\cdot\mathbf{d}} \hspace{0.05cm} dt = \| \mathbf{b} - \mathbf{a} \| e^{ik \mathbf{a} \cdot \mathbf{d}}\int_{0}^{1} e^{ ik( \mathbf{b} - \mathbf{a})\cdot\mathbf{d} \hspace{0.05cm} t} \hspace{0.05cm} dt.
\end{equation*}
Using
\begin{equation*}
    \psi(z) := \int_0^1 e^{zt} dt = \begin{cases} \frac{e^z-1}{z} & z\neq 0, \\ 1 & z=0, \end{cases}
\end{equation*}
we can write
\begin{equation*}
    \int_{F} e^{ ik\mathbf{x}\cdot\mathbf{d}} \hspace{0.05cm} dS = \| \mathbf{b} - \mathbf{a} \| \hspace{0.05cm}\psi( ik( \mathbf{b} - \mathbf{a})\cdot\mathbf{d}).
\end{equation*}
Note that the value $ik( \mathbf{b} - \mathbf{a})\cdot\mathbf{d}$ becomes equal to zero either when the direction $\mathbf{d}=\mathbf{d}_l-\mathbf{d}_j=\mathbf{0}$ or when $( \mathbf{b} - \mathbf{a})\cdot\mathbf{d} =0$, which means that the direction of the plane wave is orthogonal to the edge of the element.

For the right-hand side of the linear system, we observe that we only have an integral on the exterior boundary, so this term is zero for basis function associated to internal elements. For the boundary elements we have that:
\begin{align*}
    \mathbf{L}_j & =L_h(\varphi_j) = \int_{\partial K \cap \Gamma_R} g_R \left( -\delta ik^{-1} \overline{\nabla \varphi_j \cdot \mathbf{n}}+ (1-\delta) \overline{\varphi_j} \hspace{0.1cm} \right) dS\\
    & + \int_{\partial K \cap \Gamma_D} g_D \left( -\alpha ik \hspace{0.1cm} \overline{\varphi_j} - \overline{\nabla \varphi_j \cdot \mathbf{n}} \right) dS\\
    & = \sum_{F\in\mathcal{F}^R} \int_{F} g_R(\mathbf{x}) \hspace{0.1cm}e^{ -ik\mathbf{x}\cdot\mathbf{d}_j} \left[ -\delta \hspace{0.05cm} \mathbf{d}_j \cdot \mathbf{n} + (1-\delta) \right] dS \\
    & + \sum_{F\in\mathcal{F}^D} \int_{F} g_D(\mathbf{x}) \hspace{0.1cm}e^{ -ik\mathbf{x}\cdot\mathbf{d}_j} \left[ -\alpha \hspace{0.1cm} ik + ik\mathbf{d}_j \cdot \mathbf{n} \right] dS \\
    & = \sum_{F\in\mathcal{F}^R} \left[ \hspace{0.05cm} \delta \hspace{0.05cm}( -\mathbf{d}_j \cdot \mathbf{n}- 1) + 1 \right] \int_{F} e^{ -ik\mathbf{x}\cdot\mathbf{d}_j} g_R(\mathbf{x}) dS \\
    & + \sum_{F\in\mathcal{F}^D} ik \left[ -\alpha + \mathbf{d}_j \cdot \mathbf{n} \right] \int_{F} e^{ -ik\mathbf{x}\cdot\mathbf{d}_j} g_D(\mathbf{x}) dS.
\end{align*}
In this case we are not guaranteed to find a closed formula for the computation of $\int_{F} e^{ -ik\mathbf{x}\cdot\mathbf{d}_j} g_R(\mathbf{x}) dS$ and $\int_{F} e^{ -ik\mathbf{x}\cdot\mathbf{d}_j} g_D(\mathbf{x}) dS$, so we will need to approximate it using an appropriate quadrature rule. In the case of an incident plane wave we can derive a closed formula to compute this integral.

\section{Error analysis}
We develop our a priori error analysis under the following additional assumptions:
\begin{enumerate}
    \item $\Omega$ is convex;
    \item each element $K$ of $\bigtau_h$ is a convex Lipschitz domain;
    \item there exists a constant $\rho \in (0, 1)$ such that each element $K \in \bigtau_h$ contains a ball of radius $\rho h_K$ (shape regularity);
    \item there exists a constant $\mu \in (0, 1)$ such that, for each $K \in \bigtau_h$, $h_K \geq \mu \hspace{0.05cm} h$ (quasi-uniformity);
    \item $\alpha, \beta$, and $\delta$ are real, strictly positive, and independent of $p, h$, and $k$, with $0<\delta\leq 1/2$.
\end{enumerate}
We consider the broken Sobolev space $H^s(\bigtau_h)$ and the Trefftz space $T(\bigtau_h)$ defined in (\ref{broken_sob}) and define on the Trefftz space the following mesh- and flux-dependent seminorms:
\begin{align*}
    \vertiii{w}_{\text{TDG}}^2 & := \left\| \xi^{-\frac{1}{2}}\beta^{\frac{1}{2}} \llbracket \nabla_h w \rrbracket_N \right\|^2_{L^2(F_h^I)} + \left\| \xi^{\frac{1}{2}}\alpha^{\frac{1}{2}} \llbracket w \rrbracket_N \right\|^2_{L^2(F_h^I)} \\
    & + \left\| \kappa^{-\frac{1}{2}} \delta^{\frac{1}{2}} \nabla_h w \cdot \mathbf{n} \right\|^2_{L^2(\Gamma_R)} + \left\| \kappa^{\frac{1}{2}} (1-\delta)^{\frac{1}{2}} w \right\|^2_{L^2(\Gamma_R)} + \left\| \kappa^{\frac{1}{2}} \alpha^{\frac{1}{2}} w \right\|^2_{L^2(\Gamma_D)}, \\
    \vertiii{w}_{\text{TDG}^+}^2 & := \vertiii{w}_{\text{TDG}}^2 + \left\| \xi^{\frac{1}{2}} \beta^{-\frac{1}{2}} \dlgraffa w \drgraffa \right\|^2_{L^2(F_h^I)} + \left\| \xi^{-\frac{1}{2}} \alpha^{-\frac{1}{2}} \dlgraffa \nabla_h w \drgraffa \right\|^2_{L^2(F_h^I)} \\
    &  + \left\| \kappa^{\frac{1}{2}} \delta^{-\frac{1}{2}} w \right\|^2_{L^2(\Gamma_R)} + \left\| \kappa^{-\frac{1}{2}} \alpha^{-\frac{1}{2}}\nabla_h w \cdot \mathbf{n} \right\|^2_{L^2(\Gamma_D)}.
\end{align*}
It holds that these seminorms are indeed norms in the Trefftz space $T(\bigtau_h)$.
\begin{prop} \label{prop:norm}
    The seminorms $\vertiii{\cdot}_{\text{TDG}}$ and $\vertiii{\cdot}_{\text{TDG}^+}$ are norms in the Trefftz space $T(\bigtau_h)$.
\end{prop}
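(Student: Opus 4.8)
The plan is to reduce the statement to positive-definiteness, since the remaining seminorm axioms are immediate. Each of the five (respectively seven) constituent terms of $\vertiii{\cdot}_{\text{TDG}}$ and $\vertiii{\cdot}_{\text{TDG}^+}$ is the $L^2$-norm of a quantity depending linearly on $w$ (a normal jump, an average, a trace, or a normal trace), and these are well defined because $w\in T(\bigtau_h)$ has $H^1$ restrictions and $L^2$ normal derivatives on each $\partial K$. Hence non-negativity, absolute homogeneity, and the triangle inequality follow at once by applying Minkowski's inequality termwise. It therefore remains to show that $\vertiii{w}_{\text{TDG}}=0$ forces $w=0$. Moreover, by definition $\vertiii{w}_{\text{TDG}^+}^2=\vertiii{w}_{\text{TDG}}^2+(\text{non-negative terms})$, so $\vertiii{w}_{\text{TDG}}\le\vertiii{w}_{\text{TDG}^+}$ and it suffices to treat the weaker seminorm $\vertiii{\cdot}_{\text{TDG}}$.

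Next I would exploit the sign assumptions on the flux parameters. By the fifth assumption of the error analysis, $\alpha,\beta,\delta$ are real and strictly positive with $0<\delta\le 1/2$, so $1-\delta\ge 1/2>0$; moreover $\xi>0$ on every interior face by (\ref{xi}), being an average of positive real parts of wavenumbers, and $\kappa\neq 0$. All the weights are therefore nonzero, so $\vertiii{w}_{\text{TDG}}=0$ forces each term to vanish separately:
\[
\llbracket \nabla_h w \rrbracket_N = \llbracket w \rrbracket_N = 0 \ \text{on}\ F_h^I, \qquad \nabla_h w \cdot \mathbf{n} = 0,\ \ w = 0 \ \text{on}\ \Gamma_R, \qquad w = 0 \ \text{on}\ \Gamma_D .
\]

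I would then carry out the standard gluing argument. The vanishing of $\llbracket w \rrbracket_N$ means the element traces of $w$ agree across every interior face, whence $w\in H^1(\Omega)$. Testing the elementwise equation against an arbitrary $\phi\in C_c^\infty(\Omega)$ and integrating by parts on each $K$, the volume contributions vanish because $w\in T(\bigtau_h)$, while the interface contributions collapse to $\int_{F_h^I}\llbracket \nabla_h w\rrbracket_N\,\overline{\phi}\,dS=0$; thus $w$ is a weak, and by interior elliptic regularity real-analytic, solution of $-\Delta w-\kappa^2 w=0$ in all of $\Omega$. Finally, on the nonempty, relatively open piece $\Gamma_R$ we have both $w=0$ and $\nabla_h w\cdot\mathbf{n}=0$, i.e. zero Cauchy data; extending $w$ by zero across a smooth portion of $\Gamma_R$ produces a Helmholtz solution on an open neighbourhood that vanishes on an open set, so real-analyticity together with the connectedness of $\Omega$ yields $w\equiv 0$. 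Definiteness for $\vertiii{\cdot}_{\text{TDG}^+}$ then follows from the domination observed above.

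The main obstacle is precisely this last unique-continuation step: one must pass from zero Cauchy data on a boundary portion to the global vanishing of $w$. This is where the hypothesis $\Gamma_R\neq\varnothing$ is essential, and where a minimal amount of regularity of $\Gamma_R$ (enough to contain a non-characteristic smooth piece) is needed so that either the extension-by-zero argument or Holmgren's uniqueness theorem for the analytic-coefficient operator $-\Delta-\kappa^2$ applies. Note that the Dirichlet datum $w=0$ on $\Gamma_D$ is not needed for the conclusion; the overdetermined data on $\Gamma_R$ alone drives the argument.
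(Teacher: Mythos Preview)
Your argument is correct. The paper takes a slightly different final step: after the same gluing to obtain a global solution of $-\Delta w-\kappa^2 w=0$, it does not invoke unique continuation from zero Cauchy data on $\Gamma_R$. Instead it observes that $w=0$ and $\partial_{\mathbf n}w=0$ on $\Gamma_R$ together imply the homogeneous impedance condition $\partial_{\mathbf n}w-i\kappa w=0$ there, so that $w$ is a solution of the full boundary value problem (\ref{eqn: Helmholtz}) with $g_D=g_R=0$; it then cites the uniqueness theorem for that BVP to conclude $w=0$. Your route via extension by zero and unique continuation is more self-contained and, as you rightly point out, shows that the Dirichlet information on $\Gamma_D$ is unnecessary; on the other hand it trades one black box (well-posedness of the impedance problem) for another (unique continuation), and your appeal to real-analyticity is clean only when $\kappa$ is constant --- if $\varepsilon$ is merely piecewise constant the glued $w$ is not globally analytic, and one should instead invoke a Carleman-type unique continuation result for $-\Delta+V$ with $V\in L^\infty$.
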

\begin{proof}
    Let $w\in T (\bigtau_h)$ be such that $\vertiii{w}_{\text{TDG}}^2=0$. Then $w \in H^2(\Omega)$ and satisfies $\Delta w + \kappa^2 w = 0$ in $\Omega$, $w = 0$ on $\partial \Omega$, and $\nabla w \cdot \mathbf{n} = 0$ on $\Gamma_R$, which implies $w=0$ on $\Gamma_D$ and $\nabla w \cdot \mathbf{n}- i\kappa w = 0$ on $\Gamma_R$. The uniqueness of the solution of problem (\ref{eqn: Helmholtz}), which is given in Theorem 2.1 of \cite{loc_ref}, gives $w = 0$.
\end{proof}
\begin{prop} \label{prop:imag}
    For every $w\in T (\bigtau_h)$ it hold that 
    \begin{equation*}
        \Im \hspace{0.05cm} [ A_h(w,w) ] = -\vertiii{w}_{\text{TDG}}^2\hspace{0.05cm}.
    \end{equation*}
\end{prop}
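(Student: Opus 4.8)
The plan is to set $u=v=w$ in the sesquilinear form (\ref{forma_bilineare_PWDG}) and take the imaginary part of each of its three groups of terms (internal faces, $\Gamma_R$, $\Gamma_D$), showing that everything reorganizes into exactly the five squared quantities defining $\vertiii{\cdot}_{\text{TDG}}$. Throughout I use that $\xi$ is real by (\ref{xi}), that $\alpha,\beta,\delta$ are real by assumption, and that $\kappa$ is real in this setting, so that any term carrying an explicit factor $i$ together with an otherwise real coefficient is purely imaginary.

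First I would dispose of the two penalty terms on $F_h^I$. Since $\mathbf{n}$ is real one has $\llbracket \overline{\nabla_h w}\rrbracket_N = \overline{\llbracket \nabla_h w\rrbracket_N}$ and similarly for $w$, so $-i\xi^{-1}\beta\,\llbracket \nabla_h w\rrbracket_N\llbracket\overline{\nabla_h w}\rrbracket_N = -i\xi^{-1}\beta\,|\llbracket \nabla_h w\rrbracket_N|^2$ and $-i\xi\alpha\,\llbracket w\rrbracket_N\cdot\llbracket\overline{w}\rrbracket_N = -i\xi\alpha\,|\llbracket w\rrbracket_N|^2$; their imaginary parts produce directly the first two squared terms of $\vertiii{w}_{\text{TDG}}^2$ with the correct negative sign.

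The heart of the argument is the remaining pair of flux terms $\dlgraffa w\drgraffa\,\llbracket\overline{\nabla_h w}\rrbracket_N - \{\{\nabla_h w\}\}\cdot\llbracket\overline{w}\rrbracket_N$. Writing $w_j:=w|_{K_j}$ and $a_j:=\nabla_h w|_{K_j}\cdot\mathbf{n}_{K_1}$ on a face shared by $K_1,K_2$ and expanding the averages and jumps, I would show that the imaginary part of this expression collapses to the elementwise boundary flux $\Im\big(w|_{K_1}\,\overline{\partial_{n_{K_1}}w}\big) + \Im\big(w|_{K_2}\,\overline{\partial_{n_{K_2}}w}\big)$, using $\mathbf{n}_{K_2}=-\mathbf{n}_{K_1}$. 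Summed over all internal faces this equals $\sum_K\int_{\partial K\cap F_h^I}\Im\big(w\,\overline{\partial_{n_K}w}\big)\,dS$. Now I apply Green's first identity on each element with the Trefftz property $\Delta w=-\kappa^2 w$: since $\kappa$ is real, $\int_K\big(|\nabla w|^2-\kappa^2|w|^2\big)\,dV = \int_{\partial K}\overline{w}\,\partial_{n_K}w\,dS$ has real left-hand side, whence $\int_{\partial K}\Im\big(w\,\overline{\partial_{n_K}w}\big)\,dS=0$. Splitting $\partial K$ into its interior, $\Gamma_R$ and $\Gamma_D$ parts then rewrites the internal-face flux contribution as minus the corresponding $\Im\big(w\,\overline{\partial_n w}\big)$ integrals over $\Gamma_R$ and $\Gamma_D$.

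Finally I would take the imaginary parts of the $\Gamma_R$ and $\Gamma_D$ integrals. On $\Gamma_R$ the two terms $(1-\delta)\,w\,\overline{\partial_n w}$ and $-\delta\,(\partial_n w)\overline{w}$ combine, via $\Im\big((\partial_n w)\overline{w}\big)=-\Im\big(w\,\overline{\partial_n w}\big)$, to give $\Im\big(w\,\overline{\partial_n w}\big)$, while $-\delta i\kappa^{-1}|\partial_n w|^2$ and $-i\kappa(1-\delta)|w|^2$ are purely imaginary and yield the $\Gamma_R$ norm terms; likewise $\Gamma_D$ contributes $\Im\big(w\,\overline{\partial_n w}\big)-\kappa\alpha|w|^2$. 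The $\Im\big(w\,\overline{\partial_n w}\big)$ pieces here cancel exactly against those generated in the previous step, and collecting the surviving squared terms gives $-\vertiii{w}_{\text{TDG}}^2$. The main obstacle is the bookkeeping in the third paragraph: correctly handling the normal orientations and complex conjugation so that the average–jump terms reduce to a single-valued elementwise flux, and recognizing that the Trefftz property together with the reality of $\kappa$ is precisely what forces that flux to be real.
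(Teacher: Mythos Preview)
Your proof is correct and uses essentially the same mechanism as the paper's. The paper first rewrites $A_h(u,v)$ via elementwise integration by parts into a form containing the volume terms $(\nabla_h u,\nabla_h v)_{L^2(\Omega)}-(\kappa u,\kappa v)_{L^2(\Omega)}$, so that after setting $u=v=w$ all non-penalty skeleton terms become manifestly real and the imaginary part can be read off directly; you instead set $u=v=w$ first, reduce the average--jump flux pair pointwise to the elementwise boundary flux $\Im(w\,\overline{\partial_{n_K}w})$, and then invoke Green's identity with the Trefftz property to see that this flux is real. These are two orderings of the same integration-by-parts computation, and both rely in the same way on $\kappa$ (and $\xi,\alpha,\beta,\delta$) being real.
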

\begin{proof}
    Provided that $u, v \in T (\bigtau_h)$, local integration by parts permits us to rewrite the bilinear form (\ref{forma_bilineare_PWDG}) as
    \begin{align*}
        A_h(u, v) & = (\nabla_h u, \nabla_h v)_{L^2(\Omega)} - \int_{F_h^I} \llbracket u \rrbracket_N \cdot \dlgraffa \overline{\nabla_h v}\drgraffa \hspace{0.1cm} dS - \int_{F_h^I} \dlgraffa \nabla_h u \drgraffa \cdot \llbracket \overline{v} \rrbracket_N \hspace{0.1cm} dS \\
        & - \int_{\Gamma_R} \delta \hspace{0.05cm} u \hspace{0.05cm} \overline{\nabla_h v \cdot \mathbf{n}} \hspace{0.1cm} dS - \int_{\Gamma_R} \delta \hspace{0.05cm} \nabla_h u \cdot \mathbf{n} \hspace{0.05cm} \overline{ v } \hspace{0.1cm} dS \\
        & - i \int_{F_h^I} \xi^{-1} \beta \llbracket \nabla_h u \rrbracket_N \hspace{0.05cm} \llbracket \overline{\nabla_h v} \rrbracket_N \hspace{0.1cm} dS - i \int_{\Gamma_R} \kappa^{-1} \delta \hspace{0.05cm} \nabla_h u \cdot \mathbf{n} \hspace{0.05cm} \overline{\nabla_h v \cdot \mathbf{n}} \hspace{0.1cm} dS \\
        & - i \int_{F_h^I} \xi \alpha \hspace{0.05cm} \llbracket u \rrbracket_N \cdot \llbracket \overline{v} \rrbracket_N \hspace{0.1cm} dS - i \int_{\Gamma_R} \kappa (1-\delta) \hspace{0.05cm} u \hspace{0.05cm} \overline{ v } \hspace{0.1cm} dS \\
        & - \int_{\Gamma_D} \hspace{0.05cm} \nabla_h u \cdot \mathbf{n} \hspace{0.05cm} \overline{ v } \hspace{0.1cm} dS -\int_{\Gamma_D} \hspace{0.05cm} u \hspace{0.05cm} \overline{ \nabla_h v \cdot \mathbf{n} } \hspace{0.1cm} dS \\
        & - i \int_{\Gamma_D} \kappa \alpha  \hspace{0.05cm} u \hspace{0.05cm} \overline{ v } \hspace{0.1cm} dS  - \hspace{0.05cm}(\kappa u, \kappa v)_{L^2(\Omega)}.
    \end{align*}
    
    Therefore, from the expression above we have
    \begin{align*}
        A_h(w,w) & = \| \nabla_h w \|^2_{L^2(\Omega)} - \Re \left[ \int_{F_h^I} \llbracket w \rrbracket_N \cdot \dlgraffa \overline{\nabla_h w}\drgraffa \hspace{0.1cm} dS + \int_{\Gamma_R} \delta \hspace{0.05cm} w \hspace{0.05cm} \overline{\nabla_h w \cdot \mathbf{n}} \hspace{0.1cm} dS \right] \\
        & - i \| \xi^{-\frac{1}{2}} \beta^{1/2} \llbracket \nabla_h w \rrbracket_N \|^2_{L^2(F_h^I)} - i \| \kappa^{-\frac{1}{2}} \delta^{1/2} \nabla_h w \cdot \mathbf{n} \|^2_{L^2(\Gamma_R)} \\
        & - i\| \xi^{\frac{1}{2}} \alpha^{1/2} \llbracket w \rrbracket_N \|^2_{L^2(F_h^I)} - i \| \kappa^{\frac{1}{2}} (1-\delta)^{1/2} w \|^2_{L^2(\Gamma_R)}\\
        & - \Re \left[ \int_{\Gamma_D}  \hspace{0.05cm} w \hspace{0.05cm} \overline{\nabla_h w \cdot \mathbf{n}} \hspace{0.1cm} dS \right] - i\| \kappa^{\frac{1}{2}}\alpha^{1/2} w\|^2_{L^2(\Gamma_D)} - \| \kappa w \|^2_{L^2(\Omega)},
    \end{align*}
    from which, by taking the imaginary part, we get the result.
\end{proof}
\begin{rem}
    We can observe that the well posedness of the PWDG method follows directly from Propositions \ref{prop:norm} and \ref{prop:imag}. If $A_h(u_p, v_p) = 0$ for all $v_p \in V_p(\bigtau_h)$, then we have $A_h(u_p, u_p) = 0$, and so $\vertiii{u_p}_{\text{TDG}} = 0$, which implies $u_p = 0$, since $\vertiii{\cdot}_{\text{TDG}}$ is a norm on the Trefftz space.
\end{rem}
We also have a continuity property of the TDG sesquilinear form.
\begin{prop}
    There exists a constant $C > 0$ independent of $h, p$, and $k$ such that, for all $v, w \in H^2(\bigtau_h)$,
    \begin{equation*}
        |A_h(v,w)| \leq C \vertiii{v}_{\text{TDG}^+} \hspace{0.1cm} \vertiii{w}_{\text{TDG}} \hspace{0.05cm}.
    \end{equation*}
\end{prop}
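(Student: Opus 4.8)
The plan is to prove the bound by estimating $A_h(v,w)$ one summand at a time, since the sesquilinear form (\ref{forma_bilineare_PWDG}) is a finite sum of surface integrals over $F_h^I$, $\Gamma_R$ and $\Gamma_D$, each of which couples a trace of $v$ with a trace of $w$. For every such integrand I would factor its real positive coefficient as a product of two square-root weights and insert it as $1=(\text{weight})\cdot(\text{weight})^{-1}$, choosing the split so that the factor attached to $v$ is exactly one of the weighted traces measured by $\vertiii{\cdot}_{\text{TDG}^+}$ and the factor attached to $w$ is exactly one of those measured by $\vertiii{\cdot}_{\text{TDG}}$. The Cauchy--Schwarz inequality on the relevant part of the skeleton then bounds each term by a product of one $\vertiii{\cdot}_{\text{TDG}^+}$-quantity of $v$ and one $\vertiii{\cdot}_{\text{TDG}}$-quantity of $w$. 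No integration by parts is needed: I would work directly from the flux form, exploiting that the two norms have been designed precisely so that the coefficients $\xi^{-1}\beta$, $\xi\alpha$, $\kappa^{-1}\delta$, $\kappa(1-\delta)$, $\kappa\alpha$, and the plain averages, each split into a matching pair of weights.

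For the four integrals over $F_h^I$ the pairings are, schematically,
\begin{align*}
    \dlgraffa v \drgraffa \, \llbracket \overline{\nabla_h w} \rrbracket_N &= \bigl(\xi^{\frac{1}{2}}\beta^{-\frac{1}{2}}\dlgraffa v\drgraffa\bigr)\bigl(\xi^{-\frac{1}{2}}\beta^{\frac{1}{2}}\llbracket\overline{\nabla_h w}\rrbracket_N\bigr), \\
    \xi^{-1}\beta\,\llbracket\nabla_h v\rrbracket_N\,\llbracket\overline{\nabla_h w}\rrbracket_N &= \bigl(\xi^{-\frac{1}{2}}\beta^{\frac{1}{2}}\llbracket\nabla_h v\rrbracket_N\bigr)\bigl(\xi^{-\frac{1}{2}}\beta^{\frac{1}{2}}\llbracket\overline{\nabla_h w}\rrbracket_N\bigr),
\end{align*}
and analogously the average-of-gradient term pairs $\xi^{-\frac{1}{2}}\alpha^{-\frac{1}{2}}\dlgraffa\nabla_h v\drgraffa$ with $\xi^{\frac{1}{2}}\alpha^{\frac{1}{2}}\llbracket\overline{w}\rrbracket_N$, while the $\xi\alpha$ term pairs two copies of $\xi^{\frac{1}{2}}\alpha^{\frac{1}{2}}\llbracket\cdot\rrbracket_N$. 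On $\Gamma_R$ the impedance and normal-derivative terms split with the weights $\kappa^{\pm\frac{1}{2}}\delta^{\pm\frac{1}{2}}$ and $\kappa^{\frac{1}{2}}(1-\delta)^{\frac{1}{2}}$, and on $\Gamma_D$ with $\kappa^{\pm\frac{1}{2}}\alpha^{\pm\frac{1}{2}}$, always so that the $v$-weight is one recorded in $\vertiii{\cdot}_{\text{TDG}^+}$ --- in particular the extra $F_h^I$-average terms, the $\Gamma_D$ normal-derivative term, and the $\kappa^{\frac{1}{2}}\delta^{-\frac{1}{2}}v$ term on $\Gamma_R$, all present only in the ``$+$'' norm --- and the $w$-weight is one recorded in $\vertiii{\cdot}_{\text{TDG}}$. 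Whenever $\kappa$ is complex I read the weighted norms with the modulus $|\kappa|$, so that the paired $\kappa$-powers still cancel.

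Two summands need a little care beyond a bare Cauchy--Schwarz. First, for $\int_{\Gamma_R}(1-\delta)\,v\,\overline{\nabla_h w\cdot\mathbf{n}}$ I would pair $\kappa^{\frac{1}{2}}\delta^{-\frac{1}{2}}(1-\delta)v$ with $\kappa^{-\frac{1}{2}}\delta^{\frac{1}{2}}\overline{\nabla_h w\cdot\mathbf{n}}$ and absorb $(1-\delta)\leq 1$, bounding the $v$-factor by $\|\kappa^{\frac{1}{2}}\delta^{-\frac{1}{2}}v\|_{L^2(\Gamma_R)}\leq\vertiii{v}_{\text{TDG}^+}$. Second, the term $\int_{\Gamma_R}\delta\,\nabla_h v\cdot\mathbf{n}\,\overline{w}$ produces a $w$-factor $\kappa^{\frac{1}{2}}\delta^{\frac{1}{2}}w$, whereas $\vertiii{\cdot}_{\text{TDG}}$ only controls $\kappa^{\frac{1}{2}}(1-\delta)^{\frac{1}{2}}w$ on $\Gamma_R$; here I would invoke the standing assumption $0<\delta\leq 1/2$, which gives $\delta\leq 1-\delta$ and hence $\|\kappa^{\frac{1}{2}}\delta^{\frac{1}{2}}w\|_{L^2(\Gamma_R)}\leq\|\kappa^{\frac{1}{2}}(1-\delta)^{\frac{1}{2}}w\|_{L^2(\Gamma_R)}\leq\vertiii{w}_{\text{TDG}}$. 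This is the only place where $\delta\leq 1/2$ is essential, and I expect this weight mismatch to be the main (and rather mild) obstacle; the rest is bookkeeping in matching weights.

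Finally, summing the resulting bounds over the ten terms and using the triangle inequality yields $|A_h(v,w)|\leq C\,\vertiii{v}_{\text{TDG}^+}\vertiii{w}_{\text{TDG}}$ with $C$ no larger than the number of terms; a single discrete Cauchy--Schwarz over the summands would even remove the dependence on that count. In either case $C$ depends only on the fixed flux parameters $\alpha,\beta,\delta$ and not on $h$, $p$ or $k$, which is exactly the claim.
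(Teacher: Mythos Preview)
Your proposal is correct and follows exactly the route the paper takes: the paper's proof consists of the single sentence ``This result follows from the definition of $A_h(\cdot,\cdot)$, $(1-\delta)^{-1/2}\leq\delta^{-1/2}$, and repeated applications of the (weighted) Cauchy--Schwarz inequality.'' Your term-by-term weight-splitting is precisely that Cauchy--Schwarz argument written out, and your use of $\delta\leq 1-\delta$ for the problematic $\Gamma_R$ term is equivalent to the paper's $(1-\delta)^{-1/2}\leq\delta^{-1/2}$.
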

\begin{proof}
    This result follows from the definition of $A_h(\cdot, \cdot)$, $(1 - \delta)^{-1/2} \leq \delta^{-1/2}$, and repeated applications of the (weighted) Cauchy–Schwarz inequality.
\end{proof}
In the next proposition, we prove quasi-optimality of the PWDG method in the $\vertiii{\cdot}_{\text{TDG}}$-norm.
\begin{prop}
    Let u be the analytical solution to (\ref{eqn: Helmholtz}) and let up be the PWDG solution. Then, there exists a constant $C > 0$ independent of $h, p$, and $k$ such that
    \begin{equation*}
        \vertiii{u-u_p}_{\text{TDG}} \leq C \inf_{v_p \in V_p(\bigtau_h)} \vertiii{u-v_p}_{\text{TDG}^+}\hspace{0.05cm}.
    \end{equation*}
\end{prop}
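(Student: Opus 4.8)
The plan is to establish quasi-optimality by combining the three structural properties of the sesquilinear form $A_h$ that have just been assembled: consistency (the earlier Proposition), the Gårding-type coercivity from Proposition~\ref{prop:imag}, and the continuity bound from the previous Proposition. The target inequality $\vertiii{u-u_p}_{\text{TDG}} \leq C \inf_{v_p} \vertiii{u-v_p}_{\text{TDG}^+}$ is of the standard Céa-lemma form, so the skeleton is the classical Galerkin-orthogonality argument adapted to the fact that the two norms $\vertiii{\cdot}_{\text{TDG}}$ and $\vertiii{\cdot}_{\text{TDG}^+}$ differ.

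First I would record Galerkin orthogonality: since $u$ solves \eqref{eqn: Helmholtz} and $u_p$ is the PWDG solution, consistency gives $A_h(u,v_p)=L_h(v_p)=A_h(u_p,v_p)$ for every $v_p\in V_p(\bigtau_h)$, hence $A_h(u-u_p,v_p)=0$. Next, fix an arbitrary $v_p\in V_p(\bigtau_h)$ and split the error as $u-u_p=(u-v_p)+(v_p-u_p)$, writing $\eta:=u-v_p$ and $\zeta_p:=v_p-u_p\in V_p(\bigtau_h)$. The key point is that $\zeta_p$ is a \emph{discrete} function lying in the Trefftz space, so Proposition~\ref{prop:imag} applies to it.

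The heart of the estimate is then the chain
\begin{equation*}
    \vertiii{\zeta_p}_{\text{TDG}}^2 = -\Im[A_h(\zeta_p,\zeta_p)] = -\Im[A_h(u-u_p,\zeta_p)] + \Im[A_h(\eta,\zeta_p)] = \Im[A_h(\eta,\zeta_p)],
\end{equation*}
where the first equality is Proposition~\ref{prop:imag}, the middle step uses $\zeta_p = (u-u_p)-\eta$ by linearity in the first argument, and the last uses Galerkin orthogonality to kill the $A_h(u-u_p,\zeta_p)$ term. Applying the continuity bound to $|A_h(\eta,\zeta_p)|$ yields $\vertiii{\zeta_p}_{\text{TDG}}^2 \leq C\vertiii{\eta}_{\text{TDG}^+}\vertiii{\zeta_p}_{\text{TDG}}$, and dividing through gives $\vertiii{\zeta_p}_{\text{TDG}} \leq C\vertiii{\eta}_{\text{TDG}^+}$. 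Finally, a triangle inequality in the $\vertiii{\cdot}_{\text{TDG}}$-norm closes the argument: $\vertiii{u-u_p}_{\text{TDG}} \leq \vertiii{\eta}_{\text{TDG}} + \vertiii{\zeta_p}_{\text{TDG}} \leq \vertiii{\eta}_{\text{TDG}} + C\vertiii{\eta}_{\text{TDG}^+} \leq (1+C)\vertiii{\eta}_{\text{TDG}^+}$, the last step because $\vertiii{\cdot}_{\text{TDG}} \leq \vertiii{\cdot}_{\text{TDG}^+}$ by definition of the augmented norm. Taking the infimum over $v_p$ completes the proof.

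The only genuinely delicate point I expect is bookkeeping around which norm governs which factor: the continuity estimate is deliberately asymmetric, pairing the $\vertiii{\cdot}_{\text{TDG}^+}$-norm on the first argument against the $\vertiii{\cdot}_{\text{TDG}}$-norm on the second, so I must be careful to place $\eta$ (which I can only control in $\text{TDG}^+$) in the first slot and the discrete $\zeta_p$ (measured in $\text{TDG}$) in the second. One must also confirm that $A_h$ is linear—not merely sesquilinear—in its first argument so that the splitting $A_h(u-u_p,\zeta_p)=A_h(\eta,\zeta_p)+A_h(\zeta_p,\zeta_p)$ is legitimate; this is immediate from the definition \eqref{forma_bilineare_PWDG}. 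No regularity beyond $u\in H^2(\bigtau_h)$ is needed for the continuity bound to apply to $\eta$, which is guaranteed by the standing assumption $u\in H^{3/2+s}(\Omega)$.
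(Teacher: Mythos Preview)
Your proof is correct and follows essentially the same approach as the paper: triangle inequality to split $u-u_p$ into $\eta=u-v_p$ and $\zeta_p=v_p-u_p$, then Proposition~\ref{prop:imag} applied to the discrete part $\zeta_p$, Galerkin orthogonality to replace $A_h(\zeta_p,\zeta_p)$ by $A_h(\eta,\zeta_p)$ (up to sign), and the asymmetric continuity estimate to conclude. The only cosmetic difference is that the paper applies the triangle inequality first and you apply it last; your careful remark about which argument of $A_h$ must carry the $\vertiii{\cdot}_{\text{TDG}^+}$-norm is exactly the point.
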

\begin{proof}
    We apply the triangle inequality and write, for all  $v_p \in V_p(\bigtau_h)$,
    \begin{equation*}
        \vertiii{u-u_p}_{\text{TDG}} \leq \vertiii{u-v_p}_{\text{TDG}} + \vertiii{v_p-u_p}_{\text{TDG}}\hspace{0.05cm}.
    \end{equation*}
    Since $u_p - v_p \in T(\bigtau_h)$, Proposition \ref{prop:imag} gives
    \begin{equation*}
        \vertiii{u_p-v_p}_{\text{TDG}}^2 = -\Im \hspace{0.05cm} [ A_h(u_p-v_p,u_p-v_p) ].
    \end{equation*}
    From Galerkin orthogonality and continuity of $A_h(\cdot, \cdot)$, we have
    \begin{equation*}
        \vertiii{u_p-v_p}_{\text{TDG}}^2 \leq \vertiii{u-v_p}_{\text{TDG}^+} \hspace{0.1cm} \vertiii{u-v_p}_{\text{TDG}}\hspace{0.05cm},
    \end{equation*}
    which gives us the result.
\end{proof}
It is possible to bound the $L^2$-norm of any Trefftz function by using a duality argument and the convexity of the domain $\Omega$. We recall that $k$ is assumed to be constant.
\begin{lem} \label{lem:L2}
    Under the assumptions of the duality argument in \cite{PWDG}, there exists a constant $C > 0$ independent of $h, p$, and $k$ such that, for any $w \in T (\bigtau_h)$,
    \begin{equation*}
        \| w \|_{L^2(\Omega)} \leq C \operatorname{diam}(\Omega)\left( k^{-1/2} h^{-1/2} + k^{1/2} h^{1/2} \right) \vertiii{w}_{\text{TDG}}\hspace{0.05cm}.
    \end{equation*}
\end{lem}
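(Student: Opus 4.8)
The plan is to run an Aubin--Nitsche (duality) argument, exactly as in \cite{PWDG}, in which the $L^2$-norm of the Trefftz function $w$ is tested against the solution of a dual Helmholtz problem whose regularity is controlled $k$-explicitly by the convexity of $\Omega$. First I would introduce the dual solution $\varphi$ of
\[
    -\Delta\varphi - \kappa^2\varphi = w \quad\text{in }\Omega,\qquad \varphi = 0\ \text{on }\Gamma_D,\qquad \nabla\varphi\cdot\mathbf{n} - i\kappa\varphi = 0\ \text{on }\Gamma_R,
\]
i.e.\ the adjoint of (\ref{eqn: Helmholtz}) with the volume source $w$. Because $\Omega$ is convex (assumption 1), this problem is $H^2$-regular, and the duality argument imported from \cite{PWDG} provides a stability bound controlling $\|\varphi\|_{L^2(\Omega)}$, $\|\nabla\varphi\|_{L^2(\Omega)}$ and $\|D^2\varphi\|_{L^2(\Omega)}$ in terms of $\|w\|_{L^2(\Omega)}$, with explicit dependence on $k$ and $\operatorname{diam}(\Omega)$.

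Next I would write $\|w\|_{L^2(\Omega)}^2 = \int_\Omega \overline{w}\,(-\Delta\varphi - \kappa^2\varphi)\,dV$ and integrate by parts element by element. Since $w \in T(\bigtau_h)$ solves $-\Delta w - \kappa^2 w = 0$ on each $K$, Green's second identity makes all the volume contributions cancel and leaves only terms on $\partial K$. Summing over the mesh and using that the dual solution $\varphi$ is single-valued and $H^2$-regular (so $\llbracket\varphi\rrbracket_N = 0$ and $\llbracket\nabla_h\varphi\rrbracket_N = 0$), the element-boundary sum collapses to skeleton and physical-boundary integrals,
\[
    \|w\|_{L^2(\Omega)}^2 = \int_{F_h^I}\Big(\llbracket \nabla_h \overline{w}\rrbracket_N\,\dlgraffa\varphi\drgraffa - \llbracket \overline{w}\rrbracket_N\cdot\dlgraffa\nabla\varphi\drgraffa\Big)\,dS + (\text{analogous terms on }\Gamma_D\cup\Gamma_R),
\]
in which every occurrence of $w$ appears through one of the jumps weighted in $\vertiii{\cdot}_{\text{TDG}}$.

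Then I would estimate each term by a weighted Cauchy--Schwarz inequality, splitting the flux weights $\xi^{\pm1/2},\alpha^{\pm1/2},\beta^{\pm1/2}$ (all $O(1)$ by assumption 5) so that one factor reproduces exactly a component of $\vertiii{w}_{\text{TDG}}$ and the other is an $L^2$-trace of $\varphi$ or of $\nabla\varphi$ on the skeleton. To control those traces I would invoke the multiplicative trace/inverse inequality on each element, $\|\phi\|_{L^2(\partial K)}^2 \lesssim h_K^{-1}\|\phi\|_{L^2(K)}^2 + h_K\|\nabla\phi\|_{L^2(K)}^2$, valid under shape-regularity (assumption 3), applied to $\phi = \varphi$ and to the components of $\phi = \nabla\varphi$. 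Summing over elements and using quasi-uniformity ($h_K \simeq h$, assumption 4) turns these into the global norms $\|\varphi\|_{L^2(\Omega)}$, $\|\nabla\varphi\|_{L^2(\Omega)}$, $\|D^2\varphi\|_{L^2(\Omega)}$ multiplied by powers $h^{-1/2}$ or $h^{1/2}$.

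Finally I would substitute the $k$-explicit regularity bound for $\varphi$, collect the resulting powers of $k$ and $h$, and divide through by $\|w\|_{L^2(\Omega)}$ to obtain the factor $\operatorname{diam}(\Omega)\,(k^{-1/2}h^{-1/2} + k^{1/2}h^{1/2})$. The main obstacle is not the integration by parts or the Cauchy--Schwarz steps, which are routine, but keeping the dependence on $k$ and $\operatorname{diam}(\Omega)$ sharp throughout: the two competing terms $k^{-1/2}h^{-1/2}$ and $k^{1/2}h^{1/2}$ arise precisely from pairing the $k^{\mp1/2}$ weights of the two jump seminorms against the $h^{\mp1/2}$ factors coming from the trace inequality, so the bookkeeping has to match each regularity estimate of $\varphi$ to the correct jump term. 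This careful $k$- and $h$-tracking, together with the convexity-based $H^2$ stability estimate of \cite{PWDG}, is the heart of the argument.
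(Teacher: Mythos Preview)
The paper does not actually prove this lemma: it states the result and defers to the duality argument of \cite{PWDG}. Your outline is exactly that argument---Aubin--Nitsche with a $k$-explicit $H^2$ regularity bound for the adjoint problem on the convex domain, elementwise Green's identity exploiting the Trefftz property to reduce to skeleton integrals, weighted Cauchy--Schwarz against the components of $\vertiii{\cdot}_{\text{TDG}}$, and the multiplicative trace inequality under shape-regularity/quasi-uniformity to convert traces of $\varphi$ and $\nabla\varphi$ into volume norms with the factors $h^{\pm1/2}$---so your proposal matches the approach the paper relies on. One small point: the adjoint impedance condition should carry the opposite sign, $\nabla\varphi\cdot\mathbf{n}+i\kappa\varphi=0$ on $\Gamma_R$, so that the $\Gamma_R$ boundary terms pair correctly with the $\delta$- and $(1-\delta)$-weighted pieces of the TDG norm; this does not alter the regularity estimate or the final bound.
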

We apply Lemma \ref{lem:L2} to $u-u_p \in T (\bigtau_h)$ and bound the $L^2$-norm of the error by its $\vertiii{\cdot}_{\text{TDG}}$-norm.
\begin{corol}
    Under the assumptions of the duality argument in \cite{PWDG}, let $u$ be the analytical solution to (\ref{eqn: Helmholtz}) and let $u_p$ be the PWDG solution. Then, there exists a constant $C > 0$ independent of $h, p$, and $k$ such that
    \begin{equation*}
        \| u-u_p \|_{L^2(\Omega)} \leq C \operatorname{diam}(\Omega)\left( k^{-1/2} h^{-1/2} + k^{1/2} h^{1/2} \right) \vertiii{u-u_p}_{\text{TDG}}\hspace{0.05cm}.
    \end{equation*}
\end{corol}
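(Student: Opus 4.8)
The plan is to recognize that this Corollary is nothing more than Lemma \ref{lem:L2} applied to the single Trefftz function $w := u - u_p$, so the entire task reduces to verifying that the error belongs to the Trefftz space $T(\bigtau_h)$ and then invoking the Lemma verbatim. First I would set $w = u - u_p$ and check the two defining conditions of $T(\bigtau_h)$ in (\ref{broken_sob}). For the PDE condition, note that $u$, being the analytical solution of (\ref{eqn: Helmholtz}), satisfies $-\Delta u - \kappa^2 u = 0$ throughout $\Omega$, hence in particular on the interior of each element $K \in \bigtau_h$; and $u_p \in V_p(\bigtau_h) \subset T(\bigtau_h)$ satisfies $-\Delta u_p - \kappa^2 u_p = 0$ on each $K$ by construction (each plane wave solves the homogeneous Helmholtz equation with wavenumber $\kappa$). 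Subtracting, $-\Delta w - \kappa^2 w = 0$ on each $K$, so $w$ is elementwise a Helmholtz solution.

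Next I would address the regularity requirements, namely $w \in H^1(\bigtau_h)$ with $\partial_{\mathbf{n}_K} w \in L^2(\partial K)$ for every $K$. Here I would lean on the standing assumption from Section \ref{dg} that $u \in H^{3/2+s}(\Omega)$ for some $s > 0$; this guarantees that the elementwise normal trace $\partial_{\mathbf{n}_K} u$ lies in $L^2(\partial K)$, while $u_p$ is a finite combination of plane waves and is therefore smooth, so $\partial_{\mathbf{n}_K} u_p \in L^2(\partial K)$ as well. Combining, $w \in T(\bigtau_h)$.

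With membership established, the conclusion is immediate: I would apply Lemma \ref{lem:L2} with the choice $w = u - u_p$, which yields directly
\begin{equation*}
    \| u - u_p \|_{L^2(\Omega)} \leq C \operatorname{diam}(\Omega)\left( k^{-1/2} h^{-1/2} + k^{1/2} h^{1/2} \right) \vertiii{u - u_p}_{\text{TDG}},
\end{equation*}
with the same constant $C$ independent of $h$, $p$, and $k$ supplied by the Lemma.

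Since the real content is already packaged inside Lemma \ref{lem:L2} (the duality argument and the convexity of $\Omega$), the only genuine step is the membership check, and the mildly delicate point there is the $L^2$-regularity of the normal trace of $w$ on element boundaries. This is exactly why the ambient regularity hypothesis $u \in H^{3/2+s}(\Omega)$ was imposed earlier, so I expect no real obstacle: once $u - u_p \in T(\bigtau_h)$ is confirmed, the estimate follows with no further computation.
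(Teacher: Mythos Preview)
Your proposal is correct and matches the paper's approach exactly: the paper simply states ``We apply Lemma \ref{lem:L2} to $u-u_p \in T(\bigtau_h)$'' and records the resulting bound, without spelling out the membership check. Your additional verification that $u - u_p$ lies in $T(\bigtau_h)$ is a welcome elaboration of what the paper leaves implicit.
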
 \noindent

\section{Convergence Experiments}
In this section, we numerically investigate the $p$-convergence of the PWDG method for regular and singular solutions of the Helmholtz equation in two dimensions. We consider the case of only impedance boundary conditions, so we assume $\Gamma_R = \partial \Omega$ and $\Gamma_D = \emptyset$, as we try to reproduce the results in \cite{PWDG}. We also assume that $\varepsilon$ is constant and equal to $1$ in all $\Omega$, so we have $\kappa=k$.

For $p$-convergence, we assume the mesh $\bigtau_h$ to be fixed, and we vary only the number of of plane wave directions $p$.

We consider a square domain $\Omega = [0, 1] \times [-0.5, 0.5]$, partitioned by a mesh consisting of eight triangles, so that $h=1 / \sqrt{2}$. We fix $k=10$, such that an entire wavelength $\lambda = 2\pi/\omega \simeq 0.628$ is completely contained in $\Omega$.

\subsection{Circular waves}
\begin{figure}[h]
\centering
\subfloat[][\emph{The triangulation on $\Omega$}]
{\includegraphics[width=.4\textwidth]{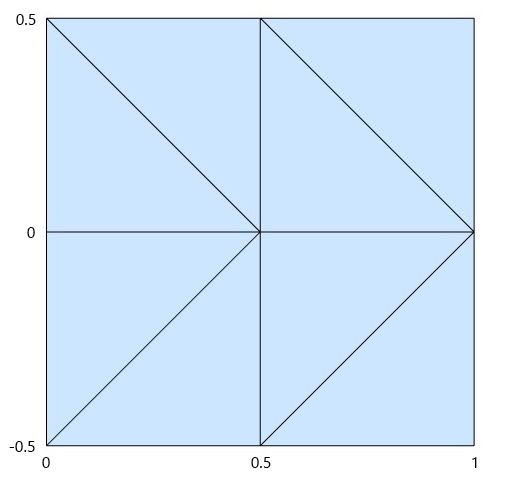}} \quad
\subfloat[][\emph{$\xi=1$}]
{\includegraphics[width=.4\textwidth]{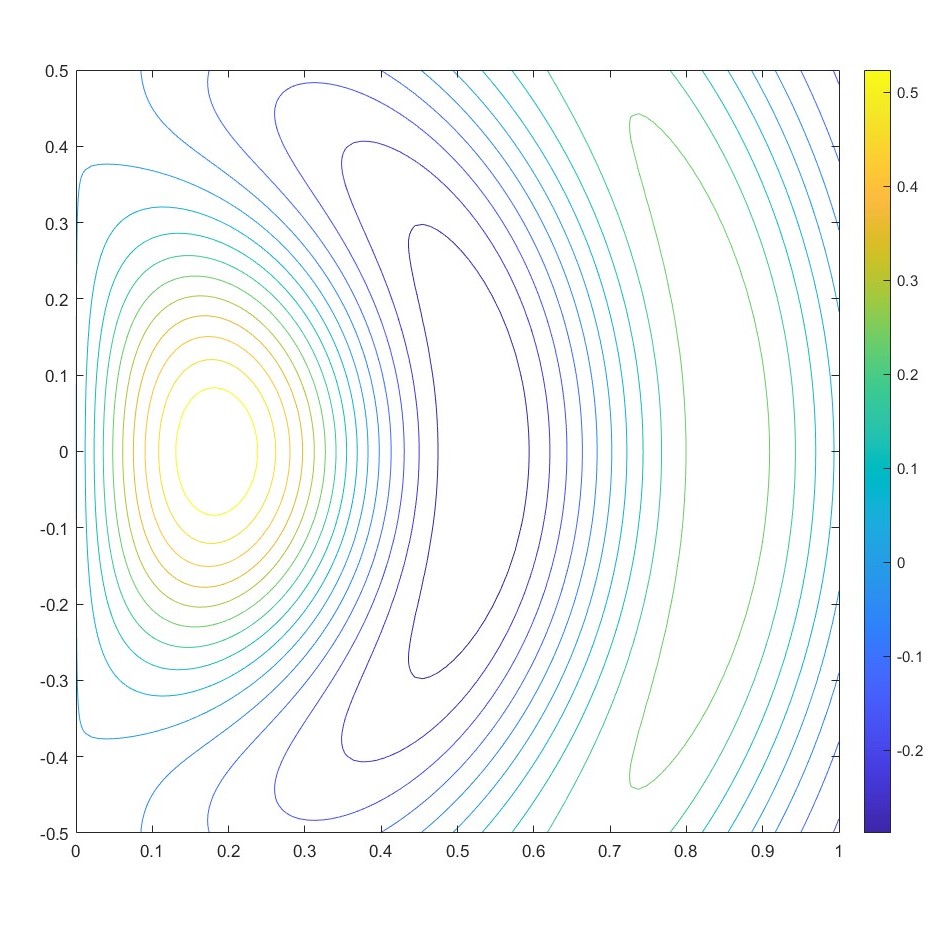}} \quad
\subfloat[][\emph{$\xi=2/3$}]
{\includegraphics[width=.4\textwidth]{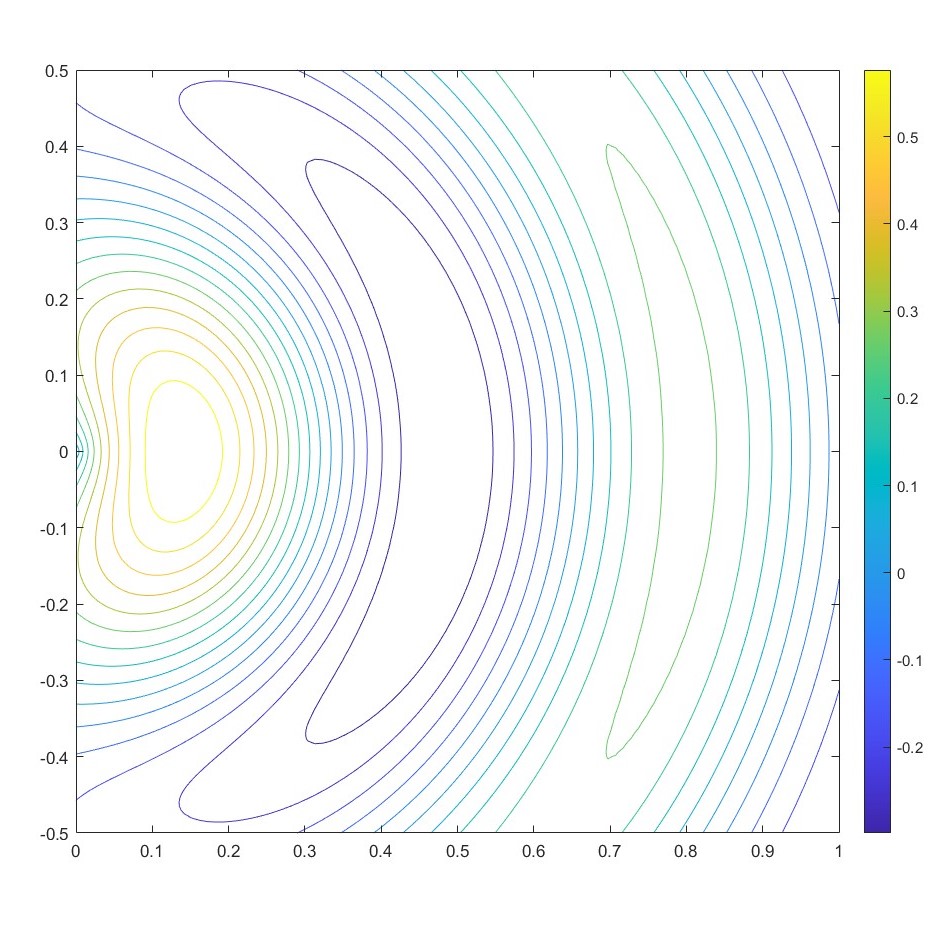}} \quad
\subfloat[][\emph{$\xi=3/2$}]
{\includegraphics[width=.4\textwidth]{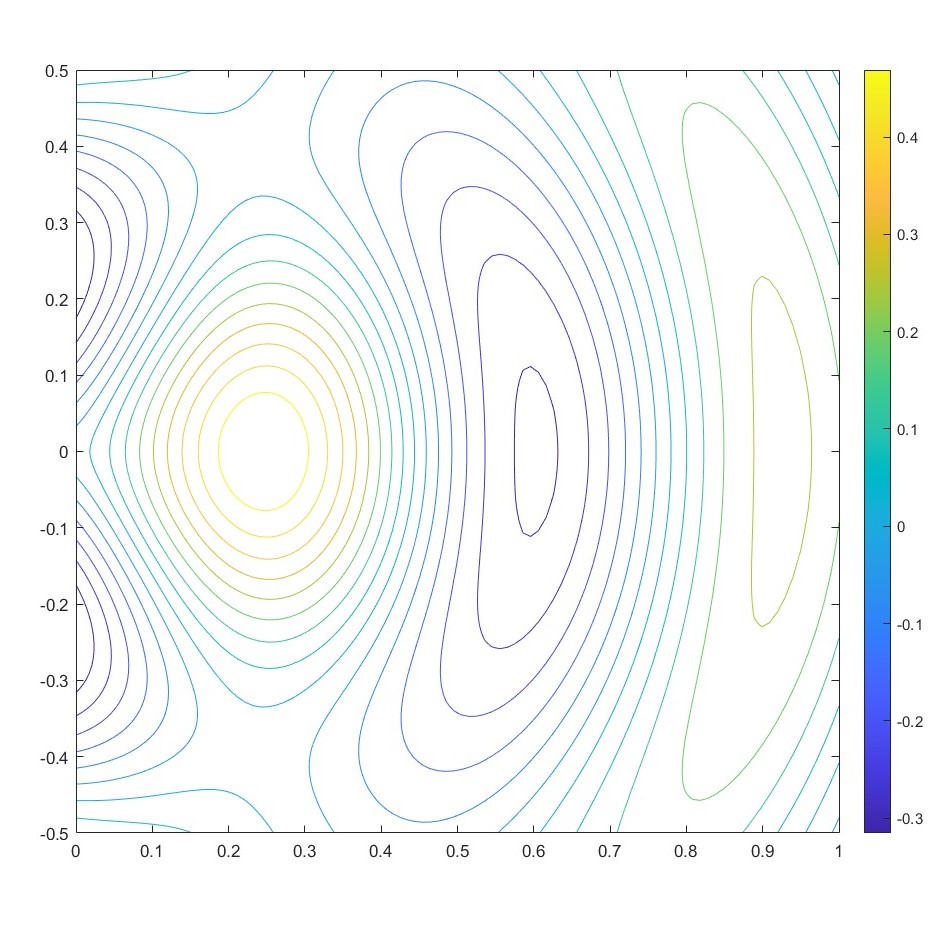}} \quad
\caption{The domain triangulation and the analytical solutions for the different values of $\xi$ and $k = 10$}
\label{fig:contours}
\end{figure} \noindent
\noindent We choose the impedance boundary conditions in such a way that the analytical solutions are the circular waves given, in polar coordinates $\mathbf{x} = (r \cos \theta, r \sin \theta)$, by
\begin{equation*}
    u(\mathbf{x}) = J_\xi (kr) \cos(\xi \theta),
\end{equation*}
where, for $\xi \geq 0$, $J_\xi$ denotes the Bessel function of the first kind and order $\xi$. If $\xi \in \mathbb{N}$, $u$ can be analytically extended to a Helmholtz solution in $\mathbb{R}^2$, while, if $\xi \not\in \mathbb{N}$, its derivatives have a singularity at the origin. In figure \ref{fig:contours} we display the profiles of the analytical solutions corresponding to these three cases.
\begin{figure}[h]
\centering
\subfloat[][\emph{$\xi=1$}]
{\includegraphics[width=.48\textwidth]{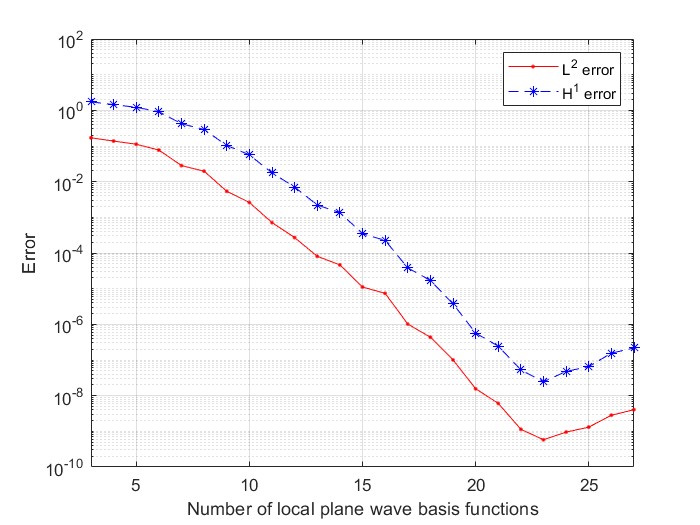}} \quad
\subfloat[][\emph{$\xi=2/3$}]
{\includegraphics[width=.48\textwidth]{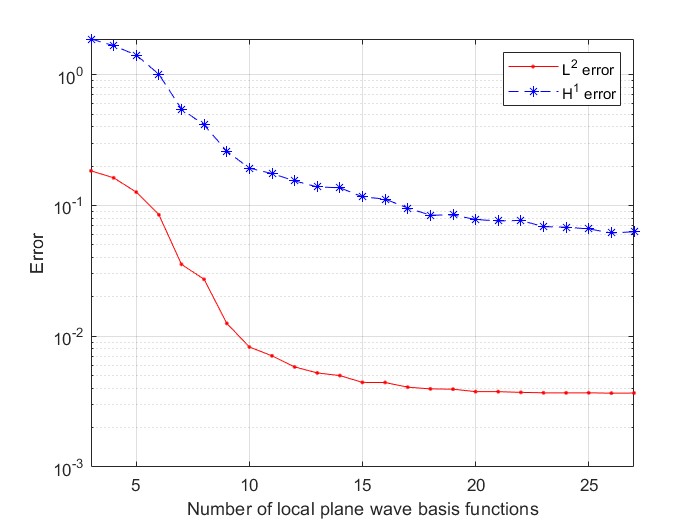}} \quad
\subfloat[][\emph{$\xi=3/2$}]
{\includegraphics[width=.48\textwidth]{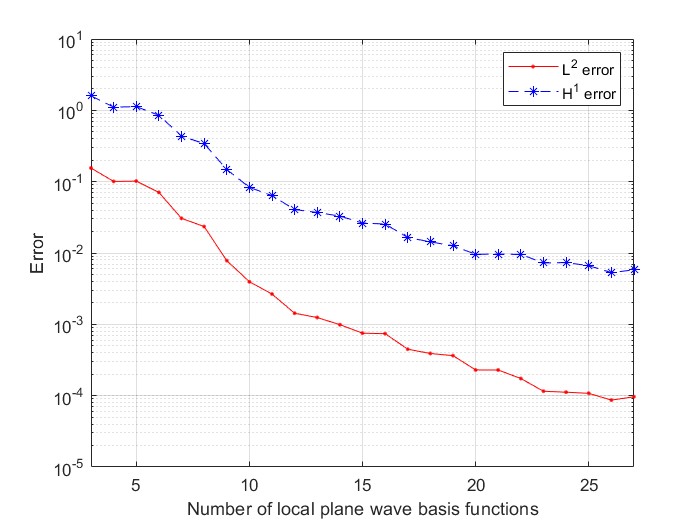}} \quad
\caption{The errors in $L^2$ and $H^1$ norm for the different values of $\xi$ and $p \in \{3,...,27\}$}
\label{fig:errors}
\end{figure}
\\
We compute the numerical solutions in the regular case $\xi = 1$ and in the singular cases $\xi = 2/3$ and $\xi = 3/2$ with $k=10$ in all the experiments. The system matrix is computed exactly using the formulas presented above. To compute the right hand side of the linear system, we use a Gauss-Legendre quadrature rule with $10$ quadrature points on each mesh edge.\\
We consider as numerical fluxes those corresponding to the original ultra weak variational formulation (UWVF) of Cessenat and Després \cite{UWVF}, which are $\alpha=\beta=\delta=\frac{1}{2}$ and we compute the $L^2$ and $H^1$ error of the numerical solution against the exact one. To evaluate the integrals over the triangles of the mesh, we use a Duffy quadrature rule, which is presented in \cite{Duffy}.

In figure \ref{fig:errors} we display the error of the numerical solution against the exact one for different values of the number of plane waves $p$, from $3$ to $27$. The results are comparable to those in \cite{PWDG}.

These plots highlight three different regimes for increasing $p$: a first preasymptotic region with slow convergence, a second region of faster convergence, and finally a third region with a sudden stalling of convergence, due to the impact of round-off and ill-conditioning of the linear system.

\newpage
\subsection{Approximation of a plane wave}
\begin{figure}[h]
    \centering
    \includegraphics[width=.55\textwidth]{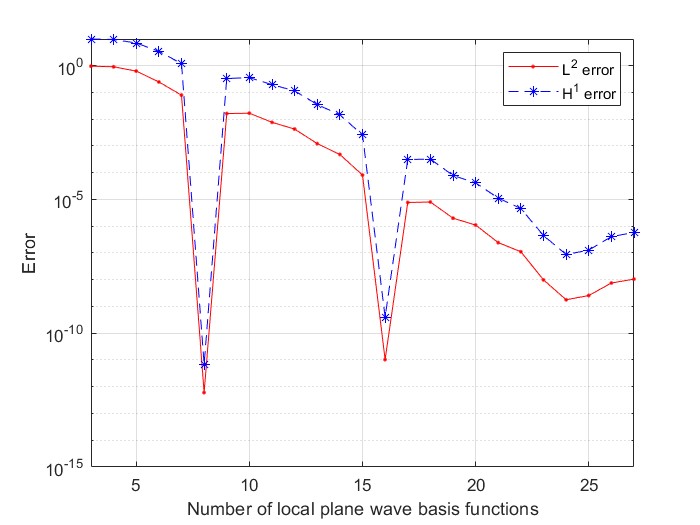}
    \caption{The errors in $L^2$ and $H^1$ norm for $p \in \{3,...,27\}$}
    \label{fig:planewave}
\end{figure}
\noindent Another simple convergence test we can perform with impedance boundary condition is such that the exact solution is a plane wave with direction $\mathbf{d}$ and $k=10$. We choose the direction of the wave in the way that $\mathbf{d}$ is contained in the ensemble of directions only for certain values of $p$. We expect that for this particular values, the numerical solution would be quasioptimal, as the exact solution is part of the discrete space.

In the numerical test we choose $\mathbf{d} = \left( \frac{\sqrt{2}}{2}, \frac{\sqrt{2}}{2} \right)$, so that the exact solution is part of the discrete space only if $p$ is a multiple of $8$. In Figure \ref{fig:planewave} we display the errors obtained in the numerical test. As we expected, for some values of $p$ the error decreases drastically, as we can observe in the figure for $p=8$, since for this values the exact solution belongs to the plane wave space. Increasing the number of plane wave directions we observe again the impact of round-off and ill-conditioning.

\section{PWDG method with different values of $\varepsilon$}
We now test the PWDG method in a region where $\varepsilon$ is not constant. The equation is the following
\begin{equation}
    \begin{cases}
        \hspace{0.1cm}-\Delta u - \kappa^2 u = 0 &  \text{in} \hspace{0.1cm}\Omega, \\
        \hspace{0.1cm}\nabla u \cdot n - i\kappa u = g_R  & \text{on} \hspace{0.1cm} \partial \Omega,
    \end{cases}
\end{equation}
with $\Omega=[0,2\pi]\times [-H,H]$, with $H>0$, $\kappa=k\sqrt{\varepsilon}$, with $k$ constant and $\varepsilon=\varepsilon(\mathbf{x})$ piecewise constant in $\Omega$. Since the value of $\varepsilon$ changes between the regions, also the basis functions of the method are going to be different.

As usual we introduce a finite element partition $\bigtau_h = \{ K \}$ of $\Omega$, which has to be suited to the conformation of the scatterer, i.e. the relative permittivity $\varepsilon$ has to be constant on each element of the mesh. Consider an element $K \in \bigtau_h$; we denote by $V^n_p(K)$ the plane wave space on K spanned by $p$ plane waves, $p \in \mathbb{N}$:
\begin{align*}
    V^n_p(K) & = \{ \hspace{0.1cm} v \in L^2(K) :  v(\mathbf{x}) = \sum_{j=1}^p \alpha_j \exp\{ik \sqrt{\varepsilon_n}\mathbf{d_j} \cdot \mathbf{x}\}, \hspace{0.3cm} \alpha_j \in \mathbb{C} \hspace{0.1cm} \},
\end{align*}
where $\{ \mathbf{d}_j \}_{j=1}^p$, with $|\mathbf{d_j}|=1$ are different directions and $\kappa_n=k \sqrt{\varepsilon_n}$ is the value of the relative permittivity in the region where $K$ is located.

As already seen, we can write the discrete problem as a linear system. We still have three different cases, with the first two (the case of two functions associated to the same element) remaining the same, with the only difference being that we have to change the values of all $k$ appearing in the matrix entries.

In the matrix entries corresponding to two basis functions associated to adjacent elements, we have to be careful because the wave numbers could be different. Here $\varphi_l, \varphi_j$ are associated to two adjacent elements $K, K'$ and they have respectively $\kappa_1$ and $\kappa_2$ as wave numbers, meaning that
\begin{align*}
    & \varphi_l(\mathbf{x}) = \exp\{ i\kappa_1\mathbf{x}\cdot\mathbf{d}_l \}, & & \nabla\varphi_l(\mathbf{x}) = i\kappa_1\mathbf{d}_l \exp\{ ik_1\mathbf{x}\cdot\mathbf{d}_l \}=i\kappa_1\mathbf{d}_l \hspace{0.05cm} \varphi_l(\mathbf{x}), \\
    & \varphi_j(\mathbf{x}) = \exp\{ i\kappa_2\mathbf{x}\cdot\mathbf{d}_j \}, & & \nabla\varphi_j(\mathbf{x}) = i\kappa_2\mathbf{d}_j \exp\{ ik_2\mathbf{x}\cdot\mathbf{d}_j \}=i\kappa_2\mathbf{d}_j \hspace{0.05cm} \varphi_j(\mathbf{x}),
\end{align*} 
where $\varphi_l$ is defined $K$ and $\varphi_j$ on $K'$.

We have to integrate only over the face $F$ common to the two elements. We use the definition of the numerical fluxes in (\ref{flux_interno}). The matrix entry becomes:
\begin{flalign*}
    A_{j,l}=\int_{F} & \big[ \hspace{0.05cm} \{ \hspace{-0.1cm} \{ \varphi_l \} \hspace{-0.1cm} \} \llbracket \overline{\nabla \varphi_j} \rrbracket_N  - i\xi^{-1} \left( \beta \llbracket \nabla \varphi_l\rrbracket_N \llbracket \overline{\nabla \varphi_j} \rrbracket_N \right) \\ 
     & - \{ \hspace{-0.1cm} \{ \nabla_h \varphi_l \} \hspace{-0.1cm} \} \cdot \llbracket \overline{\varphi_j} \rrbracket_N  - i\xi \left( \alpha \llbracket  \varphi_l\rrbracket_N \cdot \llbracket \overline{\varphi_j} \rrbracket_N \right) \big] \hspace{0.1cm}dS,
\end{flalign*}
where $\xi$ is defined as in (\ref{xi}).

Computing the averages and the normal jumps, we get the following expression for $A_{j,l}$:
\begin{align*}
    A_{j,l} & = \left[\frac{1}{2}(i\kappa_2\mathbf{d}_j\cdot\mathbf{n}+i\kappa_1\mathbf{d}_l\cdot\mathbf{n}) - \beta i \xi^{-1} \kappa_1 \kappa_2\mathbf{d}_l\cdot\mathbf{n}\hspace{0.05cm}\mathbf{d}_j\cdot\mathbf{n} - \alpha i\xi \right] \int_{F} e^{ i\kappa_1\mathbf{x}\cdot\mathbf{d}_l - i\kappa_2\mathbf{x}\cdot\mathbf{d}_j}.
\end{align*}
Note that if $\kappa_1 = \kappa_2$ we get the same expression as before. The matrix entry can be computed with a closed formula similar to the one seen in Section \ref{matr_form}.

The right hand side of the linear system does not change from the case with $\kappa$ constant, since on the boundary faces we can always assume this to be true when we define the mesh. The only difference is that the integral term and the multiplied constant will change in relationship to the region of $\Omega$.

\subsection{Numerical Experiments}
\begin{figure}[h]
    \centering
    \includegraphics[width=.85\textwidth]{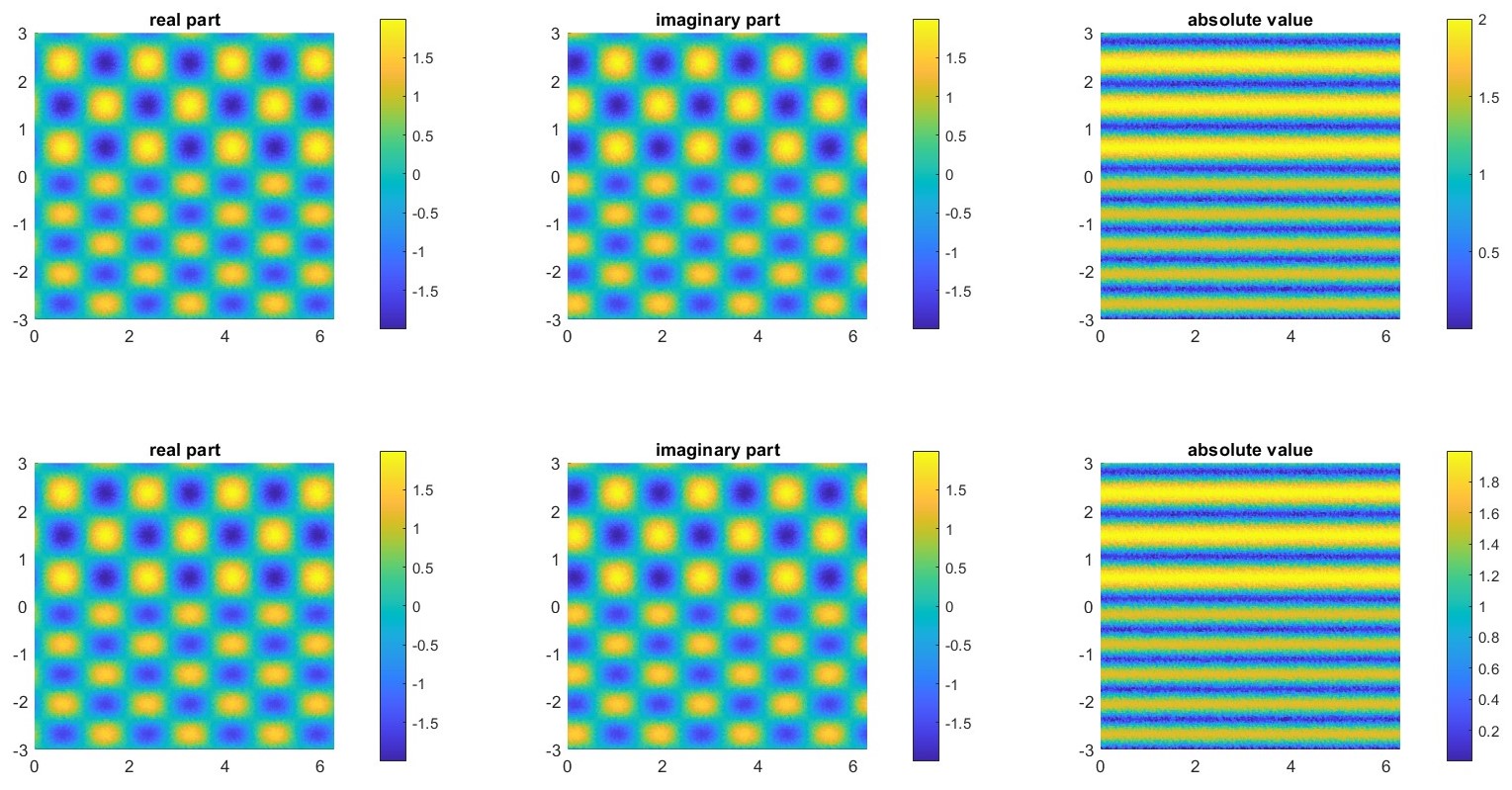}
    \caption{Plots of the numerical solution (top row) against the exact solution (bottom row)}
    \label{fig:PWDGimp}
\end{figure} 
\begin{figure}[h]
    \centering
    \includegraphics[width=.65\textwidth]{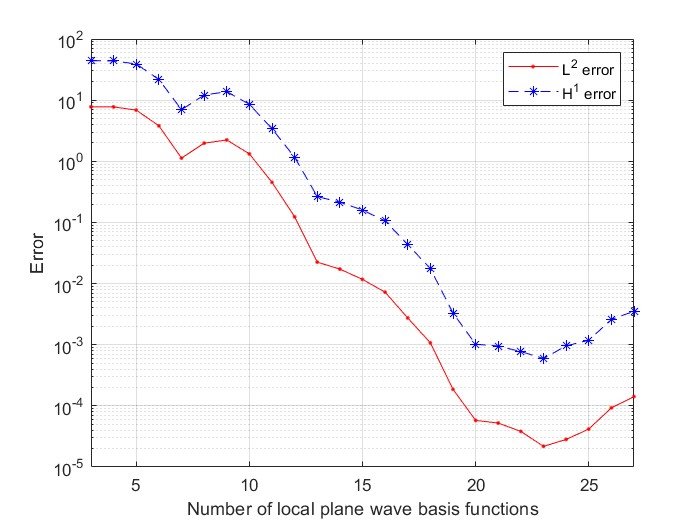}
    \caption{The errors in $L^2$ and $H^1$ norm for the different values of $p \in \{3,...,27\}$}
    \label{fig:PWDGimp_err}
\end{figure}
\begin{example} \label{Example_no_abs}
    To test our numerical method, we consider the simple case of $\Omega=[0,2\pi]\times [-H,H]$, divided in two different regions with different relative permittivity $\varepsilon_1, \varepsilon_2$ by the straight line $x_2=0$. We use (\ref{sol}) as exact solution to compute errors.
\end{example} \noindent
We first consider the case without absorption, with $\varepsilon_1=1$ and $\varepsilon_2=\frac{3}{2}$. The incident angle is $\theta = -\pi / 4$. In figures \ref{fig:PWDGimp} and \ref{fig:PWDGimp_err} we can see the results of the numerical experiments. We only vary the value of plane wave basis functions $p$ in every element $K$, while we keep the mesh size fixed. In this experiments we choose $h=1.5$, which gives us a triangulation of $56$ elements.

From the error plot we observe an exponential convergence in $p$, as we had in the case with $\varepsilon$ constant in $\Omega$.

\newpage
\begin{figure}[h] 
    \centering
    \includegraphics[width=.85\textwidth]{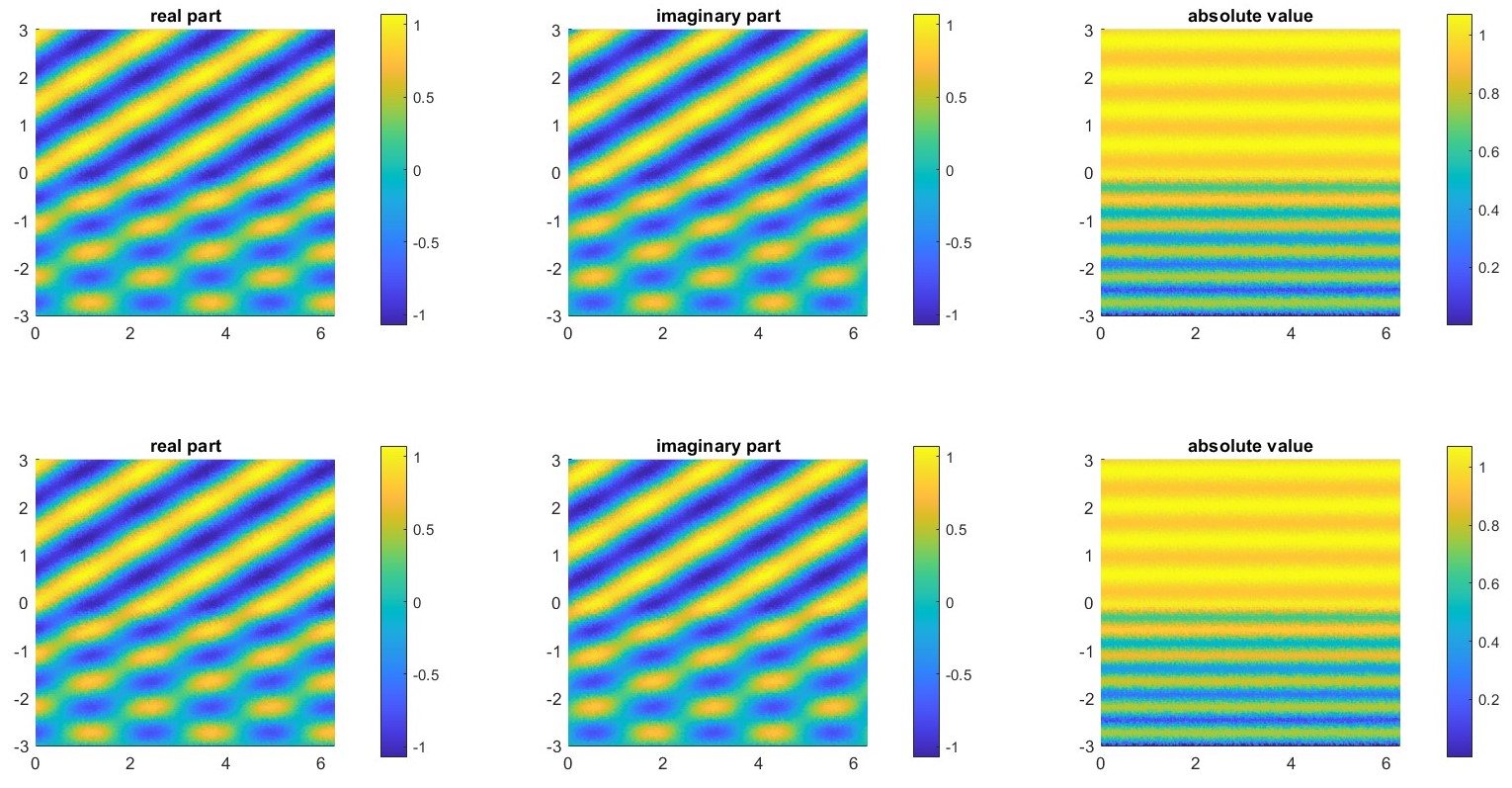}
    \caption{Plots of the numerical solution (top row) against the exact solution (bottom row)}
    \label{fig:PWDGimp_3}
\end{figure}
\begin{figure}[h] 
    \centering
    \includegraphics[width=.65\textwidth]{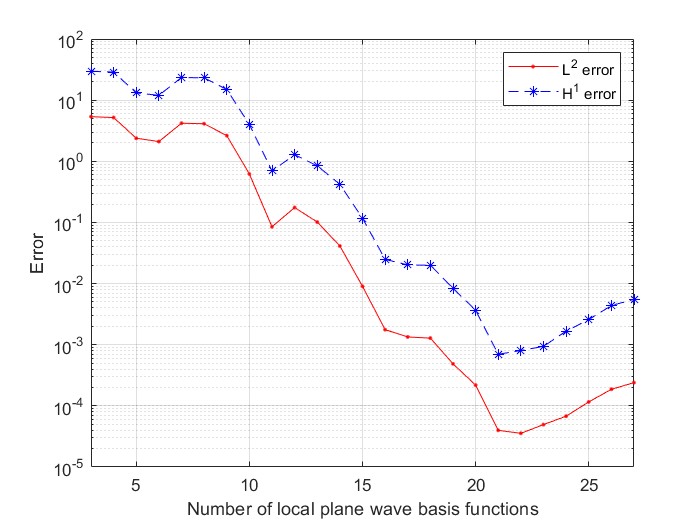}
    \caption{The errors in $L^2$ and $H^1$ norm for the different values of $p \in \{3,...,27\}$}
    \label{fig:PWDGimp_err_3}
\end{figure} 
\begin{example} \label{Esempio_cit}
    Another case we can consider is an incident plane wave with angle $\theta=- \pi / 3$ and a region with relative permittivities $\varepsilon_1 =1$ and $\varepsilon_2=(1.27+0.05i)^2$. \\
In this case we have absorption and the absolute value of the solution gradually decreases in the lower region, as we can see from figure \ref{fig:PWDGimp_3}. In figure $\ref{fig:PWDGimp_err_3}$ we can see the $L^2$ and $H^1$ error varying the number of plane waves $p$.
\end{example}  

\chapter{The DtN–PWDG Method} \label{ch4}

In this Chapter we present the DtN-PWDG method, which will be used to derive a numerical solution of the periodic grating problem. We refer to the formulation in \cite{CIVILETTI2020112478} and \cite{ZHENG2013550} for the model problem, as presented before in Chapter \ref{chap2}. We then derive the numerical method, following the idea in \cite{KAPITA2018208}. Last, we perform some numerical experiments to assert the accuracy of the DtN-PWDG scheme.

\section{Formulation of the numerical method} \label{sec_tdg}
Let $\Omega = \{x \in \mathbb{R}^2 : 0 < x_1 < 2\pi, -H < x_2 < H \},$ where $H > 0$ is an arbitrary positive constant, the domain considered before in Section \ref{sec2}. We denote with $k>0$ the wavenumber, which is assumed to be constant in $\Omega$, and with $\varepsilon \in \mathbb{C}$ the relative permittivity, which is supposed to be piecewise constant. We define $\kappa$ as $\kappa=k\sqrt{\varepsilon}$.

We want to find the numerical solution of the following boundary value problem: given an incident wave $u^i = e^{ik(x_1\cos \theta + x_2\sin \theta)} = e^{i\alpha_0 x_1 + i \beta_0 x_2}$, with $\theta \in \left(-\pi, 0 \right)$, find $u \in H^1_{\alpha_0}(\Omega)$ such that
\begin{equation} \label{nontrunc}
    \begin{cases}
        \Delta u + \kappa^2 u = 0 & \hspace{0.5cm} \text{in} \hspace{0.1cm} \Omega, \\
        u = 0 & \hspace{0.5cm} \text{on} \hspace{0.1cm} \Gamma_{D}, \\
        \frac{\partial(u-u^i)}{\partial \nu} - T(u-u^i) = 0 & \hspace{0.5cm} \text{on} \hspace{0.1cm} \Gamma_{H},
    \end{cases}
\end{equation}
where $\nu$ is the outer normal to $\Gamma_H$ and $\alpha_0 = k \cos \theta$ is the quasi-periodicity parameter. Here $T$ is the Dirichlet to Neumann operator defined in (\ref{DtN}).

In practical computations, we will solve the following truncated boundary value problem: find a quasi-periodic solution $u^M \in H^1_{\alpha_0}(\Omega)$ such that
\begin{equation} \label{truncated}
    \begin{cases}
        \Delta u^M + \kappa^2 u^M = 0 & \hspace{0.5cm} \text{in} \hspace{0.1cm} \Omega, \\
        u^M = 0 & \hspace{0.5cm} \text{on} \hspace{0.1cm} \Gamma_{D}, \\
        \frac{\partial(u^M-u^i)}{\partial \nu} - T_M(u^M-u^i) = 0 & \hspace{0.5cm} \text{on} \hspace{0.1cm} \Gamma_{H},
    \end{cases}
\end{equation}
where $T_M$ is the truncated DtN operator of order $M$ defined in (\ref{DtN_trunc}).
To better define the boundary conditions, we observe that, on $\Gamma_H$,
\begin{equation}
    \frac{\partial u}{\partial \nu} - T(u) = \frac{\partial(u^i)}{\partial \nu} - T(u^i) +\frac{\partial(u^s)}{\partial \nu} - T(u^s) = 2i\beta_0 u^i,
\end{equation}
since $\frac{\partial u^s}{\partial \nu} - T(u^s) = 0$. Here $\beta_0 = k \sin \theta$. We also have that
\begin{equation}
    \frac{\partial u}{\partial \nu} = T(u) + 2i\beta_0 u^i.
\end{equation}

We now present the formulation of the DtN-PWDG method, following the idea in \cite{KAPITA2018208}. Here, we refer to a boundary problem on an periodic scatterer with a DtN boundary condition on a segment, while in \cite{KAPITA2018208} the scatterer is bounded and the domain is a circle, so the DtN operator is different. Moreover, in our formulation we solve the problem for the total field and in \cite{KAPITA2018208} the problem is solved for the scattered one.

Let $\bigtau_h = \{ K \}$ a finite element partition of $\Omega$, suited to the conformation of the domain; we write $\mathbf{n}_K$ for the outward-pointing unit normal vector on $\partial K$, and $h$ for the mesh width of $\bigtau_h$. The parameter $h$ represents the diameter of the largest element in $\bigtau_h$, so that $h = \max_{K\in\bigtau_h}h_K$ where $h_K$ is the diameter of the smallest circumscribed circle containing $K$. We denote by $ F_h = \bigcup_{K \in \bigtau_h } \partial K $ and $F_h^I = F_h \setminus \partial \Omega $ the skeleton of the mesh and its inner part.

Given two elements of the partition $K_1, K_2$, a piecewise-smooth function $v$ and vector field $\tau$ on $\bigtau_h$, we introduce  on $\partial K_1 \cap \partial K_2$ the averages and the normal jumps as defined in (\ref{jumps}).\\
For $s>0$, we consider the usual broken Sobolev space $H^s(\bigtau_h)$ and the Trefftz space $T(\bigtau_h)$ as defined in (\ref{broken_sob}), with the only difference being the value of $\kappa= \kappa (\mathbf{x}) = k\sqrt{\varepsilon(\mathbf{x})}$ which is not constant.

As in Section \ref{dg}, we can derive the general Treffz DG formulation for problem (\ref{truncated}). Replacing $u_h$ and $\nabla u_h$ by the numerical fluxes $\hat{u}_h$ and $-\widehat{i\kappa\sigma}_h$, that are single valued approximations of $u_h$ and $\nabla u_h$ respectively on each edge, we get the following formulation.
\begin{equation} \label{TDG}
    \int_{\partial K} \hat{u}_h\overline{\nabla v_h \cdot \mathbf{n}} \hspace{0.1cm} dS + \int_{\partial K} \widehat{i\kappa\sigma}_h \cdot \mathbf{n} \overline{v_h}  \hspace{0.1cm} dS = 0
\end{equation}

We now define the PWDG fluxes. The definition of the fluxes on interior edges and edges on the scatterer are taken to be those of standard PWDG methods in \cite{Survey} and \cite{PWDG} that we presented in (\ref{flux_interno}) and (\ref{flux_dirichlet}). On the artificial boundary $\Gamma_H$ we propose new numerical fluxes, following the idea in \cite{KAPITA2018208}: in particular, we adapt the formulation in \cite{KAPITA2018208} from a DtN on a circle for a bounded scatterer to the formulation on $\Gamma_H$ for an infinite and periodic scatterer.

On interior edges in $F_h^I$ we choose
\begin{equation}
    \begin{cases}
        \hat{u}_h = \{ \hspace{-0.1cm} \{  u_h \} \hspace{-0.1cm} \}  + \beta \frac{1}{i\xi} \llbracket \nabla_h u_h  \rrbracket_N, & \\
        \widehat{i\kappa\sigma}_h = - \{ \hspace{-0.1cm} \{ \nabla_h u_h \} \hspace{-0.1cm} \}  - i \xi \alpha \hspace{0.05cm} \llbracket u_h  \rrbracket_N, & 
    \end{cases}
\end{equation}
where $\xi$ is defined as in (\ref{xi})
\begin{equation} \label{xi_2}
    \xi = \frac{1}{2} \biggl[ \Re (\kappa_1) + \Re (\kappa_2) \biggl].
\end{equation}

On the Dirichlet boundary we opt for
\begin{equation}
    \begin{cases}
        \hat{u}_h = 0, & \\
        \widehat{i\kappa \sigma}_h = - \nabla_h u_h - i \kappa \alpha \hspace{0.05cm} u_h \hspace{0.05cm} \mathbf{n}. &  
    \end{cases}
\end{equation}

On $\Gamma_H$ we propose
\begin{equation}
    \begin{cases}
        \hat{u}_h = u_h + \frac{\delta}{i\kappa} \left( \nabla_h  u_h \cdot \mathbf{n} - T_M u_h - 2i\beta_0 u^i \right), & \\
        \widehat{i \kappa \sigma}_h = - \hspace{0.05cm} T_M u_h \mathbf{n} - 2i\beta_0 u^i \mathbf{n} - \frac{\delta}{i \kappa} T^*_M \left( \nabla_h  u_h -T_M u_h \mathbf{n} - 2i\beta_0 u^i \mathbf{n} \right), &
    \end{cases}
\end{equation}
where $\alpha, \beta, \delta > 0$ are positive flux coefficients defined on the edges of the mesh, and $T^*_M$ is the $L^2(\Gamma_H)$-adjoint of $T_M$, defined as
\begin{equation}
    \int_{\Gamma_H} T^*_M v \hspace{0.05cm} \overline{w} \hspace{0.1cm} ds= \int_{\Gamma_H} v \hspace{0.05cm} \overline{T_Mw} \hspace{0.1cm} ds.
\end{equation}

We choose the fluxes in this way to get the consistency property of the method, as it is proven in Proposition \ref{prop:consistent}, and the coercivity of the sesquilinear form as we see in Proposition \ref{prop:norma}.

Now we add (\ref{TDG}) over all elements $K \in \bigtau_h$ and obtain:
\begin{equation*}
    \int_{F_h^I} \left( \hat{u}_h \llbracket \overline{\nabla_h v_h} \rrbracket_N +  \widehat{i\kappa\sigma}_h \cdot \llbracket \overline{v_h} \rrbracket_N \right) dS + \int_{F_h^B}  \left( \hat{u}_h\overline{\nabla_h v_h \cdot \mathbf{n}} + \widehat{i\kappa\sigma}_h \cdot \mathbf{n}\overline{v_h}\right) dS = 0.
\end{equation*}
Substituting the numerical fluxes, we get the following TDG scheme: Find $u_h \in V_h(\bigtau_h)$ such that for all $v_h \in V_h(\bigtau_h)$
\begin{equation}
    A^M_h(u_h, v_h) =L^M_h(v_h),
\end{equation}
where
\begin{align} \label{eqn:bilinear}
    & A^M_h(u, v) :=\\ \nonumber
    & \int_{F_h^I} \left( \{ \hspace{-0.1cm} \{ u \} \hspace{-0.1cm} \} \llbracket \overline{\nabla_h v} \rrbracket_N  - i\xi^{-1} \beta \hspace{0.05cm} \llbracket \nabla_h u\rrbracket_N  \hspace{0.05cm} \llbracket \overline{\nabla_h v} \rrbracket_N  -  \{ \hspace{-0.1cm} \{ \nabla_h u \} \hspace{-0.1cm} \} \cdot \llbracket \overline{v} \rrbracket_N  - i\xi \hspace{0.05cm}  \alpha \hspace{0.05cm} \llbracket  u\rrbracket_N \cdot \llbracket \overline{v} \rrbracket_N  \right) dS \\ \nonumber
    & + \int_{\Gamma_H} \biggl(  u \hspace{0.05cm} \overline{\nabla_h v \cdot \mathbf{n}}  - T_M u \hspace{0.05cm} \overline{v} -\delta \hspace{0.05cm} i\kappa^{-1} \left(\nabla_h u \cdot \mathbf{n} -T_M u\right) \hspace{0.05cm} \overline{\left(\nabla_h v \cdot \mathbf{n} -T_M v\right)} \biggl) dS \\ \nonumber
    & + \int_{\Gamma_D} \left( -\nabla_h u \cdot \mathbf{n} \hspace{0.05cm} \overline{v} - i\kappa \hspace{0.05cm}\alpha \hspace{0.05cm} u\hspace{0.05cm} \overline{v} \right) dS,
\end{align}
and
\begin{equation}
    L^M_h(v) :=  \int_{\Gamma_H} 2i\beta_0 \hspace{0.05cm} u^i \left( \overline{v} - \delta \hspace{0.05cm} i\kappa^{-1} \hspace{0.05cm} \overline{ (\nabla_h v \cdot \mathbf{n} -T_M v )} \hspace{0.05cm} \right) dS.
\end{equation} 

\subsection{Matrix formulation}
We already introduced the plane wave space on $K$
\begin{align*}
    V_p(K) & = \{ \hspace{0.1cm} v \in L^2(K) :  v(\mathbf{x}) = \sum_{j=1}^p \alpha_j \exp\{i\kappa \mathbf{d_j} \cdot \mathbf{x}\}, \hspace{0.3cm} \alpha_j \in \mathbb{C} \hspace{0.1cm} \},
\end{align*}
where $\{ \mathbf{d}_j \}_{j=1}^p$, with $|\mathbf{d_j}|=1$ are different directions and $\kappa=k\sqrt{\varepsilon}$ depends on the region where the element $K$ is located. Since we choose a mesh that is conforming to the domain profile, $\kappa$ is constant inside any element of the mesh.

We also introduce the global space
\begin{equation*}
    V_p(\bigtau_h)=\{ \hspace{0.1cm} v \in L^2(\Omega) : v_{|K} \in V_p(K), \hspace{0.2cm}\forall K \in \bigtau_h \hspace{0.1cm} \}.
\end{equation*}
The PWDG formulation is: find $u_h \in V_p(\bigtau_h)$ such that, for all $v_h \in V_p(\bigtau_h)$,
\begin{equation} \label{eqn:matrix}
    A_h^M(u_h, v_h) =L_h^M(v_h).
\end{equation}
Consider $p\in\mathbb{N}$ fixed; since $u_h,v_h \in V_p(\bigtau_h)$, on $K$ we can write 
\begin{align*}
   & u_h(\mathbf{x}) = \sum_{l=1}^p u_{l}^K \varphi_{l}^K(\mathbf{x}), \hspace{0.7cm} v_h(\mathbf{x}) = \sum_{j=1}^p v_{j}^K \varphi_{j}^K(\mathbf{x}),
\end{align*}
where $u_{j}^K,v_{j}^K \in \mathbb{C}$, $\hspace{0.2cm} \forall j=1,...,p$, $\hspace{0.2cm} \forall K\in \bigtau_h$ and 
\begin{equation*}
    \varphi_{j}^K(\mathbf{x})=\begin{cases}
    \exp\{i\kappa\mathbf{d_j} \cdot \mathbf{x}\}, & \mathbf{x} \in K \\
    0 & \mathbf{x} \not \in K
\end{cases}, \hspace{0.4cm} \forall j=1,...,p, \hspace{0.2cm} \forall K\in \bigtau_h.
\end{equation*}
If we write $u_p = \sum_{j=1}^N u_j\varphi_j$, where $N$ is the total number of degrees of freedom, the problem (\ref{eqn:matrix}) can be written as a linear system $A\mathbf{u}=\mathbf{L}$, where
\begin{align*}
    \mathbf{u}_l = u_l, & & A_{j,l} = A_h^M(\varphi_l, \varphi_j), & & \mathbf{L} = L_h^M(\varphi_l).
\end{align*}

As in Section \ref{matr_form}, we can distinguish different cases, the first being the case of $\varphi_l, \varphi_j$ on the same internal element $K$, which doesn't change from what we've seen before:
\begin{align*}
    A_{j,l} & = \sum_{F\in\mathcal{F}}  \left[ -\frac{1}{2}i\kappa(\mathbf{d}_j\cdot\mathbf{n}+\mathbf{d}_l\cdot\mathbf{n})-\beta i \xi^{-1} \kappa^2 \mathbf{d}_l\cdot\mathbf{n}\hspace{0.05cm}\mathbf{d}_j\cdot\mathbf{n} -\alpha i \xi \hspace{0.05cm} \right] \int_{F} e^{ i\kappa \mathbf{x}\cdot(\mathbf{d}_l-\mathbf{d}_j)} \hspace{0.05cm} dS,
\end{align*}
where $\mathcal{F}$ is the set of faces of $K$.

In this case, since we are referring to the same element, the value of $\varepsilon$ does not change from the two basis functions and neither does $\kappa$, so we can consider it constant. We only have to be careful when we identify the region where the element is located to choose the correct value of $\kappa$.

A second case is when $\varphi_l, \varphi_j$ are associated to the same element $K$, but some of its faces are part of Dirichlet boundary. In this case the matrix entry is:
\begin{align*}
    A_{j,l} &  = \sum_{F\in \mathcal{F}^I} \left[ -\frac{1}{2}i\kappa(\mathbf{d}_j\cdot\mathbf{n}+\mathbf{d}_l\cdot\mathbf{n})-\beta i \xi^{-1} \kappa^2 \mathbf{d}_l\cdot\mathbf{n}\hspace{0.05cm}\mathbf{d}_j\cdot\mathbf{n} -\alpha i \xi \hspace{0.05cm} \right] \int_{F} e^{ i\kappa\mathbf{x}\cdot(\mathbf{d}_l-\mathbf{d}_j)} \hspace{0.05cm} dS\\
    & + \sum_{F\in \mathcal{F}^D}  i\kappa\hspace{0.1cm}\left[ \hspace{0.05cm}- \alpha - \mathbf{d}_l \cdot \mathbf{n} \hspace{0.1cm}\right] \int_{F} e^{ i\kappa\mathbf{x}\cdot(\mathbf{d}_l-\mathbf{d}_j)} \hspace{0.05cm} dS,
\end{align*}
where $\mathcal{F}^I = \mathcal{F} \cap F_h^I$ and $\mathcal{F}^D = \mathcal{F} \cap \Gamma_D$.

Third case is when $\varphi_l, \varphi_j$ are associated to two adjacent elements $K, K'$. We have to integrate only over the face $F$ common to the two elements. When $\varphi_l, \varphi_j$ are associated to two adjacent elements $K, K'$ and they have respectively $\kappa_1=k\sqrt{\varepsilon_1}$ and $\kappa_2=k\sqrt{\varepsilon_2}$ as wave numbers, the matrix entry becomes
\begin{align*}
    A_{j,l} & = \left[\frac{1}{2}(i\kappa_2\mathbf{d}_j\cdot\mathbf{n}+ik_1\mathbf{d}_l\cdot\mathbf{n}) - \beta i \xi^{-1} \kappa_1 \kappa_2\mathbf{d}_l\cdot\mathbf{n}\hspace{0.05cm}\mathbf{d}_j\cdot\mathbf{n} -\alpha i\xi  \hspace{0.05cm}\right] \int_{F} e^{ i\kappa_1\mathbf{x}\cdot\mathbf{d}_l - i\kappa_2\mathbf{x}\cdot\mathbf{d}_j},
\end{align*}
with $\xi$ as in (\ref{xi_2}). In general we can also have $\kappa_1 = \kappa_2$. All the matrix entries can be computed with a closed formula similar to the one seen before.

\begin{figure}[]
    \centering
    \includegraphics[width=.75\textwidth]{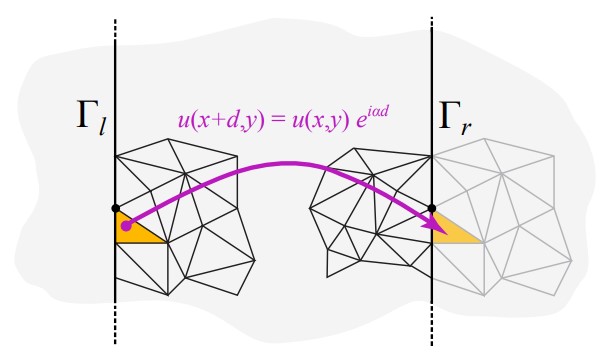}
    \caption{(Figure 5.2, \cite{ch5}) Quasi-periodicity of $u$ and periodicity of the mesh.}
    \label{fig:periodicity}
\end{figure}
We also have to include the quasi-periodicity in our linear system. The mesh is on the domain $\Omega = [0, 2\pi]\times[-H,H]$, which is obtained as the vertical strip of an unbounded domain which is $2\pi$-periodic. Since we have to include an interaction term between the left and the right boundary, we have to ensure that the mesh is compatible. To do so, we require that the elements lying on the left and the right boundary share the $x_2$-coordinates of the faces endpoints. The idea is that if we repeat the same mesh translated by $2\pi$ we get a conforming mesh. Moreover, since we have to imagine the domain as a cylinder, we will include all the faces $F$ on the left and right boundary of $\Omega$ among the internal faces, so we are going to sum the corresponding term to the matrix entries.\\
Moreover, we have to include the interaction terms between basis function associated to elements respectively on the right and left side of the domain, which are adjacent in our cylinder. In this interaction we have to consider the quasi-periodicity of the basis functions, in the sense that we have to multiply the function associated to the left side by the quasi-periodicity constant $e^{i\alpha_0 2\pi}$. In Figure \ref{fig:periodicity} we can see an example of the periodic mesh and how basis functions interact with each other.

For the faces located on $\Gamma_H$, we have two different types of contributions, the first being the ones which do not include terms with the DtN operator $T_M$, which are local contributions, while the terms including the DtN operator are global, since the function $T_M \varphi_l$ is defined on all $\Gamma_H$.

For an element with a face lying on $\Gamma_H$, we have to add to the matrix entry also the contribute derived by the integrals on $\Gamma_H$, which is the following.
\begin{align} \nonumber
& \int_{\Gamma_H} \biggl(  u \hspace{0.05cm} \overline{\nabla_h v \cdot \mathbf{n}}  - T_M u \hspace{0.05cm} \overline{v} - \delta \hspace{0.05cm} i\kappa^{-1} \hspace{0.05cm} \left(\nabla_h u \cdot \mathbf{n} -T_M u\right) \hspace{0.05cm} \overline{\left(\nabla_h v \cdot \mathbf{n} -T_M v\right)} \biggl) \hspace{0.05cm} dS \\ \label{local}
& = \int_{\Gamma_H}  \left( u -\delta \hspace{0.05cm} i\kappa^{-1} \hspace{0.05cm}\nabla_h u \cdot \mathbf{n} \right) \hspace{0.05cm} \overline{\nabla_h v \cdot \mathbf{n}} \hspace{0.1cm} dS \\ \label{global}
& - \int_{\Gamma_H} \biggl( T_M u \hspace{0.05cm} \left( \overline{v} - \delta \hspace{0.05cm} i\kappa^{-1} \hspace{0.05cm}\overline{\nabla_h v \cdot \mathbf{n}} \right) + \delta \hspace{0.05cm} i\kappa^{-1} \hspace{0.05cm} \left( T_Mu \hspace{0.05cm} \overline{T_Mv} - \nabla_h u \cdot \mathbf{n} \hspace{0.05cm}\overline{T_M v} \right) \biggl) \hspace{0.05cm} dS.
\end{align}

We can observe that the term (\ref{local}) is a local term, since it is non-zero only for basis functions associated to the same element, while the term (\ref{global}) is a global term, because it is non-zero for every couple of basis functions associated to any element with a face on $\Gamma_H$. 

The local term (\ref{local}) is non-zero only if the two basis functions $\varphi_l$ and $\varphi_j$ are associated to the same upper boundary element $K$. In this case the local term of the matrix entry is
\begin{align*}
    & \int_{F}  \left( \varphi_l - \delta \hspace{0.05cm} i\kappa^{-1} \hspace{0.05cm} \nabla_h \varphi_l \cdot \mathbf{n} \right) \hspace{0.05cm} \overline{\nabla_h \varphi_j \cdot \mathbf{n}} \hspace{0.1cm} dS\\ 
    & = \int_{F}  \left( e^{i\kappa\mathbf{x}\cdot\mathbf{d}_l}(1 + \delta \hspace{0.05cm} \mathbf{d}_l \cdot \mathbf{n}) \right) \hspace{0.05cm} \left( -i\kappa \hspace{0.05cm} \mathbf{d}_j \cdot \mathbf{n} \hspace{0.05cm} e^{-i\kappa\mathbf{x}\cdot\mathbf{d}_j} \right) \hspace{0.05cm} dS \\
    & = -i\kappa \hspace{0.05cm} \mathbf{d}_j \cdot \mathbf{n} \hspace{0.05cm} (1 + \delta \hspace{0.05cm} \mathbf{d}_l \cdot \mathbf{n}) \hspace{0.05cm}  \int_{F} e^{i\kappa\mathbf{x}\cdot(\mathbf{d}_l-\mathbf{d}_j)} \hspace{0.05cm} dS,
\end{align*}
where $F$ is the face of $K$ lying on $\Gamma_H$ and $\mathbf{n}$ is the outer normal of $K$.

The global term (\ref{global}) is non-zero for every couple of basis functions associated to any element $K$ with a face on the boundary $\Gamma_H$. This is because the function $T_M \varphi_l$ is defined on all $\Gamma_H$, while $\varphi_l$ is zero outside of $K$.

Given two basis function $\varphi_l$, $\varphi_j$ associated to any upper elements $K_1$, $K_2$, the global contribution is the sum of three terms (\ref{primo})-(\ref{terzo})
\begin{align} \nonumber
    & - \int_{\Gamma_H} T_M \varphi_l \hspace{0.05cm} \left( \overline{\varphi_j}-\delta \hspace{0.05cm} i\kappa^{-1} \hspace{0.05cm}\overline{\nabla_h \varphi_j \cdot \mathbf{n}} \right) dS  - \int_{\Gamma_H} \delta \hspace{0.05cm} i\kappa^{-1} \hspace{0.05cm} T_M\varphi_l \hspace{0.05cm} \overline{T_M\varphi_j} \hspace{0.05cm} dS  \\ \nonumber
    & + \int_{\Gamma_H} \delta \hspace{0.05cm} i\kappa^{-1} \hspace{0.05cm} \nabla_h \varphi_l \cdot \mathbf{n} \hspace{0.05cm} \overline{T_M \varphi_j} \hspace{0.05cm} dS\\ \label{primo}
    & =  -( 1 - \delta \hspace{0.05cm} \mathbf{d}_j \cdot \mathbf{n}) \int_{\Gamma_H} T_M \varphi_l \hspace{0.1cm}  \overline{\varphi_j} \hspace{0.05cm} dS \\ \label{secondo}
    & - \delta \hspace{0.05cm} i\kappa^{-1} \hspace{0.05cm} \int_{\Gamma_H} T_M\varphi_l \hspace{0.05cm} \overline{T_M\varphi_j} \hspace{0.05cm} dS \\ \label{terzo}
    & - \delta \hspace{0.05cm} \mathbf{d}_l \cdot \mathbf{n} \hspace{0.1cm} \int_{\Gamma_H} \varphi_l \hspace{0.05cm} \overline{T_M \varphi_j} \hspace{0.05cm} dS,
\end{align}
since $\nabla_h \varphi_l = i\kappa \mathbf{d}_l \hspace{0.05cm} \varphi_l \hspace{0.05cm}$.

In order to compute the three terms, we recall the definition of the truncated DtN operator (\ref{DtN_trunc})
\begin{align*} 
    & T_M:H^{1/2}_{\alpha_0}(\Gamma_H) \to H^{-1/2}_{\alpha_0}(\Gamma_H), \\
    & (T_M\phi)(x_1) = i \sum_{n=-M}^M \phi_n \beta_n e^{i\alpha_nx_1}, \hspace{0.5cm} \text{for} \hspace{0.2cm} \phi \in H^{1/2}(\Gamma_H),
\end{align*}
where, for $n=-M,...,M$, $\alpha_n = \alpha_0+n$,
\begin{equation*}
    \beta_n = \begin{cases} \sqrt{k^2\varepsilon^+-\alpha_n^2} & \alpha_n^2 < k^2\varepsilon^+, \\ i\sqrt{\alpha_n^2-k^2\varepsilon^+} & \alpha_n^2 > k^2\varepsilon^+ ,\end{cases} 
\end{equation*}
and 
\begin{equation*}
    \phi_n =\phi_n(H) = \frac{1}{2\pi} \int_0^{2 \pi} e^{-i\alpha_n x_1} \phi(x_1,H) \hspace{0.05cm} dx_1.
\end{equation*}
If we choose $\phi = \varphi_l = e^{i\kappa \mathbf{x}\cdot\mathbf{d}_l}$ on $K_1$, we have
\begin{align*}
    \varphi_l^n & = \frac{1}{2\pi} \int_0^{2 \pi} e^{-i\alpha_n x_1} \varphi_l(x_1,H) \hspace{0.05cm} dx_1 \\
    & = \frac{1}{2\pi} e^{i\kappa \hspace{0.05cm} (\mathbf{d}_l)_{_2} \hspace{0.05cm} H} \int_{p_1}^{p_2} e^{(i\kappa \hspace{0.05cm} (\mathbf{d}_l)_{_1} -i\alpha_n) \hspace{0.05cm} x_1} \hspace{0.05cm} dx_1,
\end{align*}
where $p_1$, $p_2$ are the $x_1$-coordinates of the endpoints of the face of $K_1$ which lies on $\Gamma_H$.

Once we have computed the Fourier coefficients $\varphi_l^n$, we can compute the integral in (\ref{primo})
\begin{align*}
    \int_{\Gamma_H} T_M \varphi_l \hspace{0.1cm}  \overline{\varphi_j} \hspace{0.05cm} dS & = \int_0^{2\pi} i \sum_{n=-M}^M \varphi_l^n \beta_n e^{i\alpha_nx_1} \hspace{0.1cm}  \overline{\varphi_j(x_1,H)} \hspace{0.05cm} dx_1 \\
    & = i \sum_{n=-M}^M \varphi_l^n \beta_n e^{-i\kappa \hspace{0.05cm} (\mathbf{d}_j)_{_2}\hspace{0.03cm} H} \int_{q_1}^{q_2} e^{ (i\alpha_n-i\kappa \hspace{0.05cm}(\mathbf{d}_j)_{_1}) \hspace{0.05cm} x_1} \hspace{0.05cm} dx_1,
\end{align*}
where $q_1$, $q_2$ are the $x_1$-coordinates of the endpoints of the face of $K_2$ which lies on $\Gamma_H$.

To compute the integral $\int_{\Gamma_H} \varphi_l \hspace{0.1cm}  \overline{T_M \varphi_j} \hspace{0.05cm} dS$ in (\ref{terzo}) we make use of the same definition of $T_M$
\begin{align*}
    \int_{\Gamma_H} \varphi_l \hspace{0.1cm}  \overline{T_M \varphi_j} \hspace{0.05cm} dS & = \int_0^{2\pi} \varphi_l(x_1,H) \hspace{0.1cm}  \overline{\left( i \sum_{n=-M}^M \varphi_j^n \beta_n e^{i\alpha_nx_1} \right) } \hspace{0.05cm} dx_1 \\
    & = -i \sum_{n=-M}^M \overline{\varphi_j^n \beta_n} e^{i\kappa \hspace{0.05cm} (\mathbf{d}_l)_{_2}\hspace{0.03cm} H} \int_{p_1}^{p_2} e^{ (i\kappa \hspace{0.05cm}(\mathbf{d}_l)_{_1}-i\alpha_n) \hspace{0.05cm} x_1} \hspace{0.05cm} dx_1,
\end{align*}
with $p_1$, $p_2$ defined as before but for $K_2$ instead of $K_1$.

Finally, we compute the $\int_{\Gamma_H} T_M\varphi_l \hspace{0.05cm} \overline{T_M\varphi_j} \hspace{0.05cm} dS$ in (\ref{secondo}).
\begin{align*}
    \int_{\Gamma_H} T_M\varphi_l \hspace{0.05cm} \overline{T_M\varphi_j} \hspace{0.05cm} dS & = \int_0^{2\pi} \left( i \sum_{n=-M}^M \varphi_l^n \beta_n e^{i\alpha_nx_1} \right) \hspace{0.1cm} \overline{\left( i \sum_{m=-M}^M \varphi_j^m \beta_m e^{i\alpha_m x_1} \right) } \hspace{0.05cm} dx_1 \\
    & = \sum_{n=-M}^M \sum_{m=-M}^M \varphi_l^n \beta_n \hspace{0.1cm} \overline{\varphi_j^m \beta_m} \int_0^{2\pi} e^{i(\alpha_n -\alpha_m) \hspace{0.03cm} x_1}  \hspace{0.05cm} dx_1 \\
    & = \sum_{n=-M}^M 2\pi \hspace{0.1cm} \varphi_l^n \beta_n \hspace{0.1cm} \overline{\varphi_j^n \beta_n} \\
    & = \sum_{n=-M}^M 2\pi \hspace{0.1cm} \varphi_l^n\hspace{0.1cm} \overline{\varphi_j^n} \hspace{0.1cm} | \beta_n |^2,
\end{align*}
since $\int_0^{2\pi} e^{i(\alpha_n -\alpha_m) \hspace{0.03cm} x_1}  \hspace{0.05cm} dx_1 = 2\pi \hspace{0.05cm} \delta_{n,m}$.\\

Now we focus on the right hand side of the linear system: we note that this term is nonzero only if the basis function is associated to an element $K$ with a face $F$ lying on $\Gamma_H$. We have
\begin{align} \nonumber
    \mathbf{L}_j & = L^M_h(\varphi_j) =  \int_{\Gamma_H} 2i\beta_0 \hspace{0.05cm} u^i \left( \overline{\varphi_j} -\delta \hspace{0.05cm} i\kappa^{-1} \hspace{0.05cm} \overline{\nabla_h \varphi_j \cdot \mathbf{n}} + \delta \hspace{0.05cm} i\kappa^{-1} \hspace{0.05cm} \overline{T_M \varphi_j} \right) \hspace{0.05cm} dS \\ \label{uno}
    & = 2i\beta_0 \hspace{0.05cm} (1 - \delta \hspace{0.05cm} \mathbf{d}_j \cdot \mathbf{n} ) \int_F u^i \hspace{0.05cm} \overline{\varphi_j} \hspace{0.1cm} dS \\ \label{due}
    & - 2 \hspace{0.05cm} \delta \hspace{0.05cm}\beta_0 \hspace{0.05cm} \kappa^{-1} \int_{\Gamma_H} u^i \hspace{0.05cm} \overline{T_M \varphi_j} \hspace{0.05cm} dS.
\end{align} 
Here we used that $\nabla_h \varphi_j = i\kappa \mathbf{d}_j \hspace{0.05cm} \varphi_j \hspace{0.05cm}$ and that the function $\varphi_j$ is zero outside of $K$, while $T_M \varphi_j$ is defined on all $\Gamma_H$.

The integral in (\ref{uno}) is easy to compute, as we can write $u^i(\mathbf{x})=e^{i\kappa^+ \mathbf{x} \cdot \mathbf{d}_i }$, with $\mathbf{d}_i = (\cos \theta, \sin \theta)^T$ and $\kappa^+=k\sqrt{\varepsilon^+}$. So we have
\begin{equation*}
    \int_F u^i \hspace{0.05cm} \overline{\varphi_j} \hspace{0.1cm} dS = \int_{F} e^{i\kappa^+\mathbf{x}\cdot\mathbf{d}_i-i\kappa \mathbf{x}\cdot\mathbf{d}_j} \hspace{0.05cm} dS.
\end{equation*}
Note that in general $\kappa \neq \kappa^+$, since the relative permittivity inside of $\Omega$ could be different from $\varepsilon^+$.

For the integral in (\ref{due}) we can write $u^{i} (x_1,x_2)=e^{i\alpha_0 x_1 + i \beta_0 x_2}$, so we have $u^i(x_1,H) = e^{i \beta_0 \hspace{0.03cm} H}e^{i\alpha_0 x_1}$. The integral $\int_{\Gamma_H} u^i \hspace{0.05cm} \overline{T_M \varphi_j} \hspace{0.05cm} dS$ becomes
\begin{align*}
    \int_{\Gamma_H} u^i \hspace{0.05cm} \overline{T_M \varphi_j} \hspace{0.05cm} dS & =  \int_{\Gamma_H} u^i(x_1,H) \hspace{0.05cm} \overline{\left( i \sum_{n=-M}^M \varphi_j^n \beta_n e^{i\alpha_n x_1} \right)} \hspace{0.05cm} dx_1 \\
    & = -i \sum_{n=-M}^M \overline{\varphi_j^n \beta_n} e^{i \beta_0 \hspace{0.03cm} H} \int_{\Gamma_H} e^{i(\alpha_0-\alpha_n) x_1} \hspace{0.05cm} dx_1 \\
    & = -i \hspace{0.05cm} \overline{\varphi_j^0 \beta_0} e^{i \beta_0 \hspace{0.03cm} H} \cdot 2\pi.
\end{align*}
Here we used that $\int_0^{2\pi} e^{i(\alpha_0 -\alpha_n) \hspace{0.03cm} x_1}  \hspace{0.05cm} dx_1 = 2\pi \hspace{0.05cm} \delta_{0,n}$.

Inserting these results into the expression of $\mathbf{L}_j$ we get
\begin{align*}
    \mathbf{L}_j & = 2i\beta_0 \hspace{0.05cm} (1-\delta \hspace{0.05cm} \mathbf{d}_j \cdot \mathbf{n} ) \int_{F} e^{ik^+\mathbf{x}\cdot\mathbf{d}_i-i\kappa \mathbf{x}\cdot\mathbf{d}_j} \hspace{0.05cm} dS + 4\pi \hspace{0.05cm} i \hspace{0.05cm} \delta \hspace{0.05cm}\beta_0 \hspace{0.05cm} \kappa^{-1} \hspace{0.05cm} \overline{\varphi_j^0 \beta_0} \hspace{0.1cm} e^{i \beta_0 \hspace{0.03cm} H} \\
    & = 2i\beta_0 \hspace{0.05cm} (1-\delta \hspace{0.05cm} \mathbf{d}_j \cdot \mathbf{n} ) \int_{F} e^{ik^+\mathbf{x}\cdot\mathbf{d}_i-i\kappa \mathbf{x}\cdot\mathbf{d}_j} \hspace{0.05cm} dS - 4\pi \hspace{0.05cm} i \hspace{0.05cm} \delta \hspace{0.05cm} |\beta_0|^2 \hspace{0.05cm} \kappa^{-1} \hspace{0.05cm} \overline{\varphi_j^0} \hspace{0.1cm} e^{i \beta_0 \hspace{0.03cm} H}.
\end{align*} 

\section{Error analysis}
We develop our analysis under the domain assumptions described in \cite{CIVILETTI2020112478} and the non trapping conditions (\ref{notrap}) for $\varepsilon$ as we did in Section \ref{regularity}. Under this assumptions, we can guarantee the existence and the uniqueness of the solution to the scattering problem (\ref{truncated}) and that this solution coincides with the one of (\ref{nontrunc}).

We define the bilinear form $A_h$, for which the exact DtN operator is used on $\Gamma_H$. This can be seen as a generalization of $A_h^M$ for the case $M=\infty$. 
\begin{align} \label{DtN_bilinear_exact}
    & A_h(u, v) :=\\ \nonumber
    & \int_{F_h^I} \left( \{ \hspace{-0.1cm} \{ u \} \hspace{-0.1cm} \} \llbracket \overline{\nabla_h v} \rrbracket_N  - i\xi^{-1} \beta \hspace{0.05cm} \llbracket \nabla_h u\rrbracket_N  \hspace{0.05cm} \llbracket \overline{\nabla_h v} \rrbracket_N  -  \{ \hspace{-0.1cm} \{ \nabla_h u \} \hspace{-0.1cm} \} \cdot \llbracket \overline{v} \rrbracket_N  - i\xi \hspace{0.05cm}  \alpha \hspace{0.05cm} \llbracket  u\rrbracket_N \cdot \llbracket \overline{v} \rrbracket_N  \right) dS \\ \nonumber
    & + \int_{\Gamma_H} \biggl(  u \hspace{0.05cm} \overline{\nabla_h v \cdot \mathbf{n}}  - T_M u \hspace{0.05cm} \overline{v} -\delta \hspace{0.05cm} i\kappa^{-1} \left(\nabla_h u \cdot \mathbf{n} -T u\right) \hspace{0.05cm} \overline{\left(\nabla_h v \cdot \mathbf{n} -T v\right)} \biggl) dS \\ \nonumber
    & + \int_{\Gamma_D} \left( -\nabla_h u \cdot \mathbf{n} \hspace{0.05cm} \overline{v} - i\kappa \hspace{0.05cm}\alpha \hspace{0.05cm} u\hspace{0.05cm} \overline{v} \right) dS,
\end{align}
while we define $L_h$ as
\begin{equation}
    L_h(v) :=  \int_{\Gamma_H} 2i\beta_0 \hspace{0.05cm} u^i \left( \overline{v} - \delta \hspace{0.05cm} i\kappa^{-1} \hspace{0.05cm} \overline{ (\nabla_h v \cdot \mathbf{n} -T v )} \hspace{0.05cm} \right) dS.
\end{equation} 
We refer as the non-truncated DtN-PWDG formulation as: Find $u_h \in V_h(\bigtau_h)$ such that for all $v_h \in V_h(\bigtau_h)$
\begin{equation}
    A_h(u_h, v_h) =L_h(v_h).
\end{equation} 
\begin{prop} \label{prop:consistent}
    The non-truncated DtN–PWDG method is consistent.
\end{prop}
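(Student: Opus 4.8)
The goal is to show that the exact solution $u$ of the non-truncated boundary value problem (\ref{nontrunc}) satisfies $A_h(u,v_h)=L_h(v_h)$ for every $v_h\in V_h(\bigtau_h)$, with $A_h$ the exact-DtN form (\ref{DtN_bilinear_exact}). The plan is to mirror the consistency argument already given for the impedance PWDG method, the only genuinely new ingredient being the treatment of the fluxes on $\Gamma_H$. First I would collect the elementary consequences of $u$ being the exact solution: assuming sufficient (piecewise $H^2$) regularity, $u$ and $\nabla u$ are single-valued across interior faces, so $\llbracket u\rrbracket_N=0$, $\llbracket\nabla_h u\rrbracket_N=0$ and $\dlgraffa u\drgraffa=u$, $\dlgraffa\nabla_h u\drgraffa=\nabla_h u$ on $F_h^I$; moreover $u=0$ on $\Gamma_D$; and on $\Gamma_H$ the boundary condition of (\ref{nontrunc}) together with $\frac{\partial u^i}{\partial\nu}-Tu^i=2i\beta_0 u^i$ yields the scalar identity $\nabla_h u\cdot\mathbf{n}-Tu-2i\beta_0 u^i=0$.

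Next I would verify that, upon inserting $u$, the numerical fluxes reproduce the exact traces in the sense required by (\ref{TDG}). On interior faces the stabilisation terms in (\ref{flux_interno}) carry the jumps and therefore vanish, giving $\hat{u}_h=u$ and $\widehat{i\kappa\sigma}_h=-\nabla_h u$; on $\Gamma_D$ the flux (\ref{flux_dirichlet}) reduces to $\hat{u}_h=0=u$ and $\widehat{i\kappa\sigma}_h=-\nabla_h u$ because $u=0$ kills the $\alpha$-term. These two cases are identical to the earlier PWDG consistency proof. The delicate case is $\Gamma_H$: the residual $\nabla_h u\cdot\mathbf{n}-Tu-2i\beta_0 u^i$ vanishes, so immediately $\hat{u}_h=u$. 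For $\widehat{i\kappa\sigma}_h$ one cannot claim equality with $-\nabla_h u$ as a vector, since the flux contains the correction $-\frac{\delta}{i\kappa}T^*(\nabla_h u-Tu\,\mathbf{n}-2i\beta_0 u^i\mathbf{n})$ whose argument is the generally nonzero tangential part $\nabla_h u-(\nabla_h u\cdot\mathbf{n})\mathbf{n}$. However, only $\widehat{i\kappa\sigma}_h\cdot\mathbf{n}$ enters (\ref{TDG}); since on $\Gamma_H$ the normal is the constant vector $\mathbf{e}_2$, the adjoint $T^*$ commutes with taking the normal component, and the tangential argument contributes $T^*(0)=0$. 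Combined with $-Tu-2i\beta_0 u^i=-\nabla_h u\cdot\mathbf{n}$, this gives exactly $\widehat{i\kappa\sigma}_h\cdot\mathbf{n}=-\nabla_h u\cdot\mathbf{n}$. I expect this cancellation to be the main obstacle, as it is the one step that genuinely uses both the boundary condition and the structure of the new $\Gamma_H$ flux.

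Finally I would close the argument by reversing the derivation of Section \ref{dg}: with the fluxes equal to the exact traces in the normal-component sense just established, summing (\ref{TDG}) over all $K\in\bigtau_h$ and using that each $v_h$ is a Trefftz function to discard the volume contributions returns precisely $A_h(u,v_h)=L_h(v_h)$. Equivalently, one may substitute $u$ directly into (\ref{DtN_bilinear_exact}): the $\delta$-weighted $\Gamma_H$ term reduces, via the boundary-condition identity, to the $\delta$-weighted term of $L_h$ and cancels it, leaving $\int_{\Gamma_H}(u\,\overline{\nabla_h v_h\cdot\mathbf{n}}-(\nabla_h u\cdot\mathbf{n})\overline{v_h})$, which together with the interior-face and $\Gamma_D$ integrals reassembles $\sum_K\int_{\partial K}(u\,\overline{\nabla v_h\cdot\mathbf{n}_K}-(\nabla u\cdot\mathbf{n}_K)\overline{v_h})$; elementwise Green's identity and the Helmholtz equation satisfied by $u$ and $v_h$ then make this sum vanish, completing the proof.
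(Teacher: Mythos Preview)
Your proof is correct and follows the same flux-verification route as the paper's proof. Your treatment of the $\Gamma_H$ flux is in fact more careful than the paper's: the paper simply asserts $\widehat{i\kappa\sigma}_h=-\nabla_h u$ on all faces, whereas you rightly point out that on $\Gamma_H$ this holds only after passing to the normal component (because of the tangential remainder fed into $T^*$), which is precisely what the elementwise identity (\ref{TDG}) uses.
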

\begin{proof}
    If $u$ solves the boundary value problem (\ref{nontrunc}), assumed $u \in H^2_{\alpha_0}(\Omega)$, we have that and on any interior edge  $\llbracket u \rrbracket_N = 0 $ and $\llbracket \nabla_h u \rrbracket_N = 0 $. Moreover, $\nabla_h u\cdot \mathbf{n} - T u = 2i\beta_0 u^i$ on $\Gamma_H$ and $u = 0$ on $\Gamma_D$. \\
    From the definition of the numerical fluxes $\hat{u}_h$ and $\widehat{i\kappa\sigma}_h$, we have that on all the faces, when we substitute $u$ in the expression of the fluxes, we have that $\hat{u}_h=u$ and $\widehat{i\kappa\sigma}_h=-\nabla_h u$. This gives us that, for every $v_h \in V_h(\bigtau_h)$,
    \begin{align*}
        A_h(u, v_h) = L_h(v_h),
    \end{align*}
    which is the consistency of the method.
\end{proof}

\begin{lem} \label{lemma:l2prod}
    We have that for every $w \in H^1_{\alpha_0} (\Omega)$, $w\neq 0$
    \begin{equation}
        \Im \int_{\Gamma_H} Tw \hspace{0.01cm} \overline{w} \hspace{0.1cm} dS \geq 0.
    \end{equation}
\end{lem}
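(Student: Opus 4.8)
The plan is to diagonalize the DtN operator in the quasi-periodic Fourier basis on $\Gamma_H$ and reduce the claim to the sign of $\Re(\beta_n)$. First I would invoke the trace theorem to interpret $w|_{\Gamma_H}$ as an element of $H^{1/2}_{\alpha_0}(\Gamma_H)$ and write its quasi-periodic Fourier expansion
\[
    w(x_1, H) = \sum_{n\in\mathbb{Z}} w_n e^{i\alpha_n x_1}, \qquad w_n = \frac{1}{2\pi}\int_0^{2\pi} e^{-i\alpha_n x_1} w(x_1, H)\, dx_1 .
\]
By the definition (\ref{DtN}) of $T$, we then have $Tw = i\sum_{n\in\mathbb{Z}} \beta_n w_n e^{i\alpha_n x_1}$, understood as an element of $H^{-1/2}_{\alpha_0}(\Gamma_H)$.

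Next I would compute the pairing $\int_{\Gamma_H} Tw\,\overline{w}\,dS$ by inserting both expansions and using the orthogonality relation $\int_0^{2\pi} e^{i(\alpha_n - \alpha_m)x_1}\,dx_1 = 2\pi\,\delta_{n,m}$, which holds since $\alpha_n - \alpha_m = n - m \in \mathbb{Z}$. The double sum collapses to
\[
    \int_{\Gamma_H} Tw\,\overline{w}\,dS = 2\pi\, i \sum_{n\in\mathbb{Z}} \beta_n\, |w_n|^2 .
\]
Taking imaginary parts and using $\Im(iz) = \Re(z)$, this yields
\[
    \Im \int_{\Gamma_H} Tw\,\overline{w}\,dS = 2\pi \sum_{n\in\mathbb{Z}} \Re(\beta_n)\,|w_n|^2 .
\]

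To finish, I would inspect the definition (\ref{beta}) of $\beta_n$. Since $\varepsilon^+$ is real and positive, for the finitely many propagating modes ($\alpha_n^2 < k^2\varepsilon^+$) we have $\beta_n = \sqrt{k^2\varepsilon^+ - \alpha_n^2} > 0$ real, whereas for the evanescent modes ($\alpha_n^2 > k^2\varepsilon^+$) we have $\beta_n = i\sqrt{\alpha_n^2 - k^2\varepsilon^+}$ purely imaginary, so $\Re(\beta_n) = 0$. Every term in the sum is therefore nonnegative, which gives the claim.

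The main obstacle is not the algebra but the rigor of the duality pairing and the interchange of summation: since $w|_{\Gamma_H}$ only lies in $H^{1/2}_{\alpha_0}(\Gamma_H)$ and $Tw$ in $H^{-1/2}_{\alpha_0}(\Gamma_H)$, the ``integral'' must be read as the $H^{-1/2}$--$H^{1/2}$ bracket, and splitting it into a series requires justifying convergence in the relevant Sobolev norms. This follows from the growth estimate $|\beta_n| = O(|n|)$ together with the weights $(1+\alpha_n^2)^{\pm 1/2}$ built into the definition of these spaces (as in the continuity Lemma for $T$ proved earlier); once this is in place, the diagonalization and the nonnegativity of $\Re(\beta_n)$ are the easy core of the argument.
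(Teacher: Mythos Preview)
Your proposal is correct and follows essentially the same approach as the paper: expand $w|_{\Gamma_H}$ and $Tw$ in the quasi-periodic Fourier basis, use orthogonality to collapse the pairing to $2\pi i\sum_n \beta_n|w_n|^2$, and conclude from the fact that $\Re(\beta_n)\geq 0$ for every $n$. If anything, your discussion of the trace theorem and the $H^{-1/2}$--$H^{1/2}$ duality is more careful than the paper's own proof, which treats the expansion and term-by-term manipulation as formal.
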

\begin{proof}
    We recall that on $\Gamma_H$ we can write
    \begin{equation*}
        w = \sum_{n \in \mathbb{Z}} w_n e^{i \alpha_n x_1}, \hspace{0.7cm} Tw = i \sum_{n\in\mathbb{Z}} w_n \beta_n e^{i\alpha_n x_1},
    \end{equation*}
    so we have
    \begin{align*}
         \int_{\Gamma_H} Tw \hspace{0.01cm} \overline{w} \hspace{0.1cm} dS & = \int_{0}^{2\pi} i \sum_{n\in\mathbb{Z}} w_n \beta_n e^{i\alpha_nx_1} \overline{\left( \sum_{m \in \mathbb{Z}} w_m e^{i \alpha_m x_1} \right)} \hspace{0.1cm} dx_1 \\
         & = \sum_{n\in\mathbb{Z}} \sum_{m \in \mathbb{Z}} i w_n \beta_n \hspace{0.05cm} \overline{w_m} \int_{0}^{2\pi} e^{i(\alpha_n-\alpha_m) x_1} \hspace{0.1cm} dx_1 \\
         & = \sum_{n\in\mathbb{Z}} i |w_n|^2 \beta_n \hspace{0.05cm} 2\pi.
    \end{align*}
    Recalling that
    \begin{equation*}
        \beta_n = \begin{cases} \sqrt{k^2\varepsilon^+-\alpha_n^2} & \alpha_n^2 < k^2\varepsilon^+, \\ i\sqrt{\alpha_n^2-k^2\varepsilon^+} & \alpha_n^2 > k^2\varepsilon^+, \end{cases} 
    \end{equation*}
    we get
    \begin{equation*}
         \Im \int_{\Gamma_H} Tw \hspace{0.01cm} \overline{w} \hspace{0.1cm} dS = \sum_{\alpha^2_n < k^2\varepsilon^+} 2\pi \hspace{0.05cm} |w_n|^2 \sqrt{k^2\varepsilon^+-\alpha_n^2} \geq 0.
    \end{equation*}
\end{proof}
\begin{rem} \label{imag_rem}
    We proved this result for the full DtN operator $T$, while in the discrete problem we make use of the truncated operator $T_M$. Since the expression of $\Im \int_{\Gamma_H} Tw \hspace{0.01cm} \overline{w} \hspace{0.1cm} dS$ is a finite sum, by choosing $M$ sufficiently large, in particular $M$ such that $\alpha^2_M > k^2\varepsilon^+$, and assumed that $\Re (\varepsilon^+) >0$ and $\Im (\varepsilon^+)=0$, we can guarantee that $\Im \int_{\Gamma_H} Tw \hspace{0.01cm} \overline{w} \hspace{0.1cm} dS= \Im \int_{\Gamma_H} T_M w \hspace{0.01cm} \overline{w} \hspace{0.1cm} dS$. \\
\end{rem}

For the error analysis, it is useful to write the sesquilinear form (\ref{eqn:bilinear}) as
\begin{align} \label{eqn:bilinear_new}
    & A^M_h(u, v) = \int_\Omega \left( \nabla_h u \cdot \nabla_h \overline{v} -\kappa^2 u \overline{v} \right) \hspace{0.05cm} d\mathbf{x}  \\ \nonumber
    & \int_{F_h^I} \left( -i\xi \hspace{0.05cm}\alpha \hspace{0.05cm} \llbracket u \rrbracket_N \cdot \overline{\llbracket v \rrbracket}_N - \llbracket u \rrbracket_N \cdot \overline{\dlgraffa \nabla_h v \drgraffa} -\beta \hspace{0.05cm} i\xi^{-1} \hspace{0.05cm} \llbracket \nabla_h u \rrbracket_N \hspace{0.05cm} \overline{\llbracket \nabla_h v \rrbracket}_N - \dlgraffa \nabla_h u \drgraffa \cdot \overline{\llbracket v \rrbracket}_N \right) dS \\ \nonumber
    & + \int_{\Gamma_H} \biggl( - T_M u \hspace{0.05cm} \overline{v} -\delta \hspace{0.05cm} i\kappa^{-1} \hspace{0.05cm} \left(\nabla_h u \cdot \mathbf{n} -T_M u\right) \hspace{0.05cm} \overline{\left(\nabla_h v \cdot \mathbf{n} -T_M v\right)} \biggl) dS \\
    & + \int_{\Gamma_D} \left( -i\kappa \hspace{0.05cm}\alpha \hspace{0.05cm} u\hspace{0.05cm} \overline{v} + u \overline{\nabla_h v \cdot \mathbf{n}}  -\nabla_h u \cdot \mathbf{n} \hspace{0.05cm} \overline{v} \right) dS, \nonumber
\end{align}
and
\begin{align} \label{eqn:bilinear_exact}
    & A_h(u, v) = \int_\Omega \left( \nabla_h u \cdot \nabla_h \overline{v} -\kappa^2 u \overline{v} \right) \hspace{0.05cm} d\mathbf{x}  \\ \nonumber
    & \int_{F_h^I} \left( -i\xi \hspace{0.05cm}\alpha \hspace{0.05cm} \llbracket u \rrbracket_N \cdot \overline{\llbracket v \rrbracket}_N - \llbracket u \rrbracket_N \cdot \overline{\dlgraffa \nabla_h v \drgraffa} -\beta \hspace{0.05cm} i\xi^{-1} \hspace{0.05cm} \llbracket \nabla_h u \rrbracket_N \hspace{0.05cm} \overline{\llbracket \nabla_h v \rrbracket}_N - \dlgraffa \nabla_h u \drgraffa \cdot \overline{\llbracket v \rrbracket}_N \right) dS \\ \nonumber
    & + \int_{\Gamma_H} \biggl( - T u \hspace{0.05cm} \overline{v} -\delta \hspace{0.05cm} i\kappa^{-1} \hspace{0.05cm} \left(\nabla_h u \cdot \mathbf{n} -T u\right) \hspace{0.05cm} \overline{\left(\nabla_h v \cdot \mathbf{n} -T v\right)} \biggl) dS \\
    & + \int_{\Gamma_D} \left( -i\kappa \hspace{0.05cm}\alpha \hspace{0.05cm} u\hspace{0.05cm} \overline{v} + u \overline{\nabla_h v \cdot \mathbf{n}}  -\nabla_h u \cdot \mathbf{n} \hspace{0.05cm} \overline{v} \right) dS. \nonumber
\end{align}

We now define two mesh-dependent seminorms on the Trefftz space $T(\bigtau_h)$; we refer to (\ref{broken_sob}) for the definition of the space.
\begin{align} \nonumber
    \vertiii{w}_{\text{TDG}_T}^2 & := \left\| \xi^{-\frac{1}{2}} \beta^{\frac{1}{2}} \llbracket \nabla_h w \rrbracket_N \right\|^2_{L^2(F_h^I)} + \left\| \xi^{\frac{1}{2}} \alpha^{\frac{1}{2}} \llbracket w \rrbracket_N \right\|^2_{L^2(F_h^I)} \\
    & + \left\| \kappa^{\frac{1}{2}} \alpha^{\frac{1}{2}} w \right\|^2_{L^2(\Gamma_D)} + \left\| \kappa^{-\frac{1}{2}} \delta^{\frac{1}{2}} ( \nabla_h w \cdot \mathbf{n} - T w ) \right\|^2_{L^2(\Gamma_H)},
\end{align}
and
\begin{align} \nonumber
    \vertiii{w}_{\text{TDG}_T^+}^2 & := \vertiii{w}_{\text{TDG}_T}^2 + \left\| \xi^{\frac{1}{2}} \beta^{-\frac{1}{2}} \dlgraffa w \drgraffa \right\|^2_{L^2(F_h^I)} +\left\| \xi^{-\frac{1}{2}} \alpha^{-\frac{1}{2}} \dlgraffa \nabla_h w \drgraffa \right\|^2_{L^2(F_h^I)} \\
    & + \left\| \kappa^{-\frac{1}{2}} \alpha^{-\frac{1}{2}} \nabla_h w \cdot \mathbf{n} \right\|^2_{L^2(\Gamma_D)} +  \left\| \kappa^{\frac{1}{2}} \delta^{-\frac{1}{2}} w \right\|^2_{L^2(\Gamma_H)}.
\end{align}

\begin{prop} \label{prop:norma}
    The seminorm $\vertiii{\cdot}_{\text{TDG}_T}$ is actually a norm on $T(\bigtau_h)$ and, provided $M$ is sufficiently large and the non-trapping conditions (\ref{notrap}) on $\varepsilon$ are satisfied, we have
    \begin{equation} \label{result}
        -\Im A^M_h(w, w) \geq \vertiii{w}_{\text{TDG}_T}^2.
    \end{equation}
    Moreover,
    \begin{equation}
        A^M_h(v, w) \leq 2 \vertiii{v}_{\text{TDG}_T^+} \hspace{0.1cm} \vertiii{w}_{\text{TDG}_T}.
    \end{equation}
\end{prop}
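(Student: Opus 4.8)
The plan is to establish the three assertions in the same spirit as Propositions \ref{prop:norm}, \ref{prop:imag} and the continuity estimate of the impedance case, adapting each step to the nonlocal boundary term on $\Gamma_H$. For the norm property I would take $w \in T(\bigtau_h)$ with $\vertiii{w}_{\text{TDG}_T} = 0$. The vanishing of the four summands forces $\llbracket w \rrbracket_N = 0$ and $\llbracket \nabla_h w \rrbracket_N = 0$ on $F_h^I$, $w = 0$ on $\Gamma_D$, and $\nabla_h w \cdot \mathbf{n} - T w = 0$ on $\Gamma_H$. The two jump conditions promote $w$ to a global $H^1_{\alpha_0}(\Omega)$ function with continuous normal derivative across interfaces, and since $w$ is elementwise a Helmholtz solution it solves $\Delta w + \kappa^2 w = 0$ in all of $\Omega$. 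Thus $w$ solves the homogeneous version of (\ref{nontrunc}) (with $u^i = 0$), and the non-trapping uniqueness assumption (\ref{notrap}) yields $w = 0$; hence $\vertiii{\cdot}_{\text{TDG}_T}$ is a norm.

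For the coercivity-type bound (\ref{result}) I would start from the volume representation (\ref{eqn:bilinear_new}), set $u = v = w$, and take $-\Im[\cdot]$. The volume term $\int_\Omega (|\nabla_h w|^2 - \kappa^2 |w|^2)$ is real when $\kappa$ is real and otherwise contributes a non-negative multiple of $\|w\|_{L^2(\Omega)}^2$ under the absorption sign convention, so it may only help the inequality. The two interior penalty terms $-i\xi\alpha\,|\llbracket w \rrbracket_N|^2$ and $-i\xi^{-1}\beta\,|\llbracket \nabla_h w \rrbracket_N|^2$ reproduce, after $-\Im$, exactly $\|\xi^{\frac{1}{2}}\alpha^{\frac{1}{2}}\llbracket w \rrbracket_N\|^2_{L^2(F_h^I)}$ and $\|\xi^{-\frac{1}{2}}\beta^{\frac{1}{2}}\llbracket \nabla_h w \rrbracket_N\|^2_{L^2(F_h^I)}$, while the interior and Dirichlet cross terms combine into real quantities (as in Proposition \ref{prop:imag}) and drop out. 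The Dirichlet mass term $-i\kappa\alpha|w|^2$ supplies $\|\kappa^{\frac{1}{2}}\alpha^{\frac{1}{2}}w\|^2_{L^2(\Gamma_D)}$. On $\Gamma_H$ the $\delta$-term reproduces $\|\kappa^{-\frac{1}{2}}\delta^{\frac{1}{2}}(\nabla_h w \cdot \mathbf{n} - T_M w)\|^2_{L^2(\Gamma_H)}$, and the term $-T_M w \, \overline{w}$ contributes $\Im \int_{\Gamma_H} T_M w\, \overline{w}$, which is non-negative by Lemma \ref{lemma:l2prod} and Remark \ref{imag_rem}. Discarding this last non-negative quantity produces the inequality rather than an equality; invoking Remark \ref{imag_rem} for $M$ large enough lets me identify the $\Gamma_H$ contribution with the $T$-based seminorm term, giving (\ref{result}).

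The continuity bound is where the real work lies. I would expand $A^M_h(v,w)$ using (\ref{eqn:bilinear}) and apply a weighted Cauchy--Schwarz inequality to each term, splitting the flux weights $\alpha, \beta, \delta$ and the factors $\xi, \kappa$ so that every factor carrying $v$ is dominated by $\vertiii{v}_{\text{TDG}_T^+}$ and every factor carrying $w$ by $\vertiii{w}_{\text{TDG}_T}$; summing and applying a discrete Cauchy--Schwarz over the terms yields the constant $2$. The interior-face and Dirichlet contributions are routine in this scheme. The main obstacle is the nonlocal coupling $\int_{\Gamma_H} \big( v\, \overline{\nabla_h w \cdot \mathbf{n}} - T_M v\, \overline{w} \big)\,dS$, for which neither factor matches a seminorm weight directly. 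My plan is to write $\nabla_h w \cdot \mathbf{n} = (\nabla_h w \cdot \mathbf{n} - T w) + T w$, so that $v\,\overline{(\nabla_h w \cdot \mathbf{n} - Tw)}$ splits cleanly into the $\|\kappa^{\frac{1}{2}}\delta^{-\frac{1}{2}} v\|_{L^2(\Gamma_H)}$ factor of $\vertiii{v}_{\text{TDG}_T^+}$ times the $\Gamma_H$ factor of $\vertiii{w}_{\text{TDG}_T}$, and then to treat the residue $\int_{\Gamma_H}(v\,\overline{Tw} - T_M v\, \overline{w})$ by means of the $L^2(\Gamma_H)$-adjoint $T^*_M$. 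The residue reduces to $\int_{\Gamma_H} v\, \overline{(T - T^*)w}$, and since $T - T^*$ annihilates all evanescent modes and acts only on the finitely many propagating modes (where $\beta_n$ is real), it is a bounded finite-rank operator; controlling this piece within the stated norms, uniformly in $h$, $p$, and $k$, is the delicate point, and it is exactly here that the choice $M$ large together with $\Re(\varepsilon^+) > 0$, $\Im(\varepsilon^+) = 0$ from Remark \ref{imag_rem} is used to pass freely between $T_M$ and $T$.
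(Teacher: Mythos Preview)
Your treatment of the norm property and of the coercivity inequality (\ref{result}) follows the paper's proof essentially line by line: both arguments compute $-\Im A_h(w,w)$ from the rewritten form (\ref{eqn:bilinear_exact}), invoke Lemma~\ref{lemma:l2prod} for the sign of $\Im\int_{\Gamma_H} Tw\,\overline{w}\,dS$, pass to $A_h^M$ via Remark~\ref{imag_rem}, and appeal to uniqueness for the homogeneous problem (\ref{nontrunc}) to conclude $w=0$. Your side remark that the volume contribution can only help when $\kappa$ is complex is a correct refinement the paper leaves implicit.

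For the continuity bound the paper's entire argument is the single line ``apply the Cauchy--Schwarz inequality repeatedly to (\ref{eqn:bilinear_new})''. You go further and correctly isolate the nonlocal piece $\int_{\Gamma_H}\bigl(v\,\overline{\nabla_h w\cdot\mathbf{n}}-T_M v\,\overline{w}\bigr)\,dS$ as the term that does not yield to a direct weighted Cauchy--Schwarz, since $\vertiii{w}_{\text{TDG}_T}$ carries no control of $\|w\|_{L^2(\Gamma_H)}$. Your plan to add and subtract $Tw$ is the natural one, but the reduction of the residue to $\int_{\Gamma_H} v\,\overline{(T-T^*)w}\,dS$ is not quite right: what actually appears is $\int_{\Gamma_H} v\,\overline{(T-T_M^*)w}\,dS$, and $T_M^*\neq T^*$ on the evanescent modes with $|n|>M$, so a high-frequency tail remains in addition to the finite-rank propagating part you describe. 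Bounding either piece by $\vertiii{v}_{\text{TDG}_T^+}\vertiii{w}_{\text{TDG}_T}$ with a constant independent of the discretisation is exactly the gap you yourself flag as ``the delicate point''; the paper does not address it either, so on this step your argument is more honest than, but no more complete than, the paper's.
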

\begin{proof}
    From the expression of (\ref{eqn:bilinear_exact}), we have
    \begin{align*}
        -\Im A_h(w, w) & = \left\| \xi^{-\frac{1}{2}} \beta^{\frac{1}{2}} \llbracket \nabla_h w \rrbracket_N \right\|^2_{L^2(F_h^I)} + \left\| \xi^{\frac{1}{2}}\alpha^{\frac{1}{2}} \llbracket w \rrbracket_N \right\|^2_{L^2(F_h^I)} +\left\| \kappa^{\frac{1}{2}} \alpha^{\frac{1}{2}} w \right\|^2_{L^2(\Gamma_D)}\\
        & + \Im \int_{\Gamma_H} T w \hspace{0.05cm} \overline{w} \hspace{0.1cm} dS + \left\| \kappa^{-\frac{1}{2}} \delta^{\frac{1}{2}} ( \nabla_h w \cdot \mathbf{n} - T w ) \right\|^2_{L^2(\Gamma_H)}\\
        & \geq \vertiii{w}^2_{\text{TDG}_T},
    \end{align*}
    since $\Im \int_{\Gamma_H} T w \hspace{0.05cm} \overline{w} \hspace{0.1cm} dS \geq 0$ from Lemma \ref{lemma:l2prod}. Moreover, if $\Im A_h(w, w)= 0$, then $w \in H^2(\Omega)$ satisfies the Helmholtz equation in $\Omega$, with $w = 0$ on $\Gamma_D$ and $\nabla_h w \cdot \mathbf{n} - T w = 0$ on $\Gamma_H$. This problem has only the trivial solution $w = 0$ since the non-trapping conditions on $\varepsilon$ are satisfied. This proves that $\vertiii{\cdot}_{\text{TDG}_T}$ is a norm.

    From Remark \ref{imag_rem} we get that if $M$ is sufficiently large,
    \begin{equation*}
        \Im \int_{\Gamma_H} Tw \hspace{0.01cm} \overline{w} \hspace{0.1cm} dS= \Im \int_{\Gamma_H} T_M w \hspace{0.01cm} \overline{w} \hspace{0.1cm} dS,
    \end{equation*}
    and so
    \begin{equation*}
        \Im A_h^M(w, w) = \Im A_h(w, w),
    \end{equation*}
    which proves (\ref{result}).
    
    To prove the last part of the Proposition, we apply the Cauchy-Schwarz inequality repeatedly to (\ref{eqn:bilinear_new}).
\end{proof}
\begin{prop}
    Provided $M$ is large enough, the discrete problem (\ref{eqn:matrix}) has a unique solution $u_h \in V_p(\bigtau_h)$.
\end{prop}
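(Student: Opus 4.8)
The plan is to exploit the fact that the discrete problem (\ref{eqn:matrix}) is equivalent to a square linear system $A\mathbf{u} = \mathbf{L}$ of dimension $N = pT$, where $T$ is the number of elements of $\bigtau_h$. For a square system, existence of a solution for every right-hand side is equivalent to uniqueness, and both are in turn equivalent to the injectivity of the matrix $A$. Hence it suffices to show that the only $u_h \in V_p(\bigtau_h)$ satisfying the homogeneous equation $A^M_h(u_h, v_h) = 0$ for all $v_h \in V_p(\bigtau_h)$ is $u_h = 0$.

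To establish this injectivity I would test the homogeneous equation with $v_h = u_h$, obtaining $A^M_h(u_h, u_h) = 0$ and therefore $\Im A^M_h(u_h, u_h) = 0$. Provided $M$ is large enough, Proposition \ref{prop:norma} then yields the coercivity-type bound
\[
0 = -\Im A^M_h(u_h, u_h) \geq \vertiii{u_h}_{\text{TDG}_T}^2 \geq 0,
\]
so that $\vertiii{u_h}_{\text{TDG}_T} = 0$. Since every plane wave solves the homogeneous Helmholtz equation elementwise, we have $V_p(\bigtau_h) \subseteq T(\bigtau_h)$, and Proposition \ref{prop:norma} also guarantees that $\vertiii{\cdot}_{\text{TDG}_T}$ is a genuine norm on $T(\bigtau_h)$, hence on the subspace $V_p(\bigtau_h)$. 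Consequently $\vertiii{u_h}_{\text{TDG}_T} = 0$ forces $u_h = 0$, which proves the injectivity of $A$ and therefore the existence of a unique solution.

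The crux of the argument is already packaged inside Proposition \ref{prop:norma}: the hypothesis that $M$ be large enough enters precisely to guarantee, via Remark \ref{imag_rem}, that $\Im A^M_h(w,w) = \Im A_h(w,w)$, so that the imaginary-part estimate and the norm property transfer from the exact form $A_h$ to the truncated form $A^M_h$. The only genuinely new points to record are the passage from the variational statement to the matrix form and the elementary linear-algebra fact that, for a square matrix, injectivity is equivalent to bijectivity. I expect this last step to be the conceptually important one to flag: it is automatic in finite dimensions, but it is exactly what fails in the infinite-dimensional continuous setting, where uniqueness had to be recovered through a Fredholm alternative (as in the variational analysis of Section \ref{regularity}) rather than directly from coercivity of the imaginary part.
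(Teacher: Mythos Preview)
Your proof is correct and follows essentially the same argument as the paper: test the homogeneous problem with $v_h=u_h$, use the coercivity estimate from Proposition~\ref{prop:norma} to conclude $\vertiii{u_h}_{\text{TDG}_T}=0$, and invoke that $\vertiii{\cdot}_{\text{TDG}_T}$ is a norm on $T(\bigtau_h)\supseteq V_p(\bigtau_h)$ to force $u_h=0$. Your additional remarks on the square-system reduction and on where the hypothesis on $M$ enters are accurate and make the logic more explicit than the paper's terse version.
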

\begin{proof}
    Assume $A_h^M(u_h,v)=0$ for all $v\in V_p(\bigtau_h)$. Then we have in particular $A_h^M(u_h,u_h)=0$, and so $\Im \hspace{0.05cm} A_h^M(u_h,u_h)=0$. This gives us $\vertiii{u_h}_{\text{TDG}_T}=0$, which implies $u_h=0$, since $\vertiii{\cdot}_{\text{TDG}_T}$ is a norm on the space $V_p(\bigtau_h)$.
\end{proof}

\subsection{A mesh-dependent quasi-optimality estimate}
We now present a quasi-optimality estimate in the mesh-dependent DG norm.
\begin{prop}
    Assume $M$ is sufficiently large and the non trapping conditions (\ref{notrap}) are satisfied. Let $u^M$ be the unique solution of the truncated boundary value problem (\ref{truncated}), and $u_h \in V_p(\bigtau_h)$ the unique solution of the discrete problem (\ref{eqn:matrix}). Then
    \begin{equation} \label{quasiopt}
        \vertiii{u^M - u_h}_{\text{TDG}_T} \leq 2 \inf_{v_h \in V_p(\bigtau_h)} \vertiii{u^M - v_h}_{\text{TDG}_T^+}.
    \end{equation}
\end{prop}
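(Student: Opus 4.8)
The plan is to run the classical Céa-type quasi-optimality argument for Galerkin schemes, using the two inequalities of Proposition \ref{prop:norma} (coercivity of $-\Im A^M_h$ and continuity of $A^M_h$) together with Galerkin orthogonality. First I would fix an arbitrary $v_h \in V_p(\bigtau_h)$ and introduce the discrete difference $\xi := v_h - u_h \in V_p(\bigtau_h) \subset T(\bigtau_h)$, so that the triangle inequality reduces the task to estimating $\xi$:
\[
    \vertiii{u^M - u_h}_{\text{TDG}_T} \le \vertiii{u^M - v_h}_{\text{TDG}_T} + \vertiii{\xi}_{\text{TDG}_T}.
\]

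Since $\xi$ lies in the Trefftz space, the coercivity bound \eqref{result} gives $\vertiii{\xi}_{\text{TDG}_T}^2 \le -\Im A^M_h(\xi,\xi) \le |A^M_h(\xi,\xi)|$. The core of the argument is Galerkin orthogonality: the exact truncated solution $u^M$ and the discrete solution $u_h$ satisfy the same variational identity against every test function in $V_p(\bigtau_h)$, hence $A^M_h(u^M - u_h, w_h) = 0$ for all $w_h \in V_p(\bigtau_h)$. Writing $\xi = (v_h - u^M) + (u^M - u_h)$ and choosing $w_h = \xi$, orthogonality removes the discrete part and leaves
\[
    \vertiii{\xi}_{\text{TDG}_T}^2 \le |A^M_h(u^M - v_h,\, \xi)|.
\]
Applying the continuity estimate of Proposition \ref{prop:norma} to the right-hand side bounds it by the product of $\vertiii{u^M - v_h}_{\text{TDG}_T^+}$ and $\vertiii{\xi}_{\text{TDG}_T}$; cancelling one factor of $\vertiii{\xi}_{\text{TDG}_T}$ (the case $\xi = 0$ being trivial) controls $\vertiii{\xi}_{\text{TDG}_T}$ by $\vertiii{u^M - v_h}_{\text{TDG}_T^+}$.

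Substituting this back into the triangle inequality and using $\vertiii{\cdot}_{\text{TDG}_T} \le \vertiii{\cdot}_{\text{TDG}_T^+}$, which holds because the plus-norm adds only nonnegative terms, bounds $\vertiii{u^M - u_h}_{\text{TDG}_T}$ by a multiple of $\vertiii{u^M - v_h}_{\text{TDG}_T^+}$; taking the infimum over $v_h \in V_p(\bigtau_h)$ yields \eqref{quasiopt}. The main obstacle is not the manipulation above but securing its two structural hypotheses. The first is Galerkin orthogonality, which presupposes that $u^M$ is consistent with the scheme, i.e.\ $A^M_h(u^M, v_h) = L^M_h(v_h)$; this is the truncated analogue of Proposition \ref{prop:consistent} and follows identically once $u^M \in H^2_{\alpha_0}(\Omega)$ and the truncated boundary identity $\nabla_h u^M \cdot \mathbf{n} - T_M u^M = 2i\beta_0 u^i$ on $\Gamma_H$ are invoked, the required regularity being exactly what the non-trapping conditions \eqref{notrap} provide. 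The second is the validity of the coercivity \eqref{result}, which rests on $\Im \int_{\Gamma_H} T_M w\, \overline{w}\, ds \ge 0$; by Remark \ref{imag_rem} this coincides with $\Im \int_{\Gamma_H} T w\, \overline{w}\, ds$ and is nonnegative only once $M$ is taken large enough. Both hold under the standing assumptions, so the estimate follows.
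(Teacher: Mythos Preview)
Your argument is correct but yields the constant $3$, not the $2$ stated in \eqref{quasiopt}: after the triangle inequality you obtain $\vertiii{\xi}_{\text{TDG}_T} \le 2\,\vertiii{u^M - v_h}_{\text{TDG}_T^+}$, and the remaining term $\vertiii{u^M - v_h}_{\text{TDG}_T} \le \vertiii{u^M - v_h}_{\text{TDG}_T^+}$ contributes a third copy.

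The paper avoids the triangle inequality altogether. Since $u^M$ solves the Helmholtz equation on every element, the error $u^M - u_h$ itself lies in $T(\bigtau_h)$, so the coercivity bound \eqref{result} can be applied to it directly:
\[
\vertiii{u^M - u_h}_{\text{TDG}_T}^2 \le -\Im A_h^M(u^M - u_h,\, u^M - u_h) \le \bigl|A_h^M(u^M - u_h,\, u^M - u_h)\bigr|.
\]
Galerkin orthogonality then replaces the second argument by $u^M - v_h$, and the continuity estimate of Proposition~\ref{prop:norma} produces the factor $2\,\vertiii{u^M - v_h}_{\text{TDG}_T^+}$ at once, with one factor of $\vertiii{u^M - u_h}_{\text{TDG}_T}$ cancelling. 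Your remarks on consistency for the truncated problem and on the size of $M$ are correct and are exactly the hypotheses the paper invokes.
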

\begin{proof}
    Let $v_h \in V_p(\bigtau_h)$. By Proposition \ref{prop:consistent} and \ref{prop:norma} we have
    \begin{align*}
        \vertiii{u^M - u_h}_{\text{TDG}_T}^2 & \leq -\Im \hspace{0.05cm} A_h^M (u^M - u_h, u^M - u_h) \\
        & \leq | A_h^M (u^M - u_h, u^M - u_h) | \\
        & = | A_h^M (u^M - u_h, u^M - v_h) | \\
        & \leq 2 \vertiii{u^M - u_h}_{\text{TDG}_T} \hspace{0.05cm} \vertiii{u^M - v_h}_{\text{TDG}_T^+}.
    \end{align*}
\end{proof}

\begin{rem}
    The right hand side error of (\ref{quasiopt}) can be controlled with plane waves approximation estimates as in \cite{ZAMP}.
\end{rem}

\newpage
\section{Convergence Experiments}
\subsection{Numerical tests for the DtN-PWDG method}
\begin{figure}[h] 
    \centering
    \includegraphics[width=.85\textwidth]{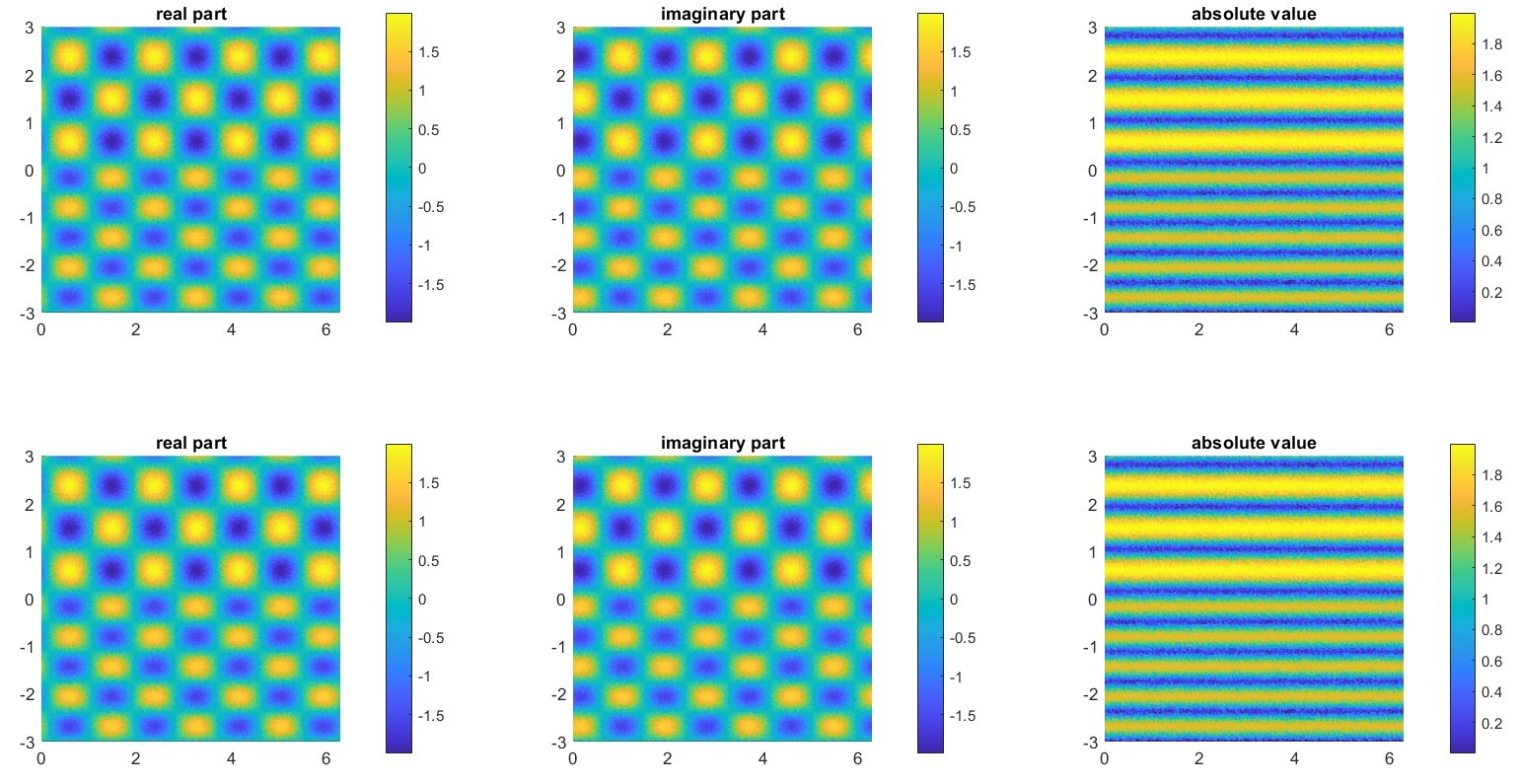}
    \caption{Plots of the numerical solution (top row) against the exact solution (bottom row)}
    \label{fig:DtN_PWDG_4}
\end{figure}
\begin{figure}[h] 
    \centering
    \includegraphics[width=.65\textwidth]{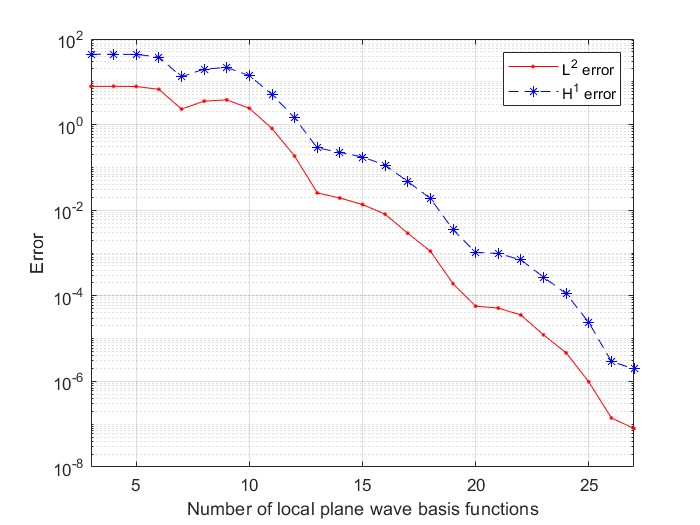}
    \caption{The errors in $L^2$ and $H^1$ norm for $h=1.5$ and $p \in \{3,...,27\}$}
    \label{fig:DtN_PWDG_err_4}
\end{figure} \noindent
To test our method, we consider the usual domain $\Omega=[0,2\pi]\times (-H,H)$, with $H=3$, divided in two regions with different relative permittivity $\varepsilon_1, \varepsilon_2$ by the straight line $x_2=0$. A plane wave is incident on the straight line with angle $\theta \in(-\pi, 0)$. As already seen the exact solution is given by (\ref{sol}). For all the experiments in this section, we consider $h=1.5$ as the mesh parameter, so that the mesh is composed of $56$ triangles.
\begin{example}
   We first consider the case with no absorption of Example \ref{Example_no_abs}: a plane wave is incident with an angle $\theta=- \pi / 4$ and the region $\Omega$ has relative permittivities $\varepsilon_1 =1$ and $\varepsilon_2=\frac{3}{2}$. In Figure \ref{fig:DtN_PWDG_4} and \ref{fig:DtN_PWDG_err_4} we display the plot of the numerical solution and the exact one and the $L^2$ and $H^1$ errors for increasing values of the number of local plane waves functions $p$.
\end{example}
\begin{example} \label{tezst}
   We consider a plane wave incident with an angle $\theta=- \pi / 3$ and a region with relative permittivities $\varepsilon_1 =1$ and $\varepsilon_2=(1.27+0.05i)^2$. This is the same domain of Example \ref{Esempio_cit}. In Figure \ref{fig:DtN_PWDG_3} and \ref{fig:DtN_PWDG_err_3} we display the plot of the numerical solution against the exact one and the $L^2$ and $H^1$ errors for increasing values of the number of local plane waves functions $p$.\\
   From the error plots, we observe an exponential convergence in the number of plane wave directions $p$.\\
\end{example}
\begin{figure}[] 
    \centering
    \includegraphics[width=.85\textwidth]{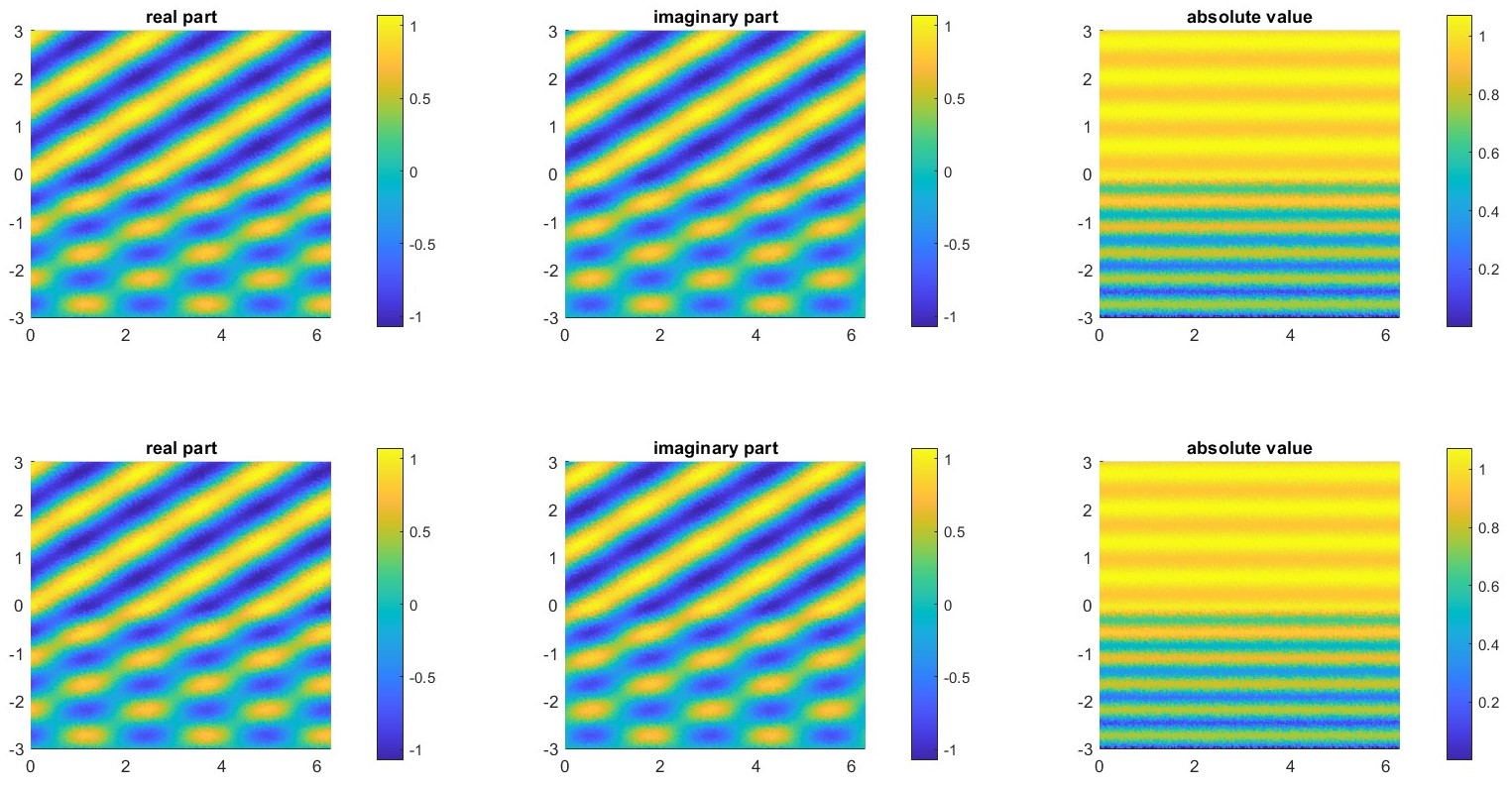}
    \caption{Plots of the numerical solution (top row) against the exact solution (bottom row)}
    \label{fig:DtN_PWDG_3}
\end{figure}
\begin{figure}[] 
    \centering
    \includegraphics[width=.65\textwidth]{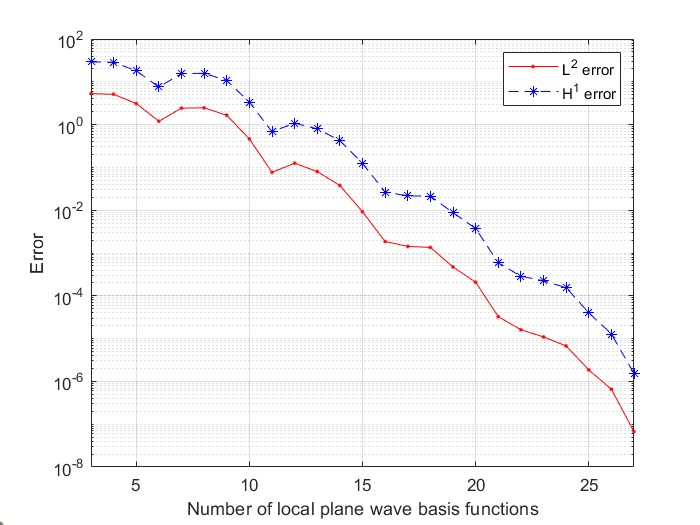}
    \caption{The errors in $L^2$ and $H^1$ norm for $h=1.5$ and $p \in \{3,...,27\}$}
    \label{fig:DtN_PWDG_err_3}
\end{figure}

\subsection{The $h$-version}
We consider the same problem of Example \ref{tezst} and consider the $h$-convergence instead of the $p$-convergence. In this test we fix the number of plane wave basis function inside any element as $p=3$ and the number of Fourier modes $M=100$ and we vary the mesh width $h$. 

In Figure \ref{fig:h_version} we display the results of the numerical experiments: as we can observe, the error decreases with a rate of $h^2$, which is the usual convergence rate for internal boundary value problems with plane waves methods and $p=3$, as in \cite{Gittelson}. This convergence rate is comparable to the one obtained with the P1 version of the DtN-FEM method.
\begin{figure}[] 
    \centering
    \includegraphics[width=.75\textwidth]{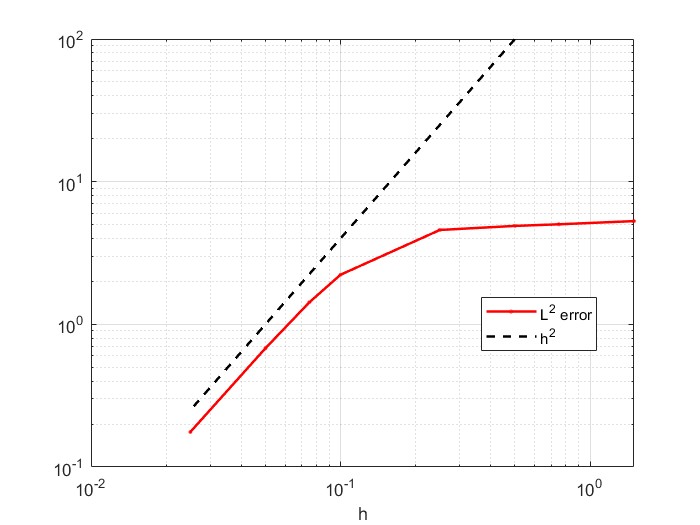}
    \caption{The errors in $L^2(\Omega)$ in the $h$-version}
    \label{fig:h_version}
\end{figure}

\subsection{Convergence in the number of Fourier modes $M$}
In this section, we fix the number of plane wave basis functions $p$ and the mesh width $h$ and vary the number of Fourier modes $M$ for the truncated Dirichlet-to-Neumann operator. We expect the error curve to flatten after a certain value of $M$. We consider the same problem of Example \ref{tezst}, so we have $\theta=-\pi/3$ and $\varepsilon_1 =1$, $\varepsilon_2=(1.27+0.05i)^2$. We choose $p=15$ and $h=1.5$ and vary $M$ from $5$ to $100$.

In Figure \ref{fig:M_conv} we display the results of the numerical experiments: we can observe that initially the error decreases, but after the value $M=50$ the curve flattens and we do not observe a substantial improvement. This is due the fact that for large values of $M$ the contribute given by the new elements of the Fourier expansion is very small, so the operator $T_M$ is converges to the full operator $T$. We also note that the error order remains the same for all the values of $M$, since the error in $M$ gets dominated by the TDG error. 
\begin{figure}[] 
    \centering
    \includegraphics[width=.75\textwidth]{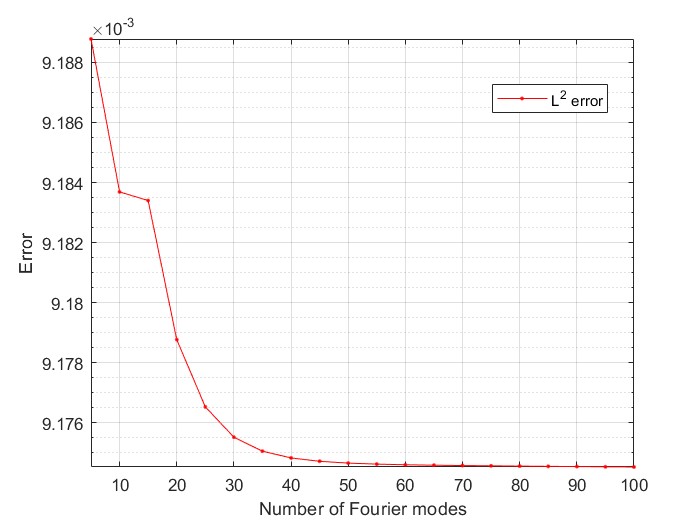}
    \caption{The errors in $L^2(\Omega)$ for different numbers of Fourier modes $M$}
    \label{fig:M_conv}
\end{figure}

\subsection{Comparison between the two PWDG methods}
\begin{figure}[] 
    \centering
    \includegraphics[width=.75\textwidth]{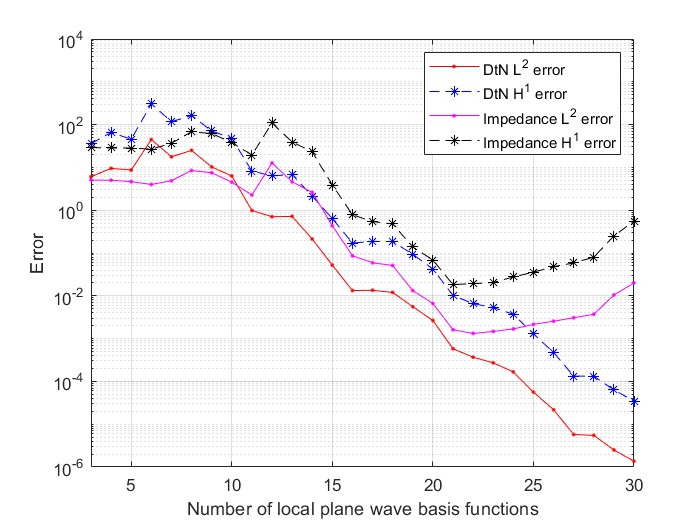}
    \caption{The errors in $L^2$ and $H^1$ norm of the two numerical methods for the different values of $p \in \{3,...,30\}$}
    \label{fig:Confronto}
\end{figure}
Here we compare the two PWDG methods we presented. We consider the usual domain $\Omega=[0,2\pi]\times (-H,H)$, with $H=3$, divided in two different regions with different relative permittivity $\varepsilon_1= 1$ and $\varepsilon_2=(1.8+0.15i)^2$ by the straight line $x_2=0$. A plane wave is incident on the straight line with angle $\theta =-\pi/4$. Note that we are not comparing two methods for the same boundary value problem, but we are comparing the same field $u$ as solution of two different BVPs.

In Figure \ref{fig:Confronto} we display the errors of the DtN-PWDG and Impedance-PWDG methods for increasing values of $p$ and fixed $h=1.5$. 

As we can observe, the DtN-PWDG method performs better than the Impedance-PWDG one and we do not observe the effect of round-off for high values of $p$.

\section{Numerical experiments with different obstacles}
In this section we perform numerical experiments where the usual domain $\Omega$ is divided in different ways. In this experiments we don't know the analytical solution but we can compare our results with others obtained with different numerical methods, such as DtN-FEM or least-squares.
\begin{figure}[] 
    \centering
    \includegraphics[width=.95\textwidth]{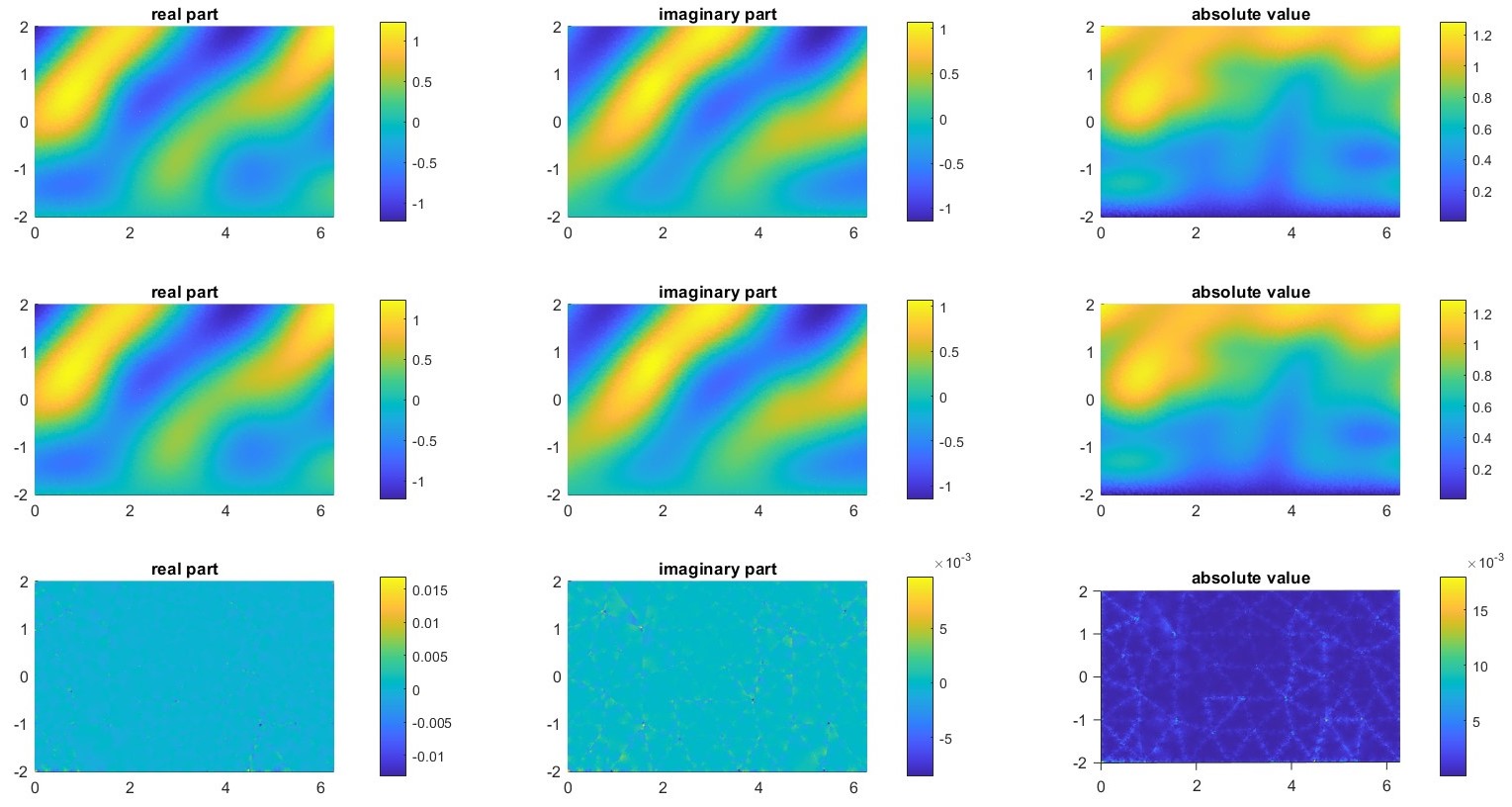}
    \caption{The numerical solution of Example \ref{ex:PWDG_quad} for $h=1.5$, $p=10$ (first row) and $p=20$ (second row) and the difference between the two numerical solutions (bottom row)}
    \label{fig:DtN_PWDG_quad}
\end{figure}
\begin{figure}[]
\centering
\subfloat[][\emph{Real part}]
{\includegraphics[width=.9\textwidth]{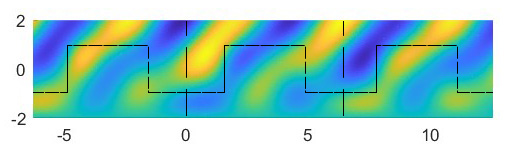}} \quad
\subfloat[][\emph{Imaginary part}]
{\includegraphics[width=.9\textwidth]{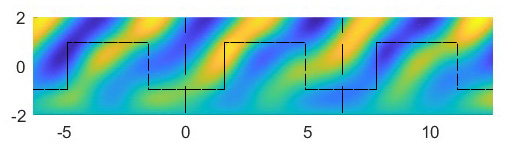}} \quad
\subfloat[][\emph{Absolute value}]
{\includegraphics[width=.9\textwidth]{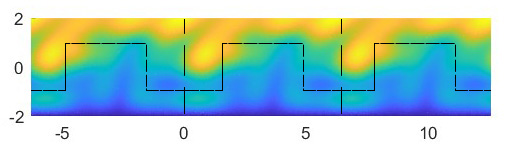}} \quad
\caption{Plots of the approximate solution of Example \ref{ex:PWDG_quad} on the extended domain $[-2\pi,4\pi]\times[-2,2]$. The scatterer profile is plotted as a dotted line}
\label{fig:plots_qp}
\end{figure}
\begin{example} \label{ex:PWDG_quad}
    We consider the domain $\Omega= [0,2\pi] \times [-2,2]$ displayed in Figure \ref{fig:plots_qp}; this is the same domain we used for Example \ref{ex:quad}, divided by a piecewise constant line in two regions with different relative permittivity $\varepsilon=1$ and $\varepsilon_2 = (1.27+0.25i)^2$. A plane wave is incident with angle $\theta= -\pi / 4$ and wavenumber $k=2$. We can compare our numerical solution with the one obtained in \cite{Bao} and the DtN-FEM solution displayed in Figure \ref{fig:quad_plot}. \\
    In Figure \ref{fig:DtN_PWDG_quad} we display the numerical solution for $h=1$ and $p=10$ and $20$. We also plot the difference between the two numerical solution and observe that the error is uniform inside the domain and is mostly distributed on the triangle interfaces, as we can clearly see from the plot of the absolute value.
    
    In Figure \ref{fig:plots_qp} we display the numerical solution obtained with the same parameters and $p=20$ on the extended domain $[-2\pi,4\pi]\times[-2,2]$. We solve the boundary problem on $\Omega= [0,2\pi] \times [-2,2]$ and then we extend the local solution to a global solution using the property of quasi-periodicity. As we can observe from the figure, the solution extends smoothly both on the left and the right boundary of $\Omega$. In particular, from the plot of the absolute value of the solution, we can see the profile of the scatterer.
\end{example} \noindent
\begin{figure}[h]
    \centering
    \includegraphics[width=.5\textwidth]{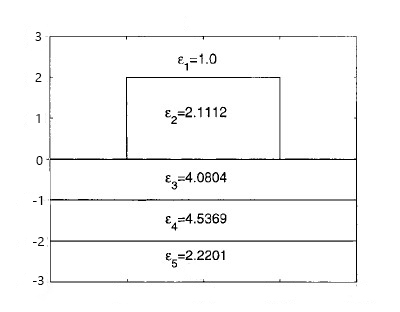}
    \caption{The domain of Example \ref{ex:bao}}
    \label{fig:domain_2}
\end{figure}
\begin{figure}[h]
    \centering
    \includegraphics[width=.99\textwidth]{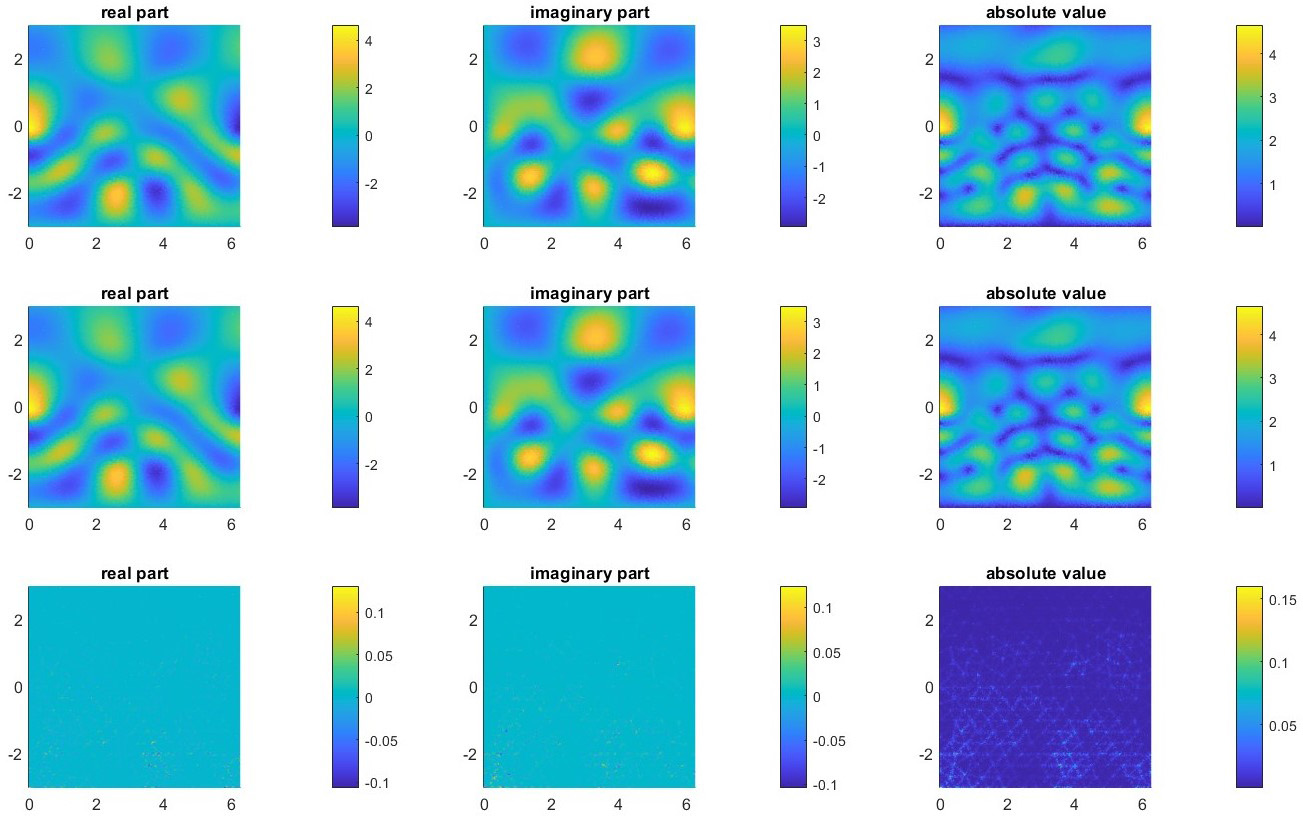}
    \caption{The numerical solution of Example \ref{ex:bao} for $h=0.5$, $p=10$ (first row) and $p=20$ (second row) and the difference between the two numerical solutions (bottom row)}
    \label{fig:bao_plot}
\end{figure}
\begin{example} \label{ex:bao}
    We consider $\Omega=[0,2\pi]\times[-3,3]$ as shown in Figure \ref{fig:domain_2}. This type of grating structure has applications particularly in optical filters and guided mode resonance devices and is also analyzed in \cite{Bao}. The parameters are given as follows: $\varepsilon_1 = 1$, $\varepsilon_2 = 1.49^2$, $\varepsilon_3 = 2.13^2$, $\varepsilon_4 = 2.02^2$, $\varepsilon_2 = 1.453^2$, $\theta=-\pi/4$, $k=2$. 
    
    Figure \ref{fig:bao_plot} shows the plots of the approximate solution for $h=0.5$ and $p=10$ and $20$. We also plot the difference between the two approximate solution and as before we can observe that the relative error is distributed uniformly in $\Omega$ and it is higher on the skeleton of the mesh.
\end{example}

\chapter{Conclusion}
We proved that Plane Wave Discontinuous Galerkin methods are well-suited for the problem of scattering by periodic structures. The DtN boundary condition and the quasi-periodicity can be adapted to the PWDG formulation and provide good numerical results. The convergence tests highlight a behaviour comparable to the one of usual plane wave based methods on bounded domains, which leads to similar estimates for the numerical solution. 

The DtN-PWDG formulation is continue and quasi-optimal for the boundary value problem and it could be used to simulate the grating problem on different domains,

Many extensions are possible, such as a more precise analytical study of the scattering problem, which could lead to explicit stability estimates, as well as a study of the dependence of the variational solution on the scatterer geometry. 

\printbibliography

\end{document}